\documentclass{amsart}
\usepackage{geometry}
\usepackage{amssymb,amsmath,latexsym,amsthm, verbatim}

\usepackage[colorlinks=true,urlcolor=blue, citecolor=red,linkcolor=blue,
linktocpage,pdfpagelabels, bookmarksnumbered,bookmarksopen]{hyperref}
\usepackage[hyperpageref]{backref}

\numberwithin{equation}{section}

\newtheorem{teo}{Theorem}[section]
\newtheorem*{main}{Main Theorem}
\newtheorem{lm}[teo]{Lemma}
\newtheorem{prop}[teo]{Proposition}
\newtheorem{coro}[teo]{Corollary}
\newtheorem*{corollario}{Corollary}
\newtheorem*{weyl}{Weyl criterion}
\theoremstyle{definition}
\newtheorem{defi}[teo]{Definition}
\newtheorem{oss}[teo]{Remark}
\newtheorem*{ack}{Acknowledgments}

\title{Essential spectrum for the $p-$Laplacian}

\author[Brasco]{Lorenzo Brasco}
\address[L.\ Brasco]{Dipartimento di Matematica e Informatica
	\newline\indent
	Universit\`a degli Studi di Ferrara
	\newline\indent
	Via Machiavelli 35, 44121 Ferrara, Italy}
\email{lorenzo.brasco@unife.it}

\author[Briani]{Luca Briani}
\address[L.\ Briani]{School of Computation, Information and Technology		\newline\indent
 	Technical University of Munich
 	\newline\indent
	Boltzmannstra\ss e 3, 85748 Garching bei M\"unchen, Germany}
\email{luca.briani@tum.de}

\author[Franzina]{Giovanni Franzina}
\address[G.\ Franzina]{Consiglio Nazionale delle Ricerche
\newline\indent
Via dei Taurini 19, 00185 Roma, Italy}
\email{giovanni.franzina@cnr.it}

\date{\today}

\subjclass[2010]{35P30, 47J10, 49R05}
\keywords{Nonlinear eigenvalue problems, $p-$Laplacian, essential spectrum, Persson's Theorem.}

\begin{document}

\begin{abstract}
We introduce a variational notion of essential spectrum for the Dirichlet $p-$Laplacian. We then extend the classical Persson Theorem to this nonlinear setting. This result provides a geometric characterization of the bottom of the essential spectrum, in terms of the sharp $L^p$ Poincar\'e constant ``at infinity''. We also show that in the case $p=2$ our construction of the essential spectrum is perfectly consistent with the classical theory. Finally, as an example, we compute the full spectrum of the Dirichlet $p-$Laplacian on a rectilinear strip: it is purely essential, with no embedded eigenvalues. 
The arguments of the proofs are elementary and new already for the linear case $p=2$.
\end{abstract}

\maketitle

\begin{center}
\begin{minipage}{10cm}
\small
\tableofcontents
\end{minipage}
\end{center}

\section{Introduction}

\subsection{A classic: Persson's gateway}
On an open set $\Omega\subseteq\mathbb{R}^N$, we consider the Dirichlet-Laplacian, defined trough the first variation of the associated quadratic form
\begin{equation}
\label{quadratic}
\varphi\mapsto \int_\Omega |\nabla\varphi|^2\,dx,\qquad \text{for}\ \varphi\in W^{1,2}_0(\Omega).
\end{equation}
The symbol $W^{1,2}_0(\Omega)$ is the classical notation for the closure of $C^\infty_0(\Omega)$ in the usual Sobolev space
\[
W^{1,2}(\Omega)=\Big\{\varphi\in L^2(\Omega)\, :\, \nabla \varphi\in L^2(\Omega;\mathbb{R}^N)\Big\},
\]
endowed with the norm
\[
\|\varphi\|_{W^{1,2}(\Omega)}=\left(\int_\Omega |\varphi|^2\,dx+\int_\Omega |\nabla\varphi|^2\,dx\right)^\frac{1}{2}.
\]
We define the sharp Poincar\'e constant
\[
\lambda(\Omega)=\inf_{\varphi\in C^\infty_0(\Omega)} \left\{\int_\Omega |\nabla \varphi|^2\,dx\,:\, \int_\Omega |\varphi|^2\,dx=1\right\}.
\]
A well-known result by Persson asserts that if we define the sharp Poincar\'e constant ``at infinity'' through
\[
\mathcal{E}(\Omega):=\sup_{R>0} \lambda(\Omega\setminus \overline{B_R}),\qquad \text{where}\ B_R=\{x\in\mathbb{R}^N\, :\, |x|<R\},
\]
then it holds
\[
\mathcal{E}(\Omega)=\inf \mathfrak{S}_{\rm ess}(\Omega),
\]
i.e. the constant $\mathcal{E}(\Omega)$ coincides with the infimum of the {\it essential spectrum} $\mathfrak{S}_{\rm ess}(\Omega)$ of the Dirichlet-Laplacian on $\Omega$, see \cite[Theorem 2.1]{Pe} or \cite[Theorem 14.11]{HS}. Persson's result provides a ``geometric'' characterization of the ``gateway'' to the essential spectrum of the Dirichlet-Laplacian on $\Omega$. It is part of his assertion that
\[
\mathcal{E}(\Omega)=+\infty \qquad \Longleftrightarrow\qquad \mathfrak{S}_{\rm ess}(\Omega)=\emptyset.
\]
When this happens, the spectrum of the Dirichlet-Laplacian on $\Omega$ is discrete, only made of eigenvalues with finite multiplicities accumulating to infinity. This in turn is equivalent to the compactness of the embedding
\[
W^{1,2}_0(\Omega)\hookrightarrow L^2(\Omega),
\]
see for example \cite[Theorem 10.1.5]{BS}. We refer the reader to \cite[Chapter 15]{Maz} for a characterization of open sets supporting such a compact embedding.
\vskip.2cm\noindent
To set the stage, let us recall what the essential spectrum is. For our purposes, its definition is not necessary: 
it is sufficient to rely on the following well-known equivalence result, sometimes called {\it Weyl criterion} (see for example \cite[Theorem 9.1.2]{BS}, \cite[Theorem 7.2]{HS} or \cite[Lemma 6.17]{Te}).
 \begin{weyl}
 $\lambda\in \mathfrak{S}_{\rm ess}(\Omega)$ if and only if there exists 
 $\{u_n\}_{n\in\mathbb{N}}\subseteq L^2(\Omega)$ such that:
\begin{enumerate}
\item[(W1)] $\|u_n\|_{L^2(\Omega)}=1$ for every $n\in\mathbb{N}$;
\vskip.2cm
\item[(W2)] $\{u_n\}_{n\in\mathbb{N}}$ weakly converges to $0$ in $L^2(\Omega)$, as $n$ goes to $\infty$;
\vskip.2cm
\item[(W3)] each $u_n$ belongs to the domain of the operator, i.e. $u_n\in W^{1,2}_0(\Omega)$ and $-\Delta u_n\in L^2(\Omega)$;
\vskip.2cm
\item[(W4)] we have
\[
\lim_{n\to\infty} \|-\Delta u_n-\lambda\,u_n\|_{L^2(\Omega)}=0.
\]
\end{enumerate}
Such a sequence is also called {\rm singular Weyl sequence at the level $\lambda$}. 
 \end{weyl}
 \begin{oss}
In the framework of Critical Point Theory, a sequence having properties (W1), (W2) and (W4) resembles very much a non-compact Palais-Smale sequence for the quadratic form \eqref{quadratic}, constrained to the set 
 \[
\Big\{u\in W^{1,2}_0(\Omega)\, :\, \|u\|_{L^2(\Omega)}=1\Big\},
\]
see for example \cite[Chapter II, Section 2]{St}. In such a context, the property (W4) is more naturally formulated as
\[
\lim_{n\to\infty} \|-\Delta u_n-\lambda\,u_n\|_{W^{-1,2}(\Omega)}=0,
\]
where $W^{-1,2}(\Omega)$ is the topological dual space of the form domain $W^{1,2}_0(\Omega)$. Note that $-\Delta u_n$ automatically belongs to the dual space $W^{-1,2}(\Omega)$, when $u_n\in W^{1,2}_0(\Omega)$. We will come back on this similarity in a moment.
\end{oss}
\subsection{Goal of the paper}
From the Calculus of Variations standpoint, the curious spectral investigator could be tempted to replace the quadratic form \eqref{quadratic}
by the more general functional (here $1<p<\infty$)
\begin{equation}
\label{functional}
\varphi\mapsto \int_\Omega |\nabla\varphi|^p\,dx,\qquad \text{for}\ \varphi\in W^{1,p}_0(\Omega).
\end{equation}
The first variation of this functional gives rise to a nonlinear differential operator, the $p-$Laplacian with homogeneous Dirichlet boundary conditions on $\partial\Omega$. This is the operator defined in weak form by
\[
\langle -\Delta_p u,\varphi\rangle:=\int_\Omega \langle |\nabla u|^{p-2}\,\nabla u,\nabla\varphi\rangle\,dx,\qquad \text{for every}\ \varphi\in C^\infty_0(\Omega).
\]
There is a well-known and well-studied {\it eigenvalue problem} associated to this operator. This goes as follows.
\begin{defi}
Let $1<p<\infty$ and let $\Omega\subseteq\mathbb{R}^N$ be an open set. We say that $\lambda\in\mathbb{R}$ is an {\it eigenvalue of the Dirichlet $p-$Laplacian on $\Omega$} if there exists $u\in W^{1,p}_0(\Omega)\setminus\{0\}$ such that
\[
-\Delta_p u=\lambda\,|u|^{p-2}\,u,\qquad \text{in}\ \Omega.
\]
This has to be intended in weak sense, i.e.
\[
\int_\Omega \langle |\nabla u|^{p-2}\,\nabla u,\nabla \varphi\rangle\,dx=\lambda\,\int_\Omega |u|^{p-2}\,u\,\varphi\,dx,\qquad \text{for every}\ \varphi\in W^{1,p}_0(\Omega).
\]
We denote the collection of all eigenvalues on $\Omega$ by the symbol $\mathfrak{S}_{{\rm eigen},p}(\Omega)$.
\end{defi}
In other words, in view of the Lagrange Multipliers Rule, these eigenvalues correspond to the critical values of the functional \eqref{functional}
constrained to the set
\[
\Big\{u\in W^{1,p}_0(\Omega)\, :\, \|u\|_{L^p(\Omega)}=1\Big\}.
\]
For $p=2$, we exactly get back the eigenvalues of the Dirichlet-Laplacian. Observe that if we set
\[
\lambda_p(\Omega)=\inf_{\varphi\in C^\infty_0(\Omega)} \left\{\int_\Omega |\nabla \varphi|^p\,:\, \int_\Omega |\varphi|^p\,dx=1\right\},
\]
then we clearly have
\[
\lambda\ge \lambda_p(\Omega),\qquad \text{for every}\ \lambda\in\mathfrak{S}_{{\rm eigen},p}(\Omega).
\]
As a small historical comment, we recall that one of the very first appearance of this eigenvalue problem can be traced back to Lieb's paper \cite{Li}. Since then, it has been extensively studied in the literature, at least in the case where the embedding
\[
W^{1,p}_0(\Omega)\hookrightarrow L^p(\Omega),
\]
is compact. We refer for example to the classical papers \cite{An, CDeFG, Fri, GP, Le, Szu} or to the monographs \cite{Fra, Lin} and the references therein contained.
\vskip.2cm\noindent
On the contrary, in the non-compact case, the situation has not been vey much investigated. In particular, it is not clear (and hopeless, in principle) whether one can go beyond the simple situation of eigenvalues and construct a complete Spectral Theory.
In view of the previous subsection, a couple of naive (yet very natural) question may arise at this point:
\begin{itemize}
\item is it possible to give a definition of essential spectrum for the Dirichlet $p-$Laplacian, when $p\not=2$? 
\vskip.2cm
\item if {\it yes}, is it still possible to get a Persson--type result for the infimum of the essential spectrum? Does it coincide with 
\[
\mathcal{E}_p(\Omega):=\sup_{R>0} \lambda_p(\Omega\setminus \overline{B_R})\,?
\]
\end{itemize}
Answering these questions is the main goal of this paper.
\begin{oss} 
In the recent paper \cite{BraBriPri_low} it has been proved that it is possible to construct eigenvalues below the threshold $\mathcal{E}_p(\Omega)$, even in the non-compact case, by means of minmax formulations {\it \`a la} Courant-Fischer. We refer to \cite[Main Theorem]{BraBriPri_low} for the precise statement and some further comments. This result parallels a similar phenomenon valid for the case $p=2$, thus suggesting that $\mathcal{E}_p(\Omega)$ should be the natural candidate for the infimum of the essential spectrum for $p\not=2$....provided we can define it!
\end{oss}
\subsection{Main results}

We need to recall the definition of constrained Palais-Smale sequences: guided by the Weyl Criterion previously recalled, we will distinguish between {\it regular} and {\it singular} ones.
\begin{defi}
\label{defi:PS}
Let $1<p<\infty$ and let $\Omega\subseteq\mathbb{R}^N$ be an open set. We say that $\{u_n\}_{n\in\mathbb{N}}\subseteq W^{1,p}_0(\Omega)$ is a {\it constrained Palais-Smale sequence at the level $\lambda$} if:
\vskip.2cm
\begin{enumerate}
\item[(i)] $\|u_n\|_{L^p(\Omega)}=1$, for every $n\in\mathbb{N}$;
\vskip.2cm
\item[(ii)] $\lim\limits_{n\to\infty} \displaystyle\int_\Omega |\nabla u_n|^p\,dx=\lambda$;
\vskip.2cm
\item[(iii)] we have 
\[
\lim_{n\to\infty} \Big\|-\Delta_p u_n-\lambda\,|u_n|^{p-2}\,u_n\Big\|_{W^{-1,p'}(\Omega)}=0,
\]
where $W^{-1,p'}(\Omega)$ is the topological dual space of $W^{1,p}_0(\Omega)$.
\end{enumerate}
We say that such a sequence is:
\begin{itemize}
\item {\it regular}, if it admits a subsequence $\{u_{n_k}\}_{k\in\mathbb{N}}$ such that 
\[
\lim_{k\to\infty} \|u_{n_k}-u\|_{L^p(\Omega\cap B_R)}=0,\qquad \text{for every}\ R>0,
\]
for some $u\in W^{1,p}_0(\Omega)\setminus\{0\}$;
\vskip.2cm
\item {\it singular}, if we have
\[
\lim_{n\to\infty}  \|u_n\|_{L^p(\Omega\cap B_R)}=0,\qquad \text{for every}\ R>0.
\]
\end{itemize}
\end{defi}
\begin{oss}
It is not difficult to see that, for a constrained Palais-Smale sequence, one has the alternative: either it is regular or it is singular (see Proposition \ref{prop:urinason}). A level $\lambda$ admitting constrained Palais-Smale sequences is thus forced to arise from at least one of the two approximation procedures. 
\end{oss}
We are now ready for the most important definition of the paper: that of essential spectrum of the $p-$Laplacian. We will show that for $p=2$ our definition boils down to the usual one, characterized by means of the Weyl criterion recalled above. Thus, our analysis is a genuine extension to the nonlinear case of the classical case. This important fact will be shown in Theorem \ref{teo:essenza2} below.
\begin{defi}
Let $1<p<\infty$ and let $\Omega\subseteq\mathbb{R}^N$ be an open set. We define the {\it variational essential spectrum of the Dirichlet $p-$Laplacian on $\Omega$} as the the following set
\[
\mathfrak{S}_{{\rm ess},p}(\Omega)=\Big\{\lambda\, :\, \exists\ \text{singular constrained Palais-Smale sequence at the level $\lambda$} \Big\}.
\]
Finally, we set
\[
\mathfrak{S}_{p}(\Omega)=\mathfrak{S}_{{\rm ess},p}(\Omega)\cup \mathfrak{S}_{{\rm eigen},p}(\Omega),
\]
and call it the {\it variational spectrum of the Dirichlet $p-$Laplacian on $\Omega$}.
\end{defi}
\begin{oss}
We observe that $\mathfrak{S}_{{\rm ess},p}(\Omega)$ and $\mathfrak{S}_{{\rm eigen},p}(\Omega)$ are not necessarily disjoint sets. In general, this is not true already for the linear case $p=2$ (see for example \cite{Wi} and the references therein). 
\end{oss}

We can now present the main achievement of this paper: a generalization of the celebrated Persson Theorem to the case of the $p-$Laplacian. We will use the convention that
\[
\inf \emptyset=+\infty.
\] 
\begin{main}[Nonlinear Persson Theorem]
Let $1<p<\infty$ and let $\Omega\subseteq\mathbb{R}^N$ be an open set. We set 
\[
\mathcal{E}_p(\Omega)=\sup_{R>0} \lambda_p(\Omega\setminus\overline{B_R}).
\]
Then we have
\[
\mathcal{E}_p(\Omega)=\inf \mathfrak{S}_{{\rm ess},p}(\Omega).
\]
Moreover, if $\mathcal{E}_p(\Omega)<+\infty$ then it belongs to $\mathfrak{S}_{{\rm ess},p}(\Omega)$ and thus it coincides with its minimum.
\end{main}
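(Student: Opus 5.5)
The proof has two parts. The first is the inequality $\mathcal{E}_p(\Omega)\le \lambda$ for every $\lambda\in\mathfrak{S}_{{\rm ess},p}(\Omega)$. The second is the construction, when $\mathcal{E}_p(\Omega)<+\infty$, of a singular constrained Palais-Smale sequence at the level $\mathcal{E}_p(\Omega)$. Together they give both the identity and the fact that the infimum is attained; if $\mathcal{E}_p(\Omega)=+\infty$, the first part shows that $\mathfrak{S}_{{\rm ess},p}(\Omega)=\emptyset$.

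\emph{Lower bound.} Let $\{u_n\}$ be a singular constrained Palais-Smale sequence at the level $\lambda$, and fix $R>0$. Take a cut-off $\eta_R\in C^\infty(\mathbb{R}^N)$ with $0\le\eta_R\le1$, $\eta_R=0$ on $B_{R+1}$ (so that $\eta_R u_n\in W^{1,p}_0(\Omega\setminus\overline{B_R})$), $\eta_R=1$ outside $B_{R+2}$, and $|\nabla\eta_R|\le 2$. Since $\|u_n\|_{L^p(\Omega\cap B_{R+2})}\to0$, we get $\|\eta_R u_n\|_{L^p}\to1$. Moreover
\[
\|\nabla(\eta_R u_n)\|_{L^p}\le \|\nabla u_n\|_{L^p}+2\,\|u_n\|_{L^p(\Omega\cap B_{R+2})},
\]
so $\limsup_n\int|\nabla(\eta_R u_n)|^p\le\lambda$. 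By density of $C^\infty_0$ in $W^{1,p}_0$, this gives $\lambda_p(\Omega\setminus\overline{B_R})\le\lambda$. Taking the supremum over $R$ yields $\mathcal{E}_p(\Omega)\le\lambda$. Only properties (i) and (ii) and singularity are used here.

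\emph{Construction at the level $\mathcal{E}:=\mathcal{E}_p(\Omega)<\infty$.} Choose radii $R_n\nearrow\infty$. The map $R\mapsto\lambda_p(\Omega\setminus\overline{B_R})$ is nondecreasing, so $\lambda_p(\Omega\setminus\overline{B_{R_n}})\le \mathcal{E}$. Pick $\varphi_n\in C^\infty_0(\Omega\setminus\overline{B_{R_n}})$ with $\|\varphi_n\|_{L^p}=1$ and $\int|\nabla\varphi_n|^p\le \mathcal{E}+1/n$. Singularity and (i) are then automatic, and (ii) follows because $\liminf\int|\nabla\varphi_n|^p\ge\lim_n\lambda_p(\Omega\setminus \overline{B_{R_n}})=\mathcal{E}$. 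The real issue is (iii), since $\varphi_n$ is only an almost minimizer on the exterior domain $\Omega_n:=\Omega\setminus\overline{B_{R_n}}$ and not on $\Omega$.

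\emph{Obtaining (iii): the main obstacle.} Apply Ekeland's variational principle on $\Omega_n$ to the functional $\int|\nabla v|^p$ restricted to the $L^p$-sphere of $W^{1,p}_0(\Omega_n)$. This replaces $\varphi_n$ by $v_n$, still close to $\varphi_n$ in $W^{1,p}$ and still supported in $\overline{\Omega_n}$, such that
\[
\|-\Delta_p v_n-\mu_n|v_n|^{p-2}v_n\|_{W^{-1,p'}(\Omega_n)}\to0,\qquad \mu_n=\int|\nabla v_n|^p\to\mathcal{E}.
\]
The Lagrange multiplier is identified by testing the equation with $v_n$ itself. Since the $L^p$-sphere is a smooth manifold for $p>1$, the standard constrained form of Ekeland's principle applies.

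The difficulty is that the test functions in (iii) range over all of $W^{1,p}_0(\Omega)$, not just $W^{1,p}_0(\Omega_n)$. I would handle this by a second localization: work with a truncation radius $R_n'\gg R_n$ and estimate errors near the sphere $\partial B_{R_n}$. More precisely, I would choose $v_n$ as an almost minimizer on $\Omega\setminus\overline{B_{R_n}}$ whose mass near $\partial B_{R_n}$ is small. This is possible because of the monotonicity and the convergence $\lambda_p(\Omega\setminus\overline{B_R})\to\mathcal{E}$: an almost minimizer for the larger radius is also an almost minimizer for the smaller radius, up to $o(1)$.

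Concretely, given $\psi\in W^{1,p}_0(\Omega)$, split $\psi=\eta\psi+(1-\eta)\psi$ with a cut-off $\eta$ supported in $\Omega\setminus \overline{B_{R_n}}$ and equal to $1$ outside $B_{2R_n}$, taking $v_n$ to be an almost minimizer on $\Omega\setminus \overline{B_{2R_n}}$ but treating it as a competitor on $\Omega\setminus\overline{B_{R_n}}$. Then the pairing with $\eta\psi$ is controlled by Ekeland on $\Omega\setminus\overline{B_{R_n}}$, while the pairing with $(1-\eta)\psi$ vanishes because $v_n$ is supported outside $B_{2R_n}$ up to the Ekeland perturbation. The Ekeland perturbation is small in $W^{1,p}$, and by continuity of $-\Delta_p$ from $W^{1,p}_0$ to $W^{-1,p'}$ its contribution tends to zero. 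I expect this bookkeeping, in particular ensuring that the Ekeland point stays concentrated away from $B_{R_n}$ so that singularity persists, to be the delicate step.
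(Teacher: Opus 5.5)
Your proposal is correct, and it proves both inequalities by a route different from the paper's.

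\textbf{Lower bound.} For $\mathcal{E}_p(\Omega)\le\inf\mathfrak{S}_{{\rm ess},p}(\Omega)$, the paper shows that every constrained Palais-Smale sequence below $\mathcal{E}_p(\Omega)$ is regular, relying on the compactness Lemma~\ref{lm:lemmadelca} imported from another paper. Your cut-off argument is self-contained and uses only (i), (ii) and singularity.

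\textbf{Construction at the level $\mathcal{E}_p(\Omega)$.} The paper minimizes $\int_\Omega|\nabla\varphi|^p+\int_\Omega V_n|\varphi|^p$, with $V_n=\sqrt{n}\,\zeta_n+R_n^{-2}|x|$, in a weighted space. The confining term supplies compactness, so genuine minimizers exist. It then needs the inductive estimate of Proposition~\ref{prop:salvaculo} to get $\int_\Omega V_n^{p'}|u_n|^p\to0$. In exchange, the residual is explicitly $-V_n|u_n|^{p-2}u_n+(\lambda_n-\mathcal{E}_p(\Omega))|u_n|^{p-2}u_n$, which is small even in $L^{p'}(\Omega)$; for $p=2$ this gives a Weyl-type sequence directly.

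Your Ekeland argument needs no compactness, no weighted space and no appendix, and the buffer annulus is exactly the right device.
\begin{itemize}
\item Take $\varphi_n\in C^\infty_0(\Omega\setminus\overline{B_{2R_n}})$. It is a $\delta_n$-almost minimizer on $\Omega\setminus\overline{B_{R_n}}$ with $\delta_n\to0$, because both $\lambda_p(\Omega\setminus\overline{B_{R_n}})$ and $\lambda_p(\Omega\setminus\overline{B_{2R_n}})$ tend to $\mathcal{E}_p(\Omega)$.
\item Ekeland on that larger domain gives $v_n\in W^{1,p}_0(\Omega\setminus\overline{B_{R_n}})$ with $\|v_n-\varphi_n\|_{W^{1,p}}\le\sqrt{\delta_n}$ and small constrained derivative.
\item The pairing of the residual with $\eta\psi$ is controlled by that derivative.
\item The residual of $\varphi_n$ annihilates $(1-\eta)\psi$, since the supports are disjoint. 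So the pairing with $(1-\eta)\psi$ reduces to the difference of the residuals of $v_n$ and $\varphi_n$, which is small.
\end{itemize}
You only get smallness in $W^{-1,p'}(\Omega)$, but that is all the statement requires.

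\textbf{Points to tidy.}
\begin{itemize}
\item You need \emph{uniform} continuity of $u\mapsto-\Delta_p u$ and $u\mapsto|u|^{p-2}u$ on bounded subsets of $W^{1,p}_0(\Omega)$, not mere continuity, since both $v_n$ and $\varphi_n$ move with $n$. This holds by the elementary inequalities for $|a|^{p-2}a-|b|^{p-2}b$.
\item The cut-off $\eta$ must vanish on a neighbourhood of $\overline{B_{R_n}}$ and have gradient bounded independently of $n$. Then $\eta\psi\in W^{1,p}_0(\Omega\setminus\overline{B_{R_n}})$ with $\|\eta\psi\|_{W^{1,p}}\le C\|\psi\|_{W^{1,p}}$.
\item In your last paragraph, the almost minimizer on $\Omega\setminus\overline{B_{2R_n}}$ should be $\varphi_n$, with $v_n$ reserved for the Ekeland point.
\item The step you flag as delicate is actually trivial. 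Since $v_n\in W^{1,p}_0(\Omega\setminus\overline{B_{R_n}})$, it vanishes on $B_{R_n}$, so singularity is automatic. Also, (ii) follows from $\lambda_p(\Omega\setminus\overline{B_{R_n}})\le\int|\nabla v_n|^p\le\int|\nabla\varphi_n|^p$.
\end{itemize}
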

The proof of this result is postponed to Section \ref{sec:3}. 
\vskip.2cm\noindent
We immediately derive an interesting consequence of the previous result: the sharp Poincar\'e constant $\lambda_p$ always coincides with the bottom of the spectrum, exactly as in the linear case (see for example \cite[Chapter 10, Section 1.1]{BS}). 
\begin{corollario}
Let $1<p<\infty$ and let $\Omega\subseteq\mathbb{R}^N$ be a non-empty open set. Then we have 
\[
\lambda_p(\Omega)=\min\mathfrak{S}_{p}(\Omega).
\]
\end{corollario}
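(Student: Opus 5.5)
We need to prove that $\lambda_p(\Omega)=\min\mathfrak{S}_p(\Omega)$. The plan has two parts: a lower bound showing that every element of $\mathfrak{S}_p(\Omega)$ is at least $\lambda_p(\Omega)$, and an attainment step showing that $\lambda_p(\Omega)$ itself lies in $\mathfrak{S}_p(\Omega)$.

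\emph{Lower bound.} For eigenvalues this was already observed after the definition. Testing the weak equation with $u$ itself gives $\int_\Omega|\nabla u|^p\,dx=\lambda\int_\Omega|u|^p\,dx$. By density of $C^\infty_0(\Omega)$ in $W^{1,p}_0(\Omega)$, the quotient is at least $\lambda_p(\Omega)$. For $\lambda\in\mathfrak{S}_{{\rm ess},p}(\Omega)$ we use property (ii) of a constrained Palais-Smale sequence: $\lambda=\lim_n\int_\Omega|\nabla u_n|^p\,dx\ge\lambda_p(\Omega)$, again by density and $\|u_n\|_{L^p}=1$. Hence $\inf\mathfrak{S}_p(\Omega)\ge\lambda_p(\Omega)$.

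\emph{Attainment.} We take a minimizing sequence for $\lambda_p(\Omega)$ and improve it via Ekeland's variational principle. The functional $\varphi\mapsto\int_\Omega|\nabla\varphi|^p\,dx$ is $C^1$ on $W^{1,p}_0(\Omega)$. The constraint $\mathcal{M}=\{\|u\|_{L^p}=1\}$ is a $C^1$ Banach submanifold, because the derivative of $\|\cdot\|_{L^p}^p$ at $u$, namely $p|u|^{p-2}u$, does not vanish on $\mathcal{M}$ (it is nonzero when tested against $u$). Ekeland's principle on the complete metric space $\mathcal{M}$ then produces $u_n\in\mathcal{M}$ with $\int_\Omega|\nabla u_n|^p\,dx\to\lambda_p(\Omega)$ and constrained derivative tending to zero. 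By the Lagrange multiplier rule, there are $\mu_n$ with $\|-\Delta_p u_n-\mu_n|u_n|^{p-2}u_n\|_{W^{-1,p'}}\to0$. Testing with $u_n$ gives $\mu_n-\int_\Omega|\nabla u_n|^p\,dx\to0$, since $u_n$ is bounded in $W^{1,p}_0$. So $\mu_n\to\lambda_p(\Omega)$, and replacing $\mu_n$ by $\lambda_p(\Omega)$ costs $|\mu_n-\lambda_p|\,\||u_n|^{p-1}\|_{L^{p'}}=|\mu_n-\lambda_p|\to0$ in the dual norm. Thus $\{u_n\}$ is a constrained Palais-Smale sequence at level $\lambda_p(\Omega)$. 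This step is the main technical point, and I expect it to be the obstacle: the Ekeland/deformation argument has to be carried out on the constraint, with the multiplier correctly identified.

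\emph{Conclusion.} By Proposition \ref{prop:urinason}, the sequence is either regular or singular. If it is singular, then $\lambda_p(\Omega)\in\mathfrak{S}_{{\rm ess},p}(\Omega)$. If it is regular, a subsequence converges in $L^p_{loc}$ to some $u\ne0$, and also weakly in $W^{1,p}_0$. We then pass to the limit in (iii), tested against a fixed $\varphi\in C^\infty_0(\Omega)$, to get $-\Delta_pu=\lambda_p(\Omega)|u|^{p-2}u$, so $\lambda_p(\Omega)\in\mathfrak{S}_{{\rm eigen},p}(\Omega)$. The right-hand side of this limit is easy because of local strong convergence. The left-hand side needs a.e. convergence of gradients, which is a standard Boccardo--Murat type argument using the $W^{-1,p'}$ convergence.

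There is a simpler alternative that avoids the gradient convergence. In the regular case we argue by lower semicontinuity: $\int_\Omega|\nabla u|^p\,dx\le\lambda_p(\Omega)$, while $\|u\|_{L^p}\le1$ and $u\ne0$, with $\lambda_p(\Omega)\|u\|_{L^p}^p\le\int_\Omega|\nabla u|^p\,dx$. We can use the Main Theorem to decide directly instead: if $\lambda_p(\Omega)=\mathcal{E}_p(\Omega)$, then the Main Theorem (applicable since $\lambda_p(\Omega)<\infty$ for nonempty $\Omega$) gives $\lambda_p(\Omega)\in\mathfrak{S}_{{\rm ess},p}(\Omega)$. Otherwise $\lambda_p(\Omega)<\mathcal{E}_p(\Omega)$, and the Main Theorem shows the sequence cannot be singular, so it is regular. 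Then $u$ satisfies $\int_\Omega|\nabla u|^p\,dx\le\lambda_p\|u\|^p_{L^p}$ after a concentration-compactness splitting in which the tail mass carries energy at least $\mathcal{E}_p>\lambda_p$, which forces $\|u\|_{L^p}=1$. Then $u$ is a minimizer, hence an eigenvalue function by the Lagrange rule. In either route, $\lambda_p(\Omega)\in\mathfrak{S}_p(\Omega)$, and the minimum is attained.
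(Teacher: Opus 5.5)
Your proof is correct, and your main route is genuinely different from the paper's.

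\textbf{The paper's route.} It splits into two cases. If $\lambda_p(\Omega)<\mathcal{E}_p(\Omega)$, it invokes the external result \cite[Main Theorem]{BraBriPri_low} to get a first eigenfunction. If $\lambda_p(\Omega)=\mathcal{E}_p(\Omega)$, it invokes the Nonlinear Persson Theorem, whose proof uses the penalized ground states and the vanishing estimates of Appendix \ref{sec:A}.

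\textbf{Your route.} You use Ekeland's principle on $\{\|u\|_{L^p(\Omega)}=1\}$ to upgrade a minimizing sequence to a constrained Palais--Smale sequence at level $\lambda_p(\Omega)$. You then close with the dichotomy of Proposition \ref{prop:urinason}. This step is standard, not an obstacle: the tangential bound extends to the full $W^{-1,p'}$ norm because $\{u_n\}$ is bounded in $W^{1,p}_0(\Omega)$ and the multiplier is $\int_\Omega|\nabla u_n|^p\,dx$. In the regular case you do not need a Boccardo--Murat argument. Proposition \ref{prop:2} already gives local strong convergence of the gradients, and the Corollary at the end of Section \ref{sec:2} says exactly that a regular Palais--Smale sequence at level $\lambda$ makes $\lambda$ an eigenvalue.

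\textbf{What each route buys.} Since Section \ref{sec:2} precedes and is independent of the Main Theorem, your argument is more elementary and self-contained: it needs neither Persson's theorem nor \cite{BraBriPri_low}. The paper's route gives finer information about which part of the spectrum $\lambda_p(\Omega)$ lies in.
\begin{itemize}
\item When $\lambda_p(\Omega)<\mathcal{E}_p(\Omega)$, it is an eigenvalue. You can recover this too, since the proof of Lemma \ref{lm:1} shows every Palais--Smale sequence below $\mathcal{E}_p(\Omega)$ is regular.
\item When $\lambda_p(\Omega)=\mathcal{E}_p(\Omega)$, it belongs to $\mathfrak{S}_{{\rm ess},p}(\Omega)$. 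Here your Ekeland sequence may well be regular, so your argument would not detect this.
\end{itemize}

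\textbf{The ``simpler alternative'' paragraph.} It is unnecessary and only a sketch. The concentration-compactness splitting that forces $\|u\|_{L^p(\Omega)}=1$ is asserted, not proved; it is essentially the content of \cite{BraBriPri_low}. You can drop it without loss.
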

\begin{proof}
By construction and the definition of $\lambda_p(\Omega)$, it is easily seen that 
\[
\lambda_p(\Omega)\le \lambda,\qquad \text{for every}\ \lambda\in\mathfrak{S}_{p}(\Omega).
\]
To prove that $\lambda_p(\Omega)$ always belongs to the spectrum, we may distinguish two cases: 
\begin{itemize}
\item if $\lambda_p(\Omega)<\mathcal{E}_p(\Omega)$, then we have
\[
\lambda_p(\Omega)\in \mathfrak{S}_{{\rm eigen},p}(\Omega)\subseteq \mathfrak{S}_{p}(\Omega),
\]
thanks to \cite[Main Theorem]{BraBriPri_low};
\vskip.2cm
\item if $\lambda_p(\Omega)=\mathcal{E}_p(\Omega)$, then in particular $\mathcal{E}_p(\Omega)<+\infty$ and by the nonlinear Persson Theorem we have 
\[
\lambda_p(\Omega)=\min \mathfrak{S}_{{\rm ess},p}(\Omega).
\]
\end{itemize}
This concludes the proof.
\end{proof}

\subsection{Some comments on the proof}
As in Persson's paper \cite{Pe}, the Main Theorem will be obtained by proving separately that
\[
\mathcal{E}_p(\Omega)\le \inf\mathfrak{S}_{{\rm ess},p}(\Omega)\qquad \text{and}\qquad \mathcal{E}_p(\Omega)\ge \inf\mathfrak{S}_{{\rm ess},p}(\Omega).
\]
While the leftmost inequality is relatively easy to be proven, the rightmost one is much more challenging (unless $\mathcal{E}_p(\Omega)=+\infty$, of course). Indeed, we need to construct a singular constrained Palais-Smale sequence $\{u_n\}_{n\in\mathbb{N}}$ at the level $\mathcal{E}_p(\Omega)$. The difficult point is to obtain that such a sequence is made of ``almost critical points'', i.e. that
\[
\lim_{n\to\infty} \Big\|-\Delta_p u_n-\mathcal{E}_p(\Omega)\,|u_n|^{p-2}\,u_n\Big\|_{W^{-1,p'}(\Omega)}=0.
\]
For this part, the proof by Persson exploits the Spectral Resolution of the Dirichlet-Laplacian (see \cite[Lemma 2.1]{Pe}). This is nothing but the Spectral Theorem in its general formulation (see for example \cite[Theorem 3.6]{Te}), which permits us to write the Dirichlet-Laplacian as a linear superposition of orthogonal projection operators. This approach is not feasible in the nonlinear case and a new strategy is needed.
\par
The idea we use is quite natural: in view of its definition, we can think of $\mathcal{E}_p(\Omega)$ as the first eigenvalue of a set ``escaping at infinity''. We then try to consider the ground states of the Schr\"odinger--type operators 
\[
\varphi\mapsto-\Delta_p \varphi + V_n\,|\varphi|^{p-2}\,\varphi,\qquad \text{for}\ n\in\mathbb{N},
\]
for a sequence of ``repulsive'' potentials $\{V_n\}_{n\in\mathbb{N}}$. The effect of the potential term $V_n$ is that of trying to concentrate the ground states outside the set $\Omega\cap B_n$. For example, one can take $V_n$ to be a bump-function supported on $B_n$ and multiplied by a factor $\Lambda_n$ diverging with $n\in\mathbb{N}$. However, in general these operators will not have any ground states, due to the lack of coercivity of the associated energy functional
\[
\int_\Omega |\nabla \varphi|^p\,dx+\int_\Omega V_n\,|\varphi|^p\,dx.
\]
In order to overcome this problem, we ``artificially'' create a bit of compactness, by adding a further potential term $W_n$: this time, this is a strongly confining potential, but with a positive intensity factor $\varepsilon_n$ which vanishes as $n$ diverges. For example, one could imagine to add
\[
W_n(x)=\varepsilon_n\,|x|,\qquad \text{for}\ x\in\Omega.
\]
Thus, we are led to consider the following ground state energy
\[
\inf_{\varphi\in W^{1,p}_0(\Omega)} \left\{\int_\Omega |\nabla \varphi|^p\,dx+\int_\Omega V_n\,|\varphi|^p\,dx+\varepsilon_n\,\int_\Omega |x|\,|\varphi|^p\,dx:\, \|\varphi\|_{L^p(\Omega)}=1\right\}.
\]
It is not difficult to see that such an infimum is attained by a function $u_n$ (actually, even uniquely, up to the choice of the sign).
It turns out that, by carefully choosing the relation between the ``repulsive effect'' of $V_n$ and the ``confining strength'' $\varepsilon_n$, the sequence $u_n$ has the desired properties. In particular, it is made of ``almost critical points'' at the level $\mathcal{E}_p(\Omega)$ and it locally converges to $0$, due to the effect of the factor $V_n$.
\par
We point out that this proof, partly inspired by ideas recently used by the first two authors in \cite{BraBriPri_low, BraBriPri_periodic} and \cite{BraBriPri_steiner}, is entirely based on variational principles and thus nonlinear in nature. In particular, it gives a new proof of Persson's result, not relying on the Spectral Theorem.

\subsection{Bonus track: an example}
As an application of the theory developed in this paper, we compute in Theorem \ref{teo:striscia} the variational spectrum of the Dirichlet $p-$Laplacian on a rectilinear strip
\[
\mathcal{S}_\alpha=(-\alpha,\alpha)\times\mathbb{R}.
\]
We show that it has the following structure
\[
\mathfrak{S}_{\mathrm{ess}, p}(\mathcal{S}_\alpha) = \left[\lambda_p\big((-\alpha,\alpha)\big), +\infty\right)\qquad \text{and}\qquad \mathfrak{S}_{{\rm eigen},p}(\mathcal{S}_\alpha)=\emptyset,
\]
i.e. the spectrum is purely essential, coinciding with the half-line starting at the sharp Poincar\'e constant of the horizontal section. Moreover, there are no eigenvalues, not even embedded ones.
This result generalizes what happens for the case $p=2$ (see for example \cite[Introduction]{Wi} and the references therein). 
\par
In order to compute the essential spectrum, we construct singular constrained Palais-Smale sequences $\{u_n\}_{n\in\mathbb{N}}$ arguing as follows: 
\begin{itemize}
\item choose $\lambda> \lambda_p((-\alpha,\alpha))$ and take the first eigenfunction of the rectangle $(-\alpha,\alpha)\times(-\ell,\ell)$, with $\ell>0$ chosen so that its first eigenvalue coincides with $\lambda$;
\vskip.2cm
\item extend this eigenfunction by $n$ odd reflections and then set this extension equal to $0$ in the remainder of $\mathcal{S}_\alpha$. We call $\psi_n$ the function obtained in this way;
\vskip.2cm
\item define $u_n=\psi_n/\|\psi_n\|_{L^p}$  and then compute $-\Delta_p u_n-\lambda\, |u_n|^{p-2}\,u_n$;
\vskip.2cm
\item show that the extension by zero has created a singular term in $-\Delta_p u_n$, which is however infinitesimal in $W^{-1,p'}$ norm.
\end{itemize} 
This strategy suitably extends the argument used in \cite[Proposition 6.1]{BiaBraOgn} for $p=2$. We show here that it is possible to safely perform this construction, without having an explicit expression of this first eigenfunction of a rectangle.
\par
On the other hand, proving the absence of eigenvalues embedded in the essential spectrum in general is a very difficult and interesting task, already in the linear case. For $p=2$, a possible strategy is to combine {\it Carleman estimates} and the {\it Unique Continuation Principle}, as in the classical paper \cite{Ro} by Roze (see also the more recent one \cite{KT}, for example). Such a strategy would be hopeless for $p\not=2$, since the validity of the UCP is still a major open problem for the $p-$Laplacian. 
\par
Another possibility, which is fruitful in the case of peculiar geometries, is to use integral identities of Pohozaev-Rellich--type. We proceed in this way, by proving an {\it ad hoc} integral identity of this type (see Proposition \ref{prop:pocozaev} below), inspired by the papers \cite{EL} and \cite{Re}. As it is typical in the case of the $p-$Laplacian, a careful approximation argument will be needed, in order to circumvent the possible lack of regularity of eigenfunctions.

\subsection{Plan of the paper}
We start with Section \ref{sec:2}, discussing the behaviour of the gradients for a constrained Palais-Smale sequence. In the same section, we also show that levels $\lambda$ for which there exists a regular constrained Palais-Smale sequence are actually eigenvalues, as one should expect. The proof of the Main Theorem is contained in Section \ref{sec:3}.
In Section \ref{sec:4} we show that for $p=2$ our definition of essential spectrum boils down to the usual one. The (long) Section \ref{sec:5} contains the exact determination of the spectrum of the Dirichlet $p-$Laplacian in a planar strip. Finally, Appendix \ref{sec:A} contains a technical result, suitably adapted from the recent paper \cite{BraBriPri_periodic}, which is crucial for constructing a singular Palais-Smale sequence at the ``gateway'' level $\mathcal{E}_p(\Omega)$.

\begin{ack}
We wish to thank Francesca Bianchi, Roberto Ognibene and Francesca Prinari for some previous collaborations, which lead us to a better understanding of the problem. L.\, Brasco thanks Vladimir Bobkov, who generously shared a preliminary version of his paper \cite{Bob}. The results of this paper have been presented during a talk of G.\, Franzina in Munich in April 2026: he wishes to thank the Department of Mathematics of the Technical University of Munich for the kind invitation and the nice atmosphere during the staying.
\par
L.\,Briani and G. Franzina are members of the {\it Gruppo Nazionale per l'Analisi Matematica, la Probabilit\`a
e le loro Applicazioni} (GNAMPA) of the Istituto Nazionale di Alta Matematica (INdAM). 
\par
L.\,Brasco has been financially supported by the {\it Fondo di Ateneo per la Ricerca} FAR 2024 and the {\it Fondo per l'Incentivazione alla Ricerca Dipartimentale} FIRD 2024 of the University of Ferrara.
\par
L.\, Briani gratefully acknowledges the financial support of the project GNAMPA 2026  ``Problemi di ottimizzazione di forma in contesti anisotropi e non-locali" ({\tt  CUP E53C25002010001}) and of the DFG through the Emmy Noether Programme (project number 509436910)
\par
G.\, Franzina gratefully acknowledges the financial support of the project GNAMPA 2026 ``Variational and PDE methods in the study of topological singularities in complex materials” ({\tt CUP E53C2500201000}).

\end{ack}

\section{Preliminaries}
\label{sec:2}
We start with the following basic result.
\begin{lm}
\label{lm:mizuno}
Let $1<p<\infty$ and let $\Omega\subseteq\mathbb{R}^N$ be an open set. Let 
$\{u_n\}_{n\in\mathbb{N}}\subseteq W^{1,p}_0(\Omega)$ be a sequence weakly converging in $W^{1,p}(\Omega)$ to a function $u\in W^{1,p}_0(\Omega)$. Then we have
\begin{equation}
\label{standard}
\lim_{n\to\infty}\|u_n-u\|_{L^{p}(\Omega\cap B_R)}=0,\quad  \text{for every}\ R>0.
\end{equation}
\end{lm}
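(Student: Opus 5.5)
The plan is to reduce the statement to the Rellich--Kondrachov theorem on a bounded smooth set, using extension by zero so that no regularity of $\partial\Omega$ is needed. Since $u_n,u\in W^{1,p}_0(\Omega)$, the extensions by zero $\widetilde u_n,\widetilde u$ to the whole $\mathbb{R}^N$ belong to $W^{1,p}(\mathbb{R}^N)$. Moreover $\nabla\widetilde u_n=\widetilde{\nabla u_n}$ and $\|\widetilde u_n\|_{W^{1,p}(\mathbb{R}^N)}=\|u_n\|_{W^{1,p}(\Omega)}$. The map $\varphi\mapsto\widetilde\varphi$ is linear and isometric from $W^{1,p}_0(\Omega)$ into $W^{1,p}(\mathbb{R}^N)$, so it is weak-to-weak continuous. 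This uses the fact that $W^{1,p}_0(\Omega)$ is a closed subspace of $W^{1,p}(\Omega)$, hence weakly closed, with the weak topology induced by $W^{1,p}(\Omega)$. Consequently $\widetilde u_n\rightharpoonup\widetilde u$ weakly in $W^{1,p}(\mathbb{R}^N)$.

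Next I fix $R>0$. The restriction $W^{1,p}(\mathbb{R}^N)\to W^{1,p}(B_R)$ is linear and bounded, so $\widetilde u_n\rightharpoonup\widetilde u$ weakly in $W^{1,p}(B_R)$. Because $B_R$ is a bounded Lipschitz domain, the Rellich--Kondrachov theorem makes the embedding $W^{1,p}(B_R)\hookrightarrow L^p(B_R)$ compact. A compact linear operator sends weakly convergent sequences to strongly convergent ones. Therefore $\widetilde u_n\to\widetilde u$ in $L^p(B_R)$, which gives
\[
\|u_n-u\|_{L^p(\Omega\cap B_R)}=\|\widetilde u_n-\widetilde u\|_{L^p(B_R)}\to 0 .
\]
Since $R>0$ is arbitrary, this proves \eqref{standard}.

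There is no serious obstacle here. The one point requiring care is not to apply Rellich--Kondrachov on $\Omega\cap B_R$ itself, which may be irregular. Extending by zero and working on the ball $B_R$ avoids this. Equivalently, one can note that a weakly convergent sequence is bounded, that every subsequence then has a further subsequence converging strongly in $L^p(B_R)$ by compactness, and that the limit must be $\widetilde u$ by uniqueness of weak limits; hence the whole sequence converges.
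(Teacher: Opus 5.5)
Your proof is correct and is the same as the paper's: extend by zero so that the sequence lives in $W^{1,p}(B_R)$, then use the compactness of $W^{1,p}(B_R)\hookrightarrow L^p(B_R)$ to upgrade weak convergence to strong convergence in $L^p(B_R)$ for the whole sequence; the paper cites a standard reference for that last step, while you spell it out via compact operators or the subsequence argument. The only slip is in the justification: weak convergence passes from $W^{1,p}(\Omega)$ to the subspace $W^{1,p}_0(\Omega)$ by Hahn--Banach, which works for any subspace, so closedness is not what is being used there.
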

\begin{proof}
The proof is quite standard, but we detail the argument for completeness.
As usual when working with the space $W^{1,p}_0(\Omega)$, we can always think of our functions $\{u_n\}_{n\in\mathbb{N}}$ and $u$ as defined on the whole $\mathbb{R}^N$: we extend them to be $0$ outside $\Omega$. Thus, we have in particular that $\{u_n\}_{n\in\mathbb{N}}\subseteq W^{1,p}(B_R)$, for every $R>0$.
Observe that for every $R>0$, the embedding $W^{1,p}(B_R)\hookrightarrow L^p(B_R)$ is compact. By using this fact and the weak convergence in $W^{1,p}(\Omega)$ (which holds by assumption), a standard argument (see for example \cite[Lemma 3.8.7 \& Remark 3.9.5]{BraBook}) also gives \eqref{standard}, without the necessity of extracting further subsequences.
\end{proof}
The next result is quite important for the whole discussion.
\begin{prop}
\label{prop:urinason}
Let $1<p<\infty$ and let $\Omega\subseteq\mathbb{R}^N$ be an open set. Let $\{u_n\}_{n\in\mathbb{N}}\subseteq W^{1,p}_0(\Omega)$ be a constrained Palais-Smale sequence, at the level $\lambda$. Then $\{u_n\}_{n\in\mathbb{N}}$ is either regular or singular, according to Definition \ref{defi:PS}.
\end{prop}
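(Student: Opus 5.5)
The plan is to show that any constrained Palais-Smale sequence which fails to be singular is automatically regular. This needs only compactness on bounded sets, given by Lemma \ref{lm:mizuno}; property (iii) of Definition \ref{defi:PS} plays no role.

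First, I will record that $\{u_n\}_{n\in\mathbb{N}}$ is bounded in $W^{1,p}_0(\Omega)$. By (i) we have $\|u_n\|_{L^p(\Omega)}=1$. By (ii) the sequence $\int_\Omega|\nabla u_n|^p\,dx$ converges to $\lambda$, so it is bounded.

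Next, suppose $\{u_n\}_{n\in\mathbb{N}}$ is not singular. Then there exist a radius $R_0>0$, a number $\delta>0$ and a subsequence $\{u_{n_k}\}_{k\in\mathbb{N}}$ such that
\[
\|u_{n_k}\|_{L^p(\Omega\cap B_{R_0})}\ge \delta,\qquad \text{for every}\ k\in\mathbb{N}.
\]
Since $1<p<\infty$, the space $W^{1,p}_0(\Omega)$ is reflexive. It is also a closed subspace of $W^{1,p}(\Omega)$, hence weakly closed. By the uniform bound, I can extract a further subsequence, not relabeled, that converges weakly in $W^{1,p}(\Omega)$ to some $u\in W^{1,p}_0(\Omega)$.

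Applying Lemma \ref{lm:mizuno} to this subsequence gives $\|u_{n_k}-u\|_{L^p(\Omega\cap B_R)}\to 0$ for every $R>0$. In particular, taking $R=R_0$ and passing to the limit in the lower bound above,
\[
\|u\|_{L^p(\Omega\cap B_{R_0})}=\lim_{k\to\infty}\|u_{n_k}\|_{L^p(\Omega\cap B_{R_0})}\ge\delta>0.
\]
Hence $u\neq 0$, and the sequence is regular.

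For completeness, I will also observe that the two alternatives are mutually exclusive. If the sequence were singular, every subsequence would converge to $0$ in $L^p(\Omega\cap B_R)$ for every $R>0$. By uniqueness of local $L^p$ limits, any $u$ as in the definition of regularity would then vanish a.e. on every ball, hence $u\equiv 0$, which is excluded.

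There is no real obstacle here. The only point needing care is the bookkeeping of subsequences. The negation of ``singular'' only provides a $\limsup$ bound on a single ball, so one must pass first to the subsequence realizing it and only then extract the weakly convergent one. One must also make sure the weak limit stays in $W^{1,p}_0(\Omega)$, which holds because this space is weakly closed.
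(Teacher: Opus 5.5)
Your proof is correct and follows essentially the same route as the paper: boundedness from (i)--(ii), a subsequence witnessing non-singularity, a further weakly convergent subsequence, and Lemma \ref{lm:mizuno} to show that the limit is nonzero. The only difference is cosmetic, and your remark on mutual exclusivity is a correct extra that the paper does not state: the paper encodes local $L^p$ convergence through the metric $d_{L^p}$ and argues by contradiction, whereas you fix a single ball $B_{R_0}$ and pass the lower bound directly to the limit, which is slightly more economical.
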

\begin{proof}
We first observe that, if we define the distance
\[
d_{L^p}(u,v):=\sum_{j=1}^\infty \frac{1}{j^2}\, \frac{\|u-v\|_{L^p(\Omega\cap B_j)}}{1+\|u-v\|_{L^p(\Omega\cap B_j)}},\qquad \text{for every}\ u,v\in L^p(\Omega),
\]
we have that
\[
\lim_{n\to\infty} \|\varphi_n-\varphi\|_{L^p(\Omega\cap B_R)}=0,\quad \text{for every}\ R>0\qquad \Longleftrightarrow\qquad \lim_{n\to\infty} d_{L^p}(\varphi_n,\varphi)=0.
\] 
Let us suppose that the sequence $\{u_n\}_{n\in\mathbb{N}}$ of the statement is not singular. Accordingly, we have that 
\[
\delta:=\limsup_{n\to\infty} d_{L^p}(u_n,0)>0.
\]
We can extract a subsequence $\{u_{n_k}\}_{k\in\mathbb{N}}$ such that 
\[
\lim_{k\to\infty} d_{L^p}(u_{n_k},0)=\limsup_{n\to\infty} d_{L^p}(u_n,0)=\delta>0.
\]
By definition of constrained Palais-Smale sequence, we get in particular that $\{u_{n_k}\}_{k\in\mathbb{N}}$ is bounded in $W^{1,p}_0(\Omega)$. Up to a further subsequence, we thus get that $\{u_{n_k}\}_{k\in\mathbb{N}}$ converges weakly in $W^{1,p}(\Omega)$ to a function $u\in W^{1,p}_0(\Omega)$. We argue by contradiction and suppose that $u\equiv 0$: by Lemma \ref{lm:mizuno}, we would get that 
\[
\lim_{k\to\infty} \|u_{n_k}\|_{L^p(\Omega\cap B_R)}=0,\qquad \text{for every}\ R>0.
\]
In particular, by choosing $R=j\in \mathbb{N}\setminus\{0\}$, we would obtain
\[
\delta=\lim_{k\to\infty} d_{L^p}(u_{n_k},0)=0,
\] 
a contradiction.
\end{proof}

The following compactness result will be useful. This is a particular case of \cite[Lemma A.1]{BraBriPri_periodic}, to which we refer for the elementary proof.
\begin{lm}
\label{lm:lemmadelca}
Let $1<p<\infty$ and let $\Omega\subseteq\mathbb{R}^N$ be an open set such that $\mathcal{E}_{p}(\Omega)>0$. Let 
$\{u_n\}_{n\in\mathbb{N}}\subseteq W^{1,p}_0(\Omega)$
be a sequence with the following properties:
\begin{itemize}
\item $\|u_n\|_{L^p(\Omega)}=1$, for every $n\in\mathbb{N}$;
\vskip.2cm
\item $\{u_n\}_{n\in\mathbb{N}}$ weakly converges in $W^{1,p}(\Omega)$ to some function $u\in W^{1,p}_0(\Omega)$;
\vskip.2cm
\item there exist $\lambda<\mathcal{E}_{p}(\Omega)$ and $n_0\in\mathbb{N}$ such that 
\[
\int_\Omega |\nabla u_n|^p\,dx\le \lambda,\qquad \text{for every}\ n\ge n_0.
\]
\end{itemize}
Then, there exists $0<\mathfrak{d}=\mathfrak{d}(\lambda/\mathcal{E}_{p}(\Omega))<1$ such that $\|u\|_{L^p(\Omega)}\ge \mathfrak{d}$. Moreover, the constant $\mathfrak{d}$ is such that we have 
\[
\lim_{\mathcal{E}_{p}(\Omega)\to+\infty} \mathfrak{d}=1.
\]
\end{lm}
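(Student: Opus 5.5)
The argument splits the domain into a large ball and its exterior by a cut-off, and uses the Poincaré inequality at infinity on the exterior piece. Set $\mathcal{E}=\mathcal{E}_p(\Omega)$ and fix an auxiliary $\mu$ with $\lambda<\mu<\mathcal{E}$. If $\mathcal{E}=+\infty$, $\mu$ is an arbitrary large number. By definition of $\mathcal{E}_p$ there is $R>0$ such that
\[
\lambda_p(\Omega\setminus\overline{B_R})\ge \mu.
\]
Take a cut-off $\eta\in C^\infty(\mathbb{R}^N)$ with $0\le\eta\le1$, $\eta\equiv0$ on $B_{R}$, $\eta\equiv1$ outside $B_{R'}$ for some $R'>R$ (say $R'=2R$, or $R'=R+L$ with $L$ large), and $|\nabla\eta|\le C/(R'-R)$. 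Then $\eta\,u_n\in W^{1,p}_0(\Omega\setminus\overline{B_R})$. The Poincaré inequality for this set, together with the convexity splitting $|a+b|^p\le (1-t)^{1-p}|a|^p+t^{1-p}|b|^p$ for $t\in(0,1)$, gives
\[
\mu\int_\Omega \eta^p|u_n|^p\,dx\le\int_\Omega|\nabla(\eta u_n)|^p\,dx\le (1-t)^{1-p}\int_\Omega |\nabla u_n|^p\,dx+t^{1-p}\Big(\frac{C}{R'-R}\Big)^p\int_{\Omega\cap B_{R'}}|u_n|^p\,dx .
\]

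Next I control the left-hand side from below using the normalisation $\|u_n\|_{L^p}=1$:
\[
\int_\Omega \eta^p|u_n|^p\,dx\ge 1-\int_{\Omega\cap B_{R'}}|u_n|^p\,dx .
\]
Insert this and the bound $\int_\Omega|\nabla u_n|^p\le\lambda$ for $n\ge n_0$. Then pass to the limit $n\to\infty$, using Lemma \ref{lm:mizuno}, which gives $\int_{\Omega\cap B_{R'}}|u_n|^p\to\int_{\Omega\cap B_{R'}}|u|^p\le\|u\|_{L^p(\Omega)}^p$. This yields
\[
\mu\big(1-\|u\|^p_{L^p}\big)\le (1-t)^{1-p}\lambda+t^{1-p}\Big(\frac{C}{R'-R}\Big)^p\|u\|^p_{L^p},
\]
and hence
\[
\|u\|_{L^p}^p\ge \frac{\mu-(1-t)^{1-p}\lambda}{\mu+t^{1-p}C^p(R'-R)^{-p}} .
\]

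The only real obstacle is that this lower bound must depend only on the ratio $\lambda/\mathcal{E}$, and not on $R$. The numerator is scale invariant once $t$ is chosen small enough that $(1-t)^{1-p}\lambda<\mu$. The denominator contains the length scale $R'-R$, and here the freedom in $R'$ does the work. Since $\lambda_p(\Omega\setminus\overline{B_R})$ is nondecreasing in $R$, the choice of $R$ only matters through the Poincaré bound on the annulus complement, and $R'$ can then be taken arbitrarily large. Letting $R'\to\infty$ for fixed $R$ kills the term $t^{1-p}C^p(R'-R)^{-p}$ completely, because $\eta u_n$ still vanishes on $B_R$. We obtain
\[
\|u\|_{L^p}^p\ge 1-\frac{(1-t)^{1-p}\lambda}{\mu},
\]
and then letting $\mu\uparrow\mathcal{E}$ and $t\downarrow0$ gives $\|u\|^p_{L^p}\ge 1-\lambda/\mathcal{E}$. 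So one may take $\mathfrak{d}=(1-\lambda/\mathcal{E})^{1/p}\in(0,1)$. This is positive since $\lambda<\mathcal{E}$, it is at most $1$ (strictly below $1$ when $\lambda>0$; if $\lambda\le 0$ one can shrink it slightly), and it tends to $1$ as $\mathcal{E}\to+\infty$ for fixed $\lambda$.

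I would double-check the exchange of limits, namely $n\to\infty$ first and then $R'\to\infty$. This is legitimate, since the estimate holds for each fixed $R'$ after the limit in $n$. If the monotone dependence on $R'$ causes any subtlety, I would fall back on the version with $R'=2R$ and use that $R$ can also be enlarged freely.
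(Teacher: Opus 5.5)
Your proof is correct. The paper itself does not prove this lemma; it only refers to \cite[Lemma A.1]{BraBriPri_periodic} for ``the elementary proof'', so there is no in-paper argument to compare against.

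Your argument is a complete, self-contained proof. It cuts off away from a large ball, applies the Poincar\'e inequality on $\Omega\setminus\overline{B_R}$ with the convexity splitting, and passes to the limit with the local strong convergence of Lemma \ref{lm:mizuno}. It also yields the explicit constant $\mathfrak{d}=(1-\lambda/\mathcal{E}_p(\Omega))^{1/p}$, which depends only on the ratio and tends to $1$ as $\mathcal{E}_p(\Omega)\to+\infty$. This constant is essentially optimal, as one sees by splitting the mass between a fixed profile and a bump escaping to infinity.

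The exchange of limits you were worried about is harmless. After $n\to\infty$ the inequality no longer involves $n$, and $R'>R$ is a free parameter, since $\eta\,u_n$ vanishes on $B_R$ whatever $R'$ is. So the fallback with $R'=2R$ is unnecessary.

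Two small details are worth tidying.
\begin{itemize}
\item To get $\eta\,u_n\in W^{1,p}_0(\Omega\setminus\overline{B_R})$ without any approximation argument near $\partial B_R$, let $\eta$ vanish on a slightly larger ball $B_{R+\delta}$. Then $\eta\,\phi_k$ has compact support in $\Omega\setminus\overline{B_R}$ for $\phi_k\in C^\infty_0(\Omega)$.
\item The case $\lambda\le 0$ is vacuous, since $\|u_n\|_{L^p(\Omega)}=1$ and $u_n\in W^{1,p}_0(\Omega)$ force $\int_\Omega|\nabla u_n|^p\,dx>0$. Hence $\mu>0$, and this is exactly what makes your replacement of $\int_{\Omega\cap B_{R'}}|u|^p\,dx$ by $\|u\|_{L^p(\Omega)}^p$ on the left-hand side go in the right direction.
\end{itemize}
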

The next pair of results concern the behavior of the gradients for a constrained Palais-Smale sequence. We start from the singular case.
\begin{prop}
\label{prop:1}
Let $1<p<\infty$ and let $\Omega\subseteq\mathbb{R}^N$ be an open set. Let $\{u_n\}_{n\in\mathbb{N}}\subseteq W^{1,p}_0(\Omega)$ be a singular constrained Palais-Smale sequence at the level $\lambda$. Then we have
\[
\lim_{n\to\infty} \|\nabla u_n\|_{L^p(\Omega\cap B_R)}=0,\qquad \text{for every}\ R>0,
\]
as well.
\end{prop}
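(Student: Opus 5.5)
The idea is to test the Palais-Smale condition (iii) against a localized copy of $u_n$. Fix $R>0$ and a cut-off $\eta\in C^\infty_0(B_{2R})$ with $0\le\eta\le 1$, $\eta\equiv 1$ on $B_R$ and $|\nabla\eta|\le 2/R$. Then $\varphi_n=\eta^p\,u_n$ belongs to $W^{1,p}_0(\Omega)$. Moreover, since $\{u_n\}_{n\in\mathbb{N}}$ is bounded in $W^{1,p}_0(\Omega)$ by (i) and (ii), the sequence $\{\varphi_n\}_{n\in\mathbb{N}}$ is bounded there as well, with $\|\varphi_n\|_{W^{1,p}}\le C(R)$. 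By (iii) we get
\[
\left|\int_\Omega \langle |\nabla u_n|^{p-2}\nabla u_n,\nabla\varphi_n\rangle\,dx-\lambda\int_\Omega \eta^p\,|u_n|^p\,dx\right|\le C(R)\,\Big\|-\Delta_p u_n-\lambda\,|u_n|^{p-2}u_n\Big\|_{W^{-1,p'}(\Omega)}\to 0 .
\]

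Next expand $\nabla\varphi_n=\eta^p\nabla u_n+p\,\eta^{p-1}u_n\nabla\eta$. This gives
\[
\int_\Omega \eta^p|\nabla u_n|^p\,dx\le \lambda\int_{\Omega\cap B_{2R}}|u_n|^p\,dx+p\int_\Omega \eta^{p-1}|\nabla u_n|^{p-1}|u_n|\,|\nabla\eta|\,dx+o(1).
\]
The first term on the right tends to $0$ because the sequence is singular (use the definition with radius $2R$). For the second term, apply H\"older with exponents $p'$ and $p$:
\[
\int_\Omega \eta^{p-1}|\nabla u_n|^{p-1}|u_n|\,|\nabla\eta|\,dx\le \frac{2}{R}\,\|\nabla u_n\|_{L^p(\Omega)}^{p-1}\,\|u_n\|_{L^p(\Omega\cap B_{2R})}.
\]
This tends to $0$, again by the uniform gradient bound from (ii) and by singularity.

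Hence $\int_{\Omega\cap B_R}|\nabla u_n|^p\le\int_\Omega\eta^p|\nabla u_n|^p\to0$. Since $R$ is arbitrary, the claim follows.

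There is no real obstacle here. The only points needing care are that $\eta^p u_n$ is an admissible test function with a uniform bound in $W^{1,p}_0(\Omega)$, which is standard via the product rule, and that $\lambda$ may be of either sign. For the latter we simply bound the term by its absolute value, so no sign of $\lambda$ is needed.
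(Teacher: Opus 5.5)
Your proof is correct and follows essentially the same route as the paper. You test condition (iii) with a cut-off times $u_n$, then bound the term containing $\nabla\eta$ by H\"older's inequality, using the uniform gradient bound and the local $L^p$ vanishing on $B_{2R}$. The only difference is that you use $\eta^p\,u_n$ where the paper uses $\eta\,u_n$, which is immaterial here.
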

\begin{proof}
By definition, we already know that 
\begin{equation}
\label{Lp}
\lim_{n\to\infty} \|u_n\|_{L^p(\Omega\cap B_R)}=0,\qquad \text{for every}\ R>0.
\end{equation}
We need to prove the same property for the sequence $\{\nabla u_n\}_{n\in\mathbb{N}}$. We fix $R>0$ and we take $\eta\in C^\infty_0(B_{2R})$ such that 
\[
0\le \eta\le 1,\qquad \eta\equiv 1\ \text{on}\ B_R,\qquad |\nabla \eta|\le \frac{C}{R},
\]
for some $C>0$.
We thus obtain
\[
\begin{split}
\int_{\Omega\cap B_R} |\nabla u_n|^p\,dx\le \int_{\Omega} |\nabla u_n|^p\,\eta\,dx&=\int_{\Omega} \langle |\nabla u_n|^{p-2}\,\nabla u_n,\nabla(u_n\,\eta)\rangle\,dx\\
&-\int_{\Omega} \langle |\nabla u_n|^{p-2}\,\nabla u_n,\nabla\eta\rangle\,u_n\,dx\\
&=\left\langle-\Delta_p u_n-\lambda\,|u_n|^{p-2}\,u_n,u_n\,\eta\right\rangle_{(W^{-1,p'}(\Omega),W^{1,p}_0(\Omega))}\\
&-\int_{\Omega} \langle |\nabla u_n|^{p-2}\,\nabla u_n,\nabla\eta\rangle\,u_n\,dx\\
&+\lambda\,\int_\Omega |u_n|^p\,\eta\,dx.
\end{split}
\]
In particular, we get
\begin{equation}
\label{camado}
\begin{split}
\int_{\Omega\cap B_R} |\nabla u_n|^p\,dx&\le \Big\|-\Delta_p u_n-\lambda\,|u_n|^{p-2}\,u_n\Big\|_{W^{-1,p'}(\Omega)}\,\|u_n\,\eta\|_{W^{1,p}(\Omega)}\\
&+\frac{C}{R}\,\|\nabla u_n\|_{L^p(\Omega)}^{p-1}\,\|u_n\|_{L^p(\Omega\cap B_{2R})}+\lambda\,\|u_n\|_{L^p(\Omega\cap B_{2R})}^p. 
\end{split}
\end{equation}
Observe that
\[
\begin{split}
\|u_n\,\eta\|_{W^{1,p}(\Omega)}&\le \|u_n\,\eta\|_{L^p(\Omega)}+\|u_n\,\nabla\eta+\eta\,\nabla u_n\|_{L^p(\Omega)}\\
&\le 1+\frac{C}{R}+\|\nabla u_n\|_{L^p(\Omega)},
\end{split}
\]
which is uniformly bounded in $n$, by assumption. By using this fact, \eqref{Lp} and the definition of Palais-Smale sequence, we get the conclusion from \eqref{camado}.
\end{proof}
In the regular case, we have a similar result. This reads as follows.
\begin{prop}
\label{prop:2}
Let $1<p<\infty$ and let $\Omega\subseteq\mathbb{R}^N$ be an open set. Let $\{u_n\}_{n\in\mathbb{N}}\subseteq W^{1,p}_0(\Omega)$ be a regular constrained Palais-Smale sequence at the level $\lambda$. Let $\{u_{n_k}\}_{k\in\mathbb{N}}\subseteq \{u_n\}_{n\in\mathbb{N}}$ and $u\in W^{1,p}_0(\Omega)\setminus\{0\}$ be such that 
\[
\lim_{k\to\infty} \|u_{n_k}-u\|_{L^p(\Omega\cap B_R)}=0,\qquad \text{for every}\ R>0,
\]
then we also have
\[
\lim_{k\to\infty} \|\nabla u_{n_k}-\nabla u\|_{L^p(\Omega\cap B_R)}=0,\qquad \text{for every}\ R>0.
\]
\end{prop}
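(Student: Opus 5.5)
We follow the localization argument of Proposition \ref{prop:1}, now testing the Palais-Smale condition against $(u_{n_k}-u)\,\eta$ and using the strict monotonicity of the vector field $z\mapsto |z|^{p-2}z$. Fix $R>0$ and take the same cut-off $\eta\in C^\infty_0(B_{2R})$, with $0\le\eta\le 1$, $\eta\equiv1$ on $B_R$ and $|\nabla\eta|\le C/R$. Since the sequence is bounded in $W^{1,p}_0(\Omega)$, we may pass to a further subsequence and assume $u_{n_k}\rightharpoonup v$ weakly in $W^{1,p}(\Omega)$. By Lemma \ref{lm:mizuno}, $u_{n_k}\to v$ in $L^p(\Omega\cap B_R)$ for every $R$, hence $v=u$. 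Consequently, the whole subsequence $\{u_{n_k}\}$ converges weakly to $u$ in $W^{1,p}$. This will be used in the last step, and a final subsequence-of-subsequence argument will remove the extra extraction.

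The key quantity is
\[
I_k:=\int_\Omega \big\langle |\nabla u_{n_k}|^{p-2}\nabla u_{n_k}-|\nabla u|^{p-2}\nabla u,\ \nabla u_{n_k}-\nabla u\big\rangle\,\eta\,dx\ \ge 0 .
\]
Write $\nabla(u_{n_k}-u)\,\eta=\nabla\big((u_{n_k}-u)\eta\big)-(u_{n_k}-u)\nabla\eta$, and split $I_k$ into two groups of terms.

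\emph{The $u_{n_k}$ part.} Testing condition (iii) with $\varphi_k=(u_{n_k}-u)\eta$ gives
\[
\int_\Omega\langle |\nabla u_{n_k}|^{p-2}\nabla u_{n_k},\nabla\varphi_k\rangle\,dx=\big\langle -\Delta_p u_{n_k}-\lambda|u_{n_k}|^{p-2}u_{n_k},\varphi_k\big\rangle+\lambda\int_\Omega|u_{n_k}|^{p-2}u_{n_k}\,\varphi_k\,dx .
\]
Since $\|\varphi_k\|_{W^{1,p}}$ is uniformly bounded, the duality term vanishes in the limit. The last integral is bounded, by H\"older, by $\lambda\,\|u_{n_k}\|^{p-1}_{L^p}\,\|u_{n_k}-u\|_{L^p(\Omega\cap B_{2R})}\to0$. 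The error term $\int\langle|\nabla u_{n_k}|^{p-2}\nabla u_{n_k},\nabla\eta\rangle (u_{n_k}-u)\,dx$ is bounded by $\frac{C}{R}\|\nabla u_{n_k}\|^{p-1}_{L^p}\|u_{n_k}-u\|_{L^p(\Omega\cap B_{2R})}\to0$.

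\emph{The $u$ part.} The term $\int\langle|\nabla u|^{p-2}\nabla u,\nabla(u_{n_k}-u)\rangle\eta\,dx$ tends to zero by the weak convergence $\nabla u_{n_k}\rightharpoonup\nabla u$ in $L^p$, since $|\nabla u|^{p-2}\nabla u\,\eta\in L^{p'}$.

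Together these give $I_k\to0$. The main obstacle is to pass from $I_k\to0$ to strong $L^p$ convergence of the gradients on $B_R$. For $p\ge2$ this is immediate from the inequality $\langle|a|^{p-2}a-|b|^{p-2}b,a-b\rangle\ge c_p|a-b|^p$, since $\eta\equiv 1$ on $B_R$.

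For $1<p<2$ we use instead
\[
\langle|a|^{p-2}a-|b|^{p-2}b,a-b\rangle\ge c_p\,|a-b|^2\,(|a|+|b|)^{p-2}.
\]
Applying H\"older with exponents $2/p$ and $2/(2-p)$, we get
\[
\int_{\Omega\cap B_R}|\nabla u_{n_k}-\nabla u|^p\,dx\le C\,I_k^{p/2}\Big(\int_\Omega(|\nabla u_{n_k}|+|\nabla u|)^p\,dx\Big)^{(2-p)/2}\to0,
\]
using the uniform $W^{1,p}$ bound. Alternatively, one could argue via the Minty--Browder type argument giving a.e. convergence of the gradients together with Vitali's theorem, but the algebraic inequalities are more direct.

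Finally, since every subsequence of $\{u_{n_k}\}$ admits a further subsequence along which the conclusion holds with the same limit $u$, the conclusion holds for the whole subsequence $\{u_{n_k}\}$.
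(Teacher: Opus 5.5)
Your proposal is correct and follows the paper's proof essentially step by step. You identify the weak limit as $u$, test the Palais--Smale condition with $(u_{n_k}-u)\,\eta$, handle the $|\nabla u|^{p-2}\nabla u$ term by weak convergence of the gradients, and conclude by monotonicity. The only differences are minor: you identify the weak limit through Lemma \ref{lm:mizuno} (uniqueness of local $L^p$ limits) rather than through the distributional gradient and uniqueness in $W^{1,p}_0(\Omega)$, and you spell out the monotonicity inequalities, including the H\"older step for $1<p<2$, which the paper leaves as ``standard''.
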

\begin{proof}
We preliminary prove that we have weak convergence of the gradients. Indeed, 
observe that $\{u_{n_k}\}_{k\in\mathbb{N}}$ is in particular a bounded sequence in $W^{1,p}_0(\Omega)$. Thus, it converges weakly in $W^{1,p}(\Omega)$, up to a further subsequence. Let us call $v$ the weak limit and observe that $v\in W^{1,p}_0(\Omega)$, since the latter is a weakly closed space. Let us take $\phi\in C^\infty_0(\Omega;\mathbb{R}^N)$, in particular we also have that $\phi\in C^{\infty}_0(\Omega\cap B_R)$, for $R>0$ large enough (depending on $\phi$). 
Thus, by using the weak convergence in $W^{1,p}(\Omega)$, the definition of weak gradient and the strong local convergence in $L^p$, we have
\[
\int_\Omega \langle \nabla v,\phi\rangle\,dx=\lim_{k\to\infty} \int_\Omega \langle \nabla u_{n_k},\phi\rangle\,dx=-\lim_{k\to\infty} \int_\Omega u_{n_k}\,\mathrm{div}\phi\,dx=-\int_\Omega u\,\mathrm{div}\phi\,dx.
\]
This implies that 
\[
\nabla v=\nabla u,\qquad \text{a.\,e. in}\ \Omega.
\]
Since $v-u\in W^{1,p}_0(\Omega)$, the previous property implies that $v=u$ (see for example \cite[Proposition 3.7.11]{BraBook}). We have thus identified the weak limit. 
\par
Observe that the previous argument can be repeated for any subsequence, always getting the given function $u$ in the limit. By a standard argument, this shows that we actually have weak convergence of the initial subsequence $\{\nabla u_{n_k}\}_{k\in\mathbb{N}}$, without the necessity to pass to a further subsequence.
\vskip.2cm\noindent
We now come to the proof of the local strong $L^p$ convergence of the gradients. Let us set, for ease of notation
\[
\theta_{k}:=\Big\|-\Delta_p u_{n_k}-\lambda\,|u_{n_k}|^{p-2}\,u_{n_k}\Big\|_{W^{-1,p'}(\Omega)}.
\]
Also, for every  $R>0$, we take $\eta\in C^{\infty}_0(B_{2R})$ the same cut-off function of the previous proof. By using that $(u_{n_k}-u)\,\eta\in W^{1,p}_0(\Omega)$ we can write
\[
\begin{split}
\int_\Omega \langle |\nabla u_{n_k}|^{p-2}\,\nabla u_{n_k},\nabla (u_{n_k}-u)\rangle\,\eta\,dx&=\left\langle -\Delta_p u_{n_k}-\lambda\,|u_{n_k}|^{p-2}\,u_{n_k},\, (u_{n_k}-u)\,\eta\right\rangle_{(W^{-1,p'}(\Omega), W^{1,p}_0(\Omega))}\\
&-\lambda\,\int_\Omega |u_{n_k}|^{p-2}\,u_{n_k}\,(u_{n_k}-u)\,\eta\,dx\\
&-\int_\Omega \langle |\nabla u_{n_k}|^{p-2}\,\nabla u_{n_k},\nabla \eta\rangle\,(u_{n_k}-u)\,dx.
\end{split}
\]
Each term on the right-hand side of the previous inequality is comfortably estimated using H\"older inequality and the properties of $\eta$. Namely, we have
\[
\left|\left\langle -\Delta_p u_{n_k}-\lambda\,|u_{n_k}|^{p-2}\,u_{n_k},\, (u_{n_k}-u)\,\eta\right\rangle_{(W^{-1,p'}(\Omega), W^{1,p}_0(\Omega))}\right|\le  \theta_k \, \|\eta\, (u_{n_k}-u)\|_{W^{1,p}(\Omega)},
\]
\[
\left|\lambda\,\int_\Omega |u_{n_k}|^{p-2}\,u_{n_k}\,(u_{n_k}-u)\,\eta\, dx\right|\le  \|u_{n_k}-u\|_{L^{p}(\Omega\cap B_{2R})},
\]
and
\[
\left|\int_\Omega \langle |\nabla u_{n_k}|^{p-2}\,\nabla u_{n_k},\nabla \eta\rangle\,(u_{n_k}-u)\,dx\right|\le \frac{C}{R}\, \|\nabla u_{n_k}\|^{p-1}_{L^p(\Omega)}\,\|u_{n_k}- u\|_{L^{p}(\Omega\cap B_{2R})}.
\]
In particular, thanks to the strong local convergence in $L^p$ and being $\{\theta_k\}_{k\in\mathbb{N}}$ infinitesimal, we obtain that
\[
\lim_{k\to\infty}\int_\Omega \langle |\nabla u_{n_k}|^{p-2}\,\nabla u_{n_k},\nabla (u_{n_k}-u)\rangle\,\eta\,dx=0.
\]
Moreover, the weak convergence in $L^p(\Omega)$ of the sequence $\{\nabla u_{n_k}\}_{k\in\mathbb{N}}$ entails that
\[
\lim_{n\to\infty}\int_\Omega \langle |\nabla u|^{p-2}\,\nabla u,\nabla (u_{n_k}-u)\rangle\,\eta\,dx=0.
\]
Therefore, by combining the last two limits we can conclude
\[
\lim_{n\to\infty}\int_\Omega \langle |\nabla u_{n_k}|^{p-2}\,\nabla u_{n_k}-|\nabla u|^{p-2}\,\nabla u,\nabla (u_{n_k}-u)\rangle\,\eta\,dx=0,
\]
as well. By standard monotonicity inequalities for the convex power $z\mapsto |z|^p$, we deduce the claimed result.
\end{proof}
We conclude this section by recording the following interesting consequence of Proposition \ref{prop:2}
\begin{coro}
Let $1<p<\infty$ and let $\Omega\subseteq\mathbb{R}^N$ be an open set. Then
\[
\lambda\in \mathfrak{S}_{\rm eigen,p}(\Omega),
\]
if and only if there exists a regular constrained Palais-Smale sequence $\{u_n\}_{n\in\mathbb{N}}\subseteq W^{1,p}_0(\Omega)$ at the level $\lambda$.
\end{coro}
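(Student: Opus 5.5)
The corollary is an equivalence, and I would prove the two implications separately; Proposition \ref{prop:2} does the real work in the nontrivial one.

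\emph{Eigenvalue implies a regular sequence.} Let $\lambda\in\mathfrak{S}_{{\rm eigen},p}(\Omega)$ with eigenfunction $u\in W^{1,p}_0(\Omega)\setminus\{0\}$. By the $(p-1)$-homogeneity of both sides of the equation, the normalized function $v=u/\|u\|_{L^p(\Omega)}$ is still an eigenfunction. Testing the weak equation with $v$ gives
\[
\int_\Omega |\nabla v|^p\,dx=\lambda.
\]
The constant sequence $u_n=v$ therefore satisfies (i), (ii) and (iii) of Definition \ref{defi:PS}, the residual in (iii) being identically zero. It is regular, since it converges locally in $L^p$ (trivially) to $v\neq 0$.

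\emph{A regular sequence implies an eigenvalue.} Let $\{u_n\}_{n\in\mathbb{N}}$ be a regular constrained Palais-Smale sequence at level $\lambda$, with a subsequence $\{u_{n_k}\}_{k\in\mathbb{N}}$ converging to $u\neq 0$ in $L^p(\Omega\cap B_R)$ for every $R>0$. Fix $\varphi\in C^\infty_0(\Omega)$ and choose $R$ with $\mathrm{supp}\,\varphi\subseteq B_R$. Condition (iii) of Definition \ref{defi:PS} gives
\[
\int_\Omega \langle |\nabla u_{n_k}|^{p-2}\nabla u_{n_k},\nabla\varphi\rangle\,dx-\lambda\int_\Omega |u_{n_k}|^{p-2}u_{n_k}\,\varphi\,dx\longrightarrow 0,
\]
because the left-hand side is bounded by the $W^{-1,p'}$ norm of the residual times $\|\varphi\|_{W^{1,p}}$.

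To pass to the limit in both integrals I would use the same elementary fact twice: the map $w\mapsto |w|^{p-2}w$ is continuous from $L^p(K)$ to $L^{p'}(K)$ for any bounded set $K$. It applies to the second integral through the local convergence $u_{n_k}\to u$ in $L^p(\Omega\cap B_R)$. It applies to the first integral because Proposition \ref{prop:2} upgrades the hypothesis to local strong convergence of gradients, $\nabla u_{n_k}\to\nabla u$ in $L^p(\Omega\cap B_R)$. This upgrade is the only non-routine step, and Proposition \ref{prop:2} already supplies it, so I expect no genuine obstacle here.

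Taking the limit then yields the weak eigenvalue equation for $u$ against every $\varphi\in C^\infty_0(\Omega)$. Both sides are continuous linear functionals of $\varphi$ in the $W^{1,p}$ norm, since $|\nabla u|^{p-2}\nabla u\in L^{p'}$ and $|u|^{p-2}u\in L^{p'}$. So the equation extends by density to all $\varphi\in W^{1,p}_0(\Omega)$, matching the definition of eigenvalue. Since $u\in W^{1,p}_0(\Omega)\setminus\{0\}$, this shows $\lambda\in\mathfrak{S}_{{\rm eigen},p}(\Omega)$.
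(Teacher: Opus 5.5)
Your proposal is correct and follows essentially the same route as the paper: the normalized eigenfunction as a constant sequence for the forward direction, and for the converse, Proposition \ref{prop:2} to upgrade to local strong convergence of the gradients so that you can pass to the limit in the Palais-Smale residual tested against a compactly supported $\varphi$. Your density argument extending the weak equation from $C^\infty_0(\Omega)$ to all of $W^{1,p}_0(\Omega)$ is a step the paper leaves implicit, and you handle it correctly.
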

\begin{proof}
If $\lambda$ is an eigenvalue, then by definition it admits an associated eigenfunction $u\in W^{1,p}_0(\Omega)\setminus\{0\}$. Accordingly, if we define the constant sequence
\[
u_n=\frac{u}{\|u\|_{L^p(\Omega)}},\qquad \text{for every}\ n\in\mathbb{N},
\]
this turns out to be a regular constrained Palais-Smale sequence at the level $\lambda$.
\vskip.2cm\noindent
We prove the converse implication.
By definition and Proposition \ref{prop:2}, there exists a subsequence $\{u_{n_k}\}_{k\in\mathbb{N}}\subseteq\{u_n\}_{n\in\mathbb{N}}$ and a function $u\in W^{1,p}_0(\Omega)\setminus\{0\}$ such that
\begin{equation}
\label{azzenzero}
\lim_{k\to\infty} \|u_{n_k}-u\|_{L^p(\Omega\cap B_R)}=\lim_{k\to\infty} \|\nabla u_{n_k}-\nabla u\|_{L^p(\Omega\cap B_R)}=0,\qquad \text{for every}\ R>0.
\end{equation}
Moreover, by definition of Palais-Smale sequence we have in particular
\begin{equation}
\label{azzozzone}
\lim_{k\to\infty} \left[\int_\Omega \langle |\nabla u_{n_k}|^{p-2}\,\nabla u_{n_k},\nabla \varphi\rangle\,dx-\lambda\,\int_\Omega |u_{n_k}|^{p-2}\,u_{n_k}\,\varphi\,dx\right]=0,\qquad \text{for every}\ \varphi\in C^\infty_0(\Omega).
\end{equation}
We fix $\psi\in C^\infty_0(\Omega)$, since it has compact support there exists $R_0>0$ large enough so that $\psi\in C^\infty_0(\Omega\cap B_{R_0})$, as well. Thus, by \eqref{azzenzero} we can pass to the limit in \eqref{azzozzone} tested with $\psi$. We obtain
\[
\int_\Omega \langle |\nabla u|^{p-2}\,\nabla u,\nabla \psi\rangle\,dx=\lambda\,\int_\Omega |u|^{p-2}\,u\,\psi\,dx.
\] 
Since $u\not\equiv 0$ and $\psi$ is arbitrary, we finally get that $\lambda$ is an eigenvalue.
\end{proof}

\section{Proof of the Main Theorem}
\label{sec:3}

The result of the Main Theorem will be obtained by joining the next two lemmas, proving that
\[
\mathcal{E}_p(\Omega)\le \inf\mathfrak{S}_{{\rm ess},p}(\Omega)\qquad \text{and}\qquad \mathcal{E}_p(\Omega)\ge \inf\mathfrak{S}_{{\rm ess},p}(\Omega),
\]
respectively. In proving the second inequality, we will also show that $\mathcal{E}_p(\Omega)\in \mathfrak{S}_{{\rm ess},p}(\Omega)$, whenever this constant is finite.
\begin{lm}
\label{lm:1}
Let $1<p<\infty$ and let $\Omega\subseteq\mathbb{R}^N$ be an open set.
Then we have
\[
\mathcal{E}_p(\Omega)\le \inf \mathfrak{S}_{{\rm ess},p}(\Omega).
\]
\end{lm}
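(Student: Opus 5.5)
If $\mathfrak{S}_{{\rm ess},p}(\Omega)=\emptyset$ the right-hand side is $+\infty$ and there is nothing to prove. Otherwise we fix $\lambda\in\mathfrak{S}_{{\rm ess},p}(\Omega)$ and a singular constrained Palais-Smale sequence $\{u_n\}_{n\in\mathbb{N}}$ at the level $\lambda$. The plan is to show that $\lambda\ge\lambda_p(\Omega\setminus\overline{B_R})$ for every $R>0$; taking the supremum in $R$ then gives $\lambda\ge\mathcal{E}_p(\Omega)$.

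The idea is to cut off the part of $u_n$ near the origin. We fix $R>0$ and a cut-off $\chi\in C^\infty(\mathbb{R}^N)$ with
\[
0\le\chi\le1,\qquad \chi\equiv0\ \text{on}\ B_{R+1},\qquad \chi\equiv1\ \text{outside}\ B_{R+2},\qquad |\nabla\chi|\le 2.
\]
We then set $v_n=\chi\,u_n$. Since $\chi$ is smooth and bounded with bounded gradient and vanishes near $\overline{B_R}$, we have $v_n\in W^{1,p}_0(\Omega\setminus\overline{B_R})$. This uses the density of $C^\infty_0(\Omega)$ in $W^{1,p}_0(\Omega)$: multiplying approximants by $\chi$ gives functions compactly supported in $\Omega\setminus\overline{B_R}$. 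By density again, $\lambda_p(\Omega\setminus\overline{B_R})$ can equivalently be computed on $W^{1,p}_0(\Omega\setminus\overline{B_R})$. Hence, whenever $v_n\ne0$,
\[
\lambda_p(\Omega\setminus\overline{B_R})\,\|v_n\|^p_{L^p(\Omega)}\le\int_\Omega|\nabla v_n|^p\,dx .
\]

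It remains to pass to the limit on both sides.
\begin{itemize}
\item For the $L^p$ norm, since $\chi\equiv1$ outside $B_{R+2}$, we have $\|v_n\|_{L^p}\ge \big(1-\|u_n\|^p_{L^p(\Omega\cap B_{R+2})}\big)^{1/p}$, which tends to $1$ by singularity. Similarly $\|v_n\|_{L^p}\le1$.
\item For the gradients, $\nabla v_n=\chi\nabla u_n+u_n\nabla\chi$, so by Minkowski
\[
\|\nabla v_n\|_{L^p(\Omega)}\le\|\nabla u_n\|_{L^p(\Omega)}+2\,\|u_n\|_{L^p(\Omega\cap B_{R+2})}.
\]
The last term vanishes by singularity, and the first term tends to $\lambda^{1/p}$ by property (ii) of Definition \ref{defi:PS}.
\end{itemize}
Letting $n\to\infty$ therefore gives $\lambda_p(\Omega\setminus\overline{B_R})\le\lambda$, as claimed.

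This direction involves no real obstacle. Notably, property (iii) and Proposition \ref{prop:1} are not even needed, since the crude Minkowski bound suffices. The only mildly delicate point is the membership $v_n\in W^{1,p}_0(\Omega\setminus\overline{B_R})$, which is handled by the cut-off vanishing on the neighbourhood $B_{R+1}$ of $\overline{B_R}$ together with density.
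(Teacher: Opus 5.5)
Your proof is correct, but it takes a different route from the paper's. The paper argues by contraposition. Given a constrained Palais-Smale sequence at a level $\lambda<\mathcal{E}_p(\Omega)$, it extracts a weakly convergent subsequence and applies the compactness Lemma \ref{lm:lemmadelca}, imported from earlier work, to get a nontrivial weak limit $u$ with $\|u\|_{L^p(\Omega)}\ge\mathfrak{d}>0$. It then uses Lemma \ref{lm:mizuno} to conclude that the sequence is regular, hence not singular.

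You instead start from a singular sequence and plug the truncation $\chi\,u_n$ directly into the definition of $\lambda_p(\Omega\setminus\overline{B_R})$. The vanishing on $B_{R+2}$ controls both the loss of mass and the error term $u_n\,\nabla\chi$. Your argument is self-contained and elementary: it needs no weak compactness and no external lemma, and, as you observe, it uses only properties (i), (ii) and the local vanishing. The same cut-off mechanism is presumably what drives Lemma \ref{lm:lemmadelca}; you are essentially inlining it in the simplest case, where the weak limit is already known to be zero.

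The paper's route has two advantages in exchange. It recycles a lemma it needs anyway in \emph{Step 4} of Lemma \ref{lm:2}. It also phrases the result directly as ``every Palais-Smale sequence below $\mathcal{E}_p(\Omega)$ is regular'', with a quantitative lower bound on the mass of the limit; by the Corollary of Section \ref{sec:2}, such levels are then eigenvalues. From your argument this conclusion follows only after combining it with the dichotomy of Proposition \ref{prop:urinason}.
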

\begin{proof}
It is sufficient to prove that every constrained Palais-Smale sequence at the level $\lambda<\mathcal{E}_p(\Omega)$ is regular. Let $\{u_n\}_{n\in\mathbb{N}}$ be such a sequence: in particular, it is bounded in $W^{1,p}_0(\Omega)$. Thus, there exists a subsequence $\{u_{n_k}\}_{k\in\mathbb{N}}\subseteq \{u_n\}_{n\in\mathbb{N}}$ and a function $u\in W^{1,p}_0(\Omega)$, such that $\{u_{n_k}\}_{k\in\mathbb{N}}$ weakly converges in $W^{1,p}(\Omega)$ to $u$. By using Lemma \ref{lm:lemmadelca} for this subsequence, we have that $u\not\equiv 0$. We can finally apply Lemma \ref{lm:mizuno} to $\{u_{n_k}\}_{k\in\mathbb{N}}$ and conclude that $\{u_n\}_{n\in\mathbb{N}}$ is regular.
\end{proof}

\begin{lm}
\label{lm:2}
Let $1<p<\infty$ and let $\Omega\subseteq\mathbb{R}^N$ be an open set.
Then we have
\begin{equation}
\label{lm2.3.0}
\mathcal{E}_p(\Omega)\ge \inf \mathfrak{S}_{{\rm ess},p}(\Omega).
\end{equation}
Moreover, if $\mathcal{E}_p(\Omega)<+\infty$, then $\mathcal{E}_p(\Omega)\in \mathfrak{S}_{{\rm ess},p}(\Omega)$.
\end{lm}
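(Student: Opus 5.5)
If $\mathcal{E}_p(\Omega)=+\infty$ there is nothing to prove, so assume $\mathcal{E}:=\mathcal{E}_p(\Omega)<+\infty$. It is then enough to build a singular constrained Palais-Smale sequence at the level $\mathcal{E}$. I will follow the strategy described in the Introduction. Fix a cut-off $\chi_n\in C^\infty_0(B_{2n})$ with $0\le\chi_n\le1$ and $\chi_n\equiv1$ on $B_n$, a large factor $\Lambda_n\to+\infty$ and a small $\varepsilon_n\to0$, and consider
\[
\mu_n:=\inf\left\{\int_\Omega|\nabla\varphi|^p\,dx+\Lambda_n\int_\Omega\chi_n|\varphi|^p\,dx+\varepsilon_n\int_\Omega|x|\,|\varphi|^p\,dx\,:\,\varphi\in W^{1,p}_0(\Omega),\ \|\varphi\|_{L^p(\Omega)}=1\right\}.
\]
The confining term $\varepsilon_n|x|$ makes minimizing sequences tight in $L^p$: $\int|x||\varphi|^p$ is bounded, so there is no mass at infinity. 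Together with local compactness (Lemma \ref{lm:mizuno}), this gives a minimizer $u_n$, which can be taken nonnegative. It satisfies weakly
\[
-\Delta_p u_n+\Lambda_n\chi_n u_n^{p-1}+\varepsilon_n|x|u_n^{p-1}=\mu_n u_n^{p-1}.
\]

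\emph{Step 1: energy bounds.} Testing with $u_n$ gives $\int_\Omega|\nabla u_n|^p\,dx\le\mu_n$. For the upper bound on $\mu_n$, fix $\delta>0$ and choose $\varphi\in C^\infty_0(\Omega\setminus\overline{B_{2n}})$ with unit $L^p$ norm and $\int|\nabla\varphi|^p\le\lambda_p(\Omega\setminus\overline{B_{2n}})+\delta\le\mathcal{E}+\delta$. Its support lies in some $B_{\rho_n}$, and then
\[
\mu_n\le\mathcal{E}+\delta+\varepsilon_n\rho_n .
\]
Here is the coupling: choose $\varepsilon_n$ after $n$ and $\delta=1/n$, so that $\varepsilon_n\rho_n\le1/n$. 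Then $\limsup\mu_n\le\mathcal{E}$.

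\emph{Step 2: singularity.} From the bound on $\mu_n$, $\Lambda_n\int_{B_n}|u_n|^p\le\mu_n\le\mathcal{E}+1$, so $\|u_n\|_{L^p(\Omega\cap B_R)}\to0$ for every $R$ because $\Lambda_n\to\infty$. For the lower bound on $\mu_n$, fix $R$ and a cut-off $\eta_R$ that vanishes on $B_R$, equals $1$ outside $B_{2R}$, and has $|\nabla\eta_R|\le C/R$. Apply the definition of $\lambda_p(\Omega\setminus\overline{B_R})$ to $\eta_R u_n$. Local vanishing gives $\|\eta_R u_n\|_{L^p}\to1$ and $\|\nabla(\eta_R u_n)\|_{L^p}\le\|\nabla u_n\|_{L^p}+o(1)$. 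Hence $\liminf\int|\nabla u_n|^p\ge\lambda_p(\Omega\setminus\overline{B_R})$, and taking the sup over $R$ gives $\liminf\int|\nabla u_n|^p\ge\mathcal{E}$. Combining with Step 1,
\[
\int|\nabla u_n|^p\to\mathcal{E},\qquad \mu_n\to\mathcal{E},\qquad \Lambda_n\int\chi_n u_n^p\to0,\qquad \varepsilon_n\int|x|u_n^p\to0.
\]

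\emph{Step 3: almost criticality, the main obstacle.} For $\varphi\in W^{1,p}_0(\Omega)$ with $\|\varphi\|_{W^{1,p}}\le1$,
\[
\langle-\Delta_pu_n-\mathcal{E}u_n^{p-1},\varphi\rangle=(\mu_n-\mathcal{E})\int u_n^{p-1}\varphi-\Lambda_n\int\chi_nu_n^{p-1}\varphi-\varepsilon_n\int|x|u_n^{p-1}\varphi .
\]
The first term is harmless. The potential terms are the problem. By Hölder, $\Lambda_n\int\chi_nu_n^{p-1}|\varphi|\le\big(\Lambda_n\int\chi_nu_n^p\big)^{1/p'}\big(\Lambda_n\int\chi_n|\varphi|^p\big)^{1/p}$, and the last factor is not uniformly small. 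The same issue arises with $\varepsilon_n|x|$ for unbounded supports.

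I plan to handle this with the technical result announced for Appendix \ref{sec:A}. The idea is to exploit minimality more cleverly, either by comparing $u_n$ with competitors $(u_n+t\varphi)/\|u_n+t\varphi\|$, or by an Ekeland-type perturbation of $u_n$ on the unperturbed constraint set. Concretely, $u_n$ is an almost minimizer of $\int|\nabla\cdot|^p$ among functions of unit norm that are small in $B_n$. By the Ekeland variational principle, applied on the sphere with the functional restricted to a neighbourhood away from the potential region, one obtains $v_n$ close to $u_n$ in $W^{1,p}$ whose constrained derivative is $o(1)$ in $W^{-1,p'}$.

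Alternatively, I would localize $\varphi$ with the cut-off $1-\chi_{n}$ and choose the scales ($\Lambda_n$ versus $n$, and $\varepsilon_n$ versus $\rho_n$) so that the potential terms contribute $o(1)$. In either version, the resulting sequence is a constrained Palais-Smale sequence at level $\mathcal{E}$. It is singular by Step 2, and this yields \eqref{lm2.3.0} and $\mathcal{E}\in\mathfrak{S}_{{\rm ess},p}(\Omega)$.
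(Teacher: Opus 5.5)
Your Steps 1 and 2 are sound. The cut-off argument giving $\liminf_n\int_\Omega|\nabla u_n|^p\,dx\ge\mathcal{E}_p(\Omega)$ is a valid and more direct substitute for the paper's contradiction argument via Lemma \ref{lm:lemmadelca}. However, Step 3, which is the actual content of the lemma, is not proved.

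You rightly note that the splitting $\Lambda_n\int\chi_nu_n^{p-1}|\varphi|\le(\Lambda_n\int\chi_nu_n^p)^{1/p'}(\Lambda_n\int\chi_n|\varphi|^p)^{1/p}$ is useless. Neither remedy you sketch works as stated.

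\emph{Ekeland route.} Ekeland's principle on the sphere $\{\|\varphi\|_{L^p(\Omega)}=1\}$ for $\int|\nabla\varphi|^p$ needs an almost minimizer, and $u_n$ is not one: the infimum there is $\lambda_p(\Omega)$, possibly $<\mathcal{E}_p(\Omega)$. It is an almost minimizer only on a closed constrained subset, such as functions with small mass in $B_n$ or $W^{1,p}_0(\Omega\setminus\overline{B_n})$. Applied relative to such a subset, Ekeland gives almost criticality only along admissible variations. Either a multiplier for the extra constraint reappears, which is exactly the potential term you wanted to remove, or you control only the dual norm of $W^{1,p}_0(\Omega\setminus\overline{B_n})$. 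The latter does not see the flux of $|\nabla v_n|^{p-2}\nabla v_n$ across $\partial B_n\cap\Omega$. Forcing the Ekeland point into the interior of the constraint would need a quantitative relation between the energy gap and the distance to the constraint, which you do not have.

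\emph{Localization route.} Testing with $(1-\chi_n)\varphi$ leaves $\Lambda_n\int\chi_n(1-\chi_n)u_n^{p-1}\varphi$ on the transition annulus untouched. Saying ``choose the scales so that this is $o(1)$'' restates what has to be proved.

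\emph{The missing idea.} Split H\"older the other way: with $V_n=\Lambda_n\chi_n+\varepsilon_n|x|$,
$\big|\int_\Omega V_nu_n^{p-1}\varphi\,dx\big|\le\big(\int_\Omega V_n^{p'}u_n^p\,dx\big)^{1/p'}\|\varphi\|_{L^p(\Omega)}$.
Then show $\int_\Omega V_n^{p'}u_n^p\,dx\to0$. This follows from $\int_\Omega V_n^ku_n^p\,dx\to0$ for every $k$, together with $a^{p'}\le a^k+a^{k+1}$. The moment bounds come from testing the Euler--Lagrange equation with $V_n^ku_n\zeta_R^p$, letting $R\to\infty$, and inducting on $k$. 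The induction starts from $\int V_nu_n^p\to0$ and $\sup_n\int|\nabla u_n|^p<\infty$, both of which you already have; see Proposition \ref{prop:salvaculo}. This also disposes of your worry about $\varepsilon_n|x|$.

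\emph{The coupling you never impose.} The induction produces the error term $k\|\nabla V_n\|_{L^\infty}\int|\nabla u_n|^{p-1}|u_n|V_n^{k-1}$. Already at $k=1$ this is bounded only by a multiple of $\|\nabla V_n\|_{L^\infty}$, so the argument closes only if $\|\nabla V_n\|_{L^\infty}\to0$. The wall must therefore be soft: $\Lambda_n\|\nabla\chi_n\|_{L^\infty}\to0$ while $\Lambda_n\to\infty$. The paper takes $\Lambda_n=\sqrt{n}$ with $|\nabla\zeta_n|\le2/n$, so that $|\nabla V_n|\le2/\sqrt{n}+\varepsilon_n$. With $\Lambda_n$ merely tending to infinity, as in your setup, nothing in your argument controls $\|V_nu_n^{p-1}\|_{L^{p'}(\Omega)}$ or its $W^{-1,p'}$ norm, so the Palais--Smale property at level $\mathcal{E}_p(\Omega)$ remains unproved.
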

\begin{proof}
We may assume  that 
\[
\mathcal{E}_p(\Omega)<+\infty,
\]
since otherwise the desired inequality is obvious. We will prove that in this case singular Palais-Smale sequences
at level $\mathcal{E}_p(\Omega)$ do exist, so that $\mathcal{E}_p(\Omega)\in \mathfrak{S}_{{\rm ess},p}(\Omega)$.
In order to construct such a sequence, we will use a variational method: in particular, for $p=2$ the resulting proof will be different from that of Persson.
\vskip.2cm\noindent
{\it Step 1: preparation}. We consider the weighted Sobolev space $W^{1,p}_0(\Omega;|x|)$, i.e. the closure of $C^\infty_0(\Omega)$ in
\[
W^{1,p}(\Omega;|x|)=\left\{\varphi\in W^{1,p}(\Omega)\, :\, \int_\Omega |x|\,|\varphi|^p\,dx<+\infty\right\},
\]
endowed with the norm
\[
\|\varphi\|_{W^{1,p}(\Omega;|x|)}:=\left(\int_\Omega |\nabla \varphi|^p\,dx+\int_\Omega (1+|x|)\,|\varphi|^p\,dx\right)^\frac{1}{p}.
\]
For every $R>0$ and for every $\varepsilon>0$, we take a cut-off function $\zeta_R\in C_0^\infty(B_{2R})$, with 
\[
0\le\zeta\le1,\qquad \zeta_R=1\ \text{in}\ B_R,\qquad |\nabla \zeta_R|\le \frac{2}{R},
\]
and we set
\[
V_{\varepsilon,R}(x)=\sqrt{R}\, \zeta_R(x)+\varepsilon\, |x|\,,\qquad \text{for every}\ x\in \mathbb R^N.
\]
We then consider the following minimization problem
\begin{equation}
\label{en-eps-R}
\lambda_p(\Omega;V_{\varepsilon,R})= \inf_{\varphi\in W^{1,p}_0(\Omega;|x|)}
\left\{
\int_\Omega|\nabla \varphi|^p\,dx +\int_\Omega V_{\varepsilon,R}\,|\varphi|^p\,dx\, :\, 
\int_\Omega|\varphi|^p\,dx=1
\right\}\,.
\end{equation}
We observe at first that, for every $\varepsilon>0$ and every $R>0$, there exists a minimizer $u_{\varepsilon,R}\in W^{1,p}_0(\Omega;|x|)$ for the previous problem. Indeed, we can apply the Direct Method in the Calculus of Variations, by exploiting the compact embedding
\[
W^{1,p}_0(\Omega;|x|)\hookrightarrow L^p(\Omega),
\]
see for example \cite[Proposition 2.7]{BraBriPri_low}. By minimality, we have in particular the energy identity
\begin{equation}
\label{EI}
\int_\Omega |\nabla u_{\varepsilon,R}|^p\,dx+\int_\Omega V_{\varepsilon,R}\,|u_{\varepsilon,R}|^p\,dx=\lambda_p(\Omega;V_{\varepsilon,R}),
\end{equation}
and the Euler-Lagrange equation
\begin{equation}
\label{EL}
\begin{split}
\int_\Omega \langle|\nabla u_{\varepsilon,R}|^{p-2}\,\nabla u_{\varepsilon,R},\nabla\varphi\rangle\,dx&+\int_\Omega V_{\varepsilon,R}\,|u_{\varepsilon,R}|^{p-2}\,u_{\varepsilon,R}\,\varphi\,dx\\
&=\lambda_p(\Omega;V_{\varepsilon,R})\,\int_\Omega |u_{\varepsilon,R}|^{p-2}\,u_{\varepsilon,R}\,\varphi\,dx,
\end{split}
\end{equation}
for every $\varphi\in W^{1,p}_0(\Omega;|x|)$.
\vskip.2cm\noindent
{\it Step 2: construction of the sequence}. For each $n\in\mathbb{N}\setminus\{0\}$, we now choose $v_n\in C^\infty_0(\Omega\setminus \overline{B_{2n}})$
\[
\int_{\Omega\setminus \overline{B_{2n}}} |\nabla v_n|^p\,dx \le \lambda_p\left(\Omega\setminus \overline{B_{2n}}\right) + \frac{1}{n},\qquad\int_{\Omega\setminus B_{2n}}|v_n|^{p}\,dx=1.
\]
Since $v_n$ has compact support, there exists $R_n>2\,n$ such that $v_n\in C^\infty_0(B_{R_n})$, as well.
For all $n\in\mathbb N\setminus\{0\}$, we choose 
\[
\varepsilon_n = \frac{1}{R_n^2},
\]
and we set for simplicity
\[
	\lambda_n= \lambda_p(\Omega\mathbin;V_{\varepsilon_n,n})\,,
	\qquad
	u_n=u_{\varepsilon_n, n}\qquad	\text{and}\qquad
	V_n=V_{\varepsilon_n,n}=\sqrt{n}\, \zeta_n+\frac{1}{R_n^2}\, |x|.
\]
In other words, we consider the quantities introduced in {\it Step 1} with the choices $R=n$ and $\varepsilon=\varepsilon_n$.
We claim that (a suitable subsequence of) $\{u_n\}_{n\ge 1}$ is the claimed singular constrained Palais-Smale sequence at level $\mathcal{E}_p(\Omega)$. We thus need to verify that $\{u_n\}_{n\ge 1}$ has the following properties:
\begin{equation}
\label{PS1}
\|u_n\|_{L^p(\Omega)}=1,\qquad \text{for every}\ n\in\mathbb{N}\setminus\{0\},
\tag{$PS_1$}
\end{equation}
\begin{equation}
\label{PS2}
\lim\limits_{n\to\infty} \int_\Omega |\nabla u_n|^p\,dx=\mathcal{E}_p(\Omega),
\tag{$PS_2$}
\end{equation}
\begin{equation}
\label{PS3} 
\lim_{n\to\infty} \Big\|-\Delta_p u_n-\lambda\,|u_n|^{p-2}\,u_n\Big\|_{W^{-1,p'}(\Omega)}=0,
\tag{$PS_3$}
\end{equation}
and finally
\begin{equation}
\label{PS4}
\lim_{n\to\infty}  \|u_n\|_{L^p(\Omega\cap B_R)}=0,\qquad \text{for every}\ R>0.
\tag{$PS_4$}
\end{equation}
We notice that \eqref{PS1} is true by construction. 
\par
In order to prove the other properties, we collect some additional facts. Observe that $\zeta_n$ and $v_n$ have disjoint supports, this implies that 
\[
V_n(x)\,v_n(x)= \varepsilon_n\, |x|\,v_n(x),\qquad \text{for every}\ x\in\Omega.
\]
Thus, by using $v_n$ as a competitor for \eqref{en-eps-R}
with $\varepsilon=\varepsilon_n=1/R_n^2$ and $R=n$, we obtain
\[
\begin{split}
	\lambda_n&\le \int_{\Omega\setminus\overline{B_{2n}}}
	|\nabla v_n|^p\,dx+\frac{1}{R_n^2}\,\int_{\Omega\setminus\overline{B_{2n}}}|x|\,|v_n|^p\,dx\\
	&\le \lambda_p(\Omega\setminus \overline{B_{2n}}) + \frac{1}{n}+ \frac{1}{R_n^2}\,\left(\max_{x\in B_{R_n}} |x|\right)\,\int_{B_{R_n}} |v_n|^p\,dx=\lambda_p(\Omega\setminus \overline{B_{2n}}) + \frac{1}{n}+\frac{1}{R_n}.
\end{split}	
\]
In the second inequality we used that each $v_n$ is compactly supported in $B_{R_n}$, while in
last equality we used that $\|v_n\|_{L^p(B_{R_n})}=1$.
Whence, by taking the limit as $n$ goes to $\infty$, it follows that
\[
\limsup_{n\to\infty} \lambda_n\le \lim_{n\to\infty}\left(\lambda_p(\Omega\setminus \overline{B_{2n}}) + \frac{1}{n}+\frac{1}{R_n}\right)=\mathcal{E}_p(\Omega),
\]
thanks to the definition of $\mathcal{E}_p(\Omega)$. This in particular implies that there exists a constant $C$ not depending on $n$, such that
\begin{equation}
\label{uniforme}
\int_\Omega |\nabla u_n|^p\le \lambda_n\le C,\qquad \text{for every}\ n\in\mathbb{N}\setminus\{0\}.
\end{equation}
Observe that the leftmost inequality follows from \eqref{EI} and the fact that $V_n\ge 0$.
\vskip.2cm\noindent
{\it Step 3: verification of $\eqref{PS4}$.} From the energy identity \eqref{EI} and \eqref{uniforme}, we obtain in particular
\[
\sqrt{n}\, \int_{\Omega\cap B_n} |u_n|^p\,dx\le \sqrt{n}\, \int_\Omega \zeta_n\,|u_n|^p\,dx\le \lambda_n\le C.
\]
Thus, if we fix $R>0$, we obtain for every $n\ge R$
\[
\int_{\Omega\cap B_R} |u_n|^p\,dx\le \frac{C}{\sqrt{n}},
\]
so that
\[
\lim_{n\to\infty} \|u_n\|_{L^p(\Omega\cap B_R)}=0,
\]
as desired.
\vskip.2cm\noindent
{\it Step 4: verification of \eqref{PS2}}. From {\it Step 2} and \eqref{EI}, we already know that 
\[
\limsup_{n\to\infty} \int_\Omega |\nabla u_n|^p\,dx\le \mathcal{E}_p(\Omega).
\]
We argue by contradiction and suppose that 
\[
\limsup_{n\to\infty} \int_\Omega |\nabla u_n|^p\,dx< \mathcal{E}_p(\Omega).
\]
By recalling \eqref{PS1}, up to taking a subsequence, we can apply Lemma \ref{lm:lemmadelca} and assures that $\{u_n\}_{n\in\mathbb{N}}$ has a nontrivial weak limit $u\in W^{1,p}_0(\Omega)$, such that
\[
\|u\|_{L^p(\Omega)}\ge \mathfrak{d}>0.
\]
By using that 
\[
\lim_{R\to +\infty} \|u\|_{L^p(\Omega\cap B_R)}=\|u\|_{L^p(\Omega)},
\]
we can in particular find a radius $\overline{R}>0$ such that
\[
\|u\|_{L^p(\Omega\cap B_{\overline{R}})}\ge \frac{\mathfrak{d}}{2}.
\]
On the other hand, by using \eqref{PS4} and the lower semicontinuity of the $L^p$ norm with respect to the weak convergence, we get
\[
0=\lim_{n\to\infty} \|u_n\|_{L^p(\Omega\cap B_{\overline{R}})}\ge \|u\|_{L^p(\Omega\cap B_{\overline{R}})}\ge \frac{\mathfrak{d}}{2}>0.
\]
This gives a contradiction. We thus must have 
\[
\limsup_{n\to\infty} \int_\Omega |\nabla u_n|^p\,dx= \mathcal{E}_p(\Omega).
\]
Possibly taking a subsequence, we then obtain the desired property \eqref{PS2}.
\vskip.2cm\noindent
{\it Step 5: verification of \eqref{PS3}}. This is the most delicate step.  We first observe that
\begin{equation}
\label{limen2}
\lim_{n\to\infty} \lambda_n = \lim_{n\to\infty}\left[\int_\Omega
|\nabla u_n|^p\,dx+\int_\Omega V_n\,|u_n|^p\,dx\right]=\mathcal{E}_p(\Omega),
\end{equation}
and 
\begin{equation}
\label{panino}
\lim_{n\to\infty}\int_\Omega V_n\,|u_n|^p\,dx=0,
\end{equation}
as a consequence of {\it Step 2} and {\it Step 4}. Actually, by appealing to Proposition \ref{prop:salvaculo}, 
we can obtain the following upgrade of \eqref{panino}
\begin{equation}
\label{ind3}
\lim_{n\to\infty} \int_\Omega V_n^k\,|u_n|^p\,dx=0\,, \qquad\text{for every}\ k\in\mathbb{N}\setminus\{0\}.
\end{equation}
Observe that the latter obviously implies that
\begin{equation}
\label{quasisoluz}
\lim_{n\to\infty}\int_\Omega V_n^{p'}\,|u_n|^p\,dx=0,
\end{equation}
as well.
Indeed, there exists $k\in\mathbb N\setminus\{0\}$ such that $k<p'\le k+1$. By using the elementary inequality
\[
a^{p'}\le a^k+a^{k+1},\qquad \text{for every}\ a\ge 0,
\]
we get
\[	
\int_\Omega V_n^{p'}\,|u_n|^p\,dx\le 
\int_\Omega V_n^{k}\,|u_n|^p\,dx+\int_\Omega V_n^{k+1}\,|u_n|^p\,dx.
\]	
Whence \eqref{quasisoluz} follows by \eqref{ind3}.
\par
It is now time to conclude the proof. We take $\varphi\in C^\infty_0(\Omega)$, by using the Euler-Lagrange equation \eqref{EL}, we have
\[
\begin{split}
\left\langle-\Delta_p u_n-\mathcal{E}_p(\Omega)\,|u_n|^{p-2}\,u_n,\varphi\right\rangle_{(W^{-1,p'}(\Omega),W^{1,p}_0(\Omega))}&=\int_\Omega \langle |\nabla u_n|^{p-2}\,\nabla u_n,\nabla \varphi\rangle\,dx\\
&-\mathcal{E}_p(\Omega)\,\int_\Omega |u_n|^{p-2}\,u_n\,\varphi\,dx\\
&=-\int_\Omega V_n\,|u_n|^{p-2}\,u_n\,\varphi\,dx\\
&+\Big(\lambda_n-\mathcal{E}_p(\Omega)\Big)\,\int_\Omega |u_n|^{p-2}\,u_n\,\varphi\,dx.
\end{split}
\]
By using H\"older's inequality, we get
\[
\begin{split}
\left|\left\langle-\Delta_p u_n-\mathcal{E}_p(\Omega)\,|u_n|^{p-2}\,u_n,\varphi\right\rangle_{(W^{-1,p'}(\Omega),W^{1,p}_0(\Omega))}\right|&\le \left(\int_\Omega V_n^{p'}\,|u_n|^p\,dx\right)^\frac{p-1}{p}\,\|\varphi\|_{L^p(\Omega)}\\
&+|\lambda_n-\mathcal{E}_p(\Omega)|\,\|\varphi\|_{L^p(\Omega)}.
\end{split}
\]
By using that $\|\varphi\|_{L^p(\Omega)}\le \|\varphi\|_{W^{1,p}(\Omega)}$, the arbitrariness of $\varphi\in C^\infty_0(\Omega)$ and both \eqref{limen2} and \eqref{quasisoluz}, we obtain
\[
\lim_{n\to\infty}\left\|-\Delta_p u_n-\mathcal{E}_p(\Omega)\,|u_n|^{p-2}\,u_n\right\|_{W^{-1,p'}(\Omega)}=0.
\]
This concludes the proof.
\end{proof}
\begin{oss}
We observe that the singular sequence at the level $\mathcal{E}_p(\Omega)$ constructed in the previous proof actually enjoys the following stronger property
\[
\lim_{n\to\infty}\left\|-\Delta_p u_n-\mathcal{E}_p(\Omega)\,|u_n|^{p-2}\,u_n\right\|_{L^{p'}(\Omega)}=0.
\]
\end{oss}

\section{Consistency for the case $p=2$}
\label{sec:4}
In this section, we show that our definition of essential spectrum is consistent with the usual one for $p=2$. 
At a first glance, by comparing the definition of singular Weyl sequence  and Definition \ref{defi:PS}, we can notice a certain analogy between the two. 
However, in principle the last concept seems slightly weaker, because it does not require $-\Delta u_n-\lambda\,u_n\in L^2(\Omega)$. 
\par
Indeed, in general {\it it is not true} that a singular constrained Palais-Smale sequences is also a singular Weyl sequence. Nevertheless, every such a sequence can be ``slightly amended'' in order to fulfill the additional requirement that $-\Delta u_n-\lambda\,u_n$ goes to $0$ in $L^2(\Omega)$, as well. This is the content of the following
\begin{lm}[``Weyl correction'']
\label{lm:correction}
Let $\Omega\subseteq\mathbb{R}^N$ be an open set. Let us suppose that there exists a singular constrained Palais-Smale sequence $\{u_n\}_{n\in\mathbb{N}}\subseteq W^{1,2}_0(\Omega)$ at the level $\lambda$. Then there also exists a singular Weyl sequence at the same level.
\end{lm}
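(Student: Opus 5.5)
The plan is to correct each $u_n$ by solving one coercive linear problem: subtract from $u_n$ the solution of the resolvent equation with right-hand side the Palais-Smale defect. Set
\[
f_n:=-\Delta u_n-\lambda\,u_n\in W^{-1,2}(\Omega),
\]
so that $\|f_n\|_{W^{-1,2}(\Omega)}\to 0$ by property (iii) of Definition \ref{defi:PS}. By the Lax--Milgram (or Riesz) theorem applied to the coercive form $\int_\Omega\langle\nabla z,\nabla\varphi\rangle\,dx+\int_\Omega z\,\varphi\,dx$, which is exactly the $W^{1,2}(\Omega)$ scalar product on $W^{1,2}_0(\Omega)$, there is a unique $z_n\in W^{1,2}_0(\Omega)$ with
\[
-\Delta z_n+z_n=f_n\quad\text{weakly in }\Omega,\qquad \|z_n\|_{W^{1,2}(\Omega)}=\|f_n\|_{W^{-1,2}(\Omega)}\longrightarrow 0 .
\]
I then define $v_n:=u_n-z_n\in W^{1,2}_0(\Omega)$. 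This choice is what produces $L^2$ regularity.

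Next I compute the equation satisfied by $v_n$ in the weak sense:
\[
-\Delta v_n=-\Delta u_n+z_n-f_n=\lambda\,u_n+z_n\in L^2(\Omega),
\]
so $v_n$ lies in the domain of the Dirichlet-Laplacian, as required by (W3). Moreover,
\[
-\Delta v_n-\lambda\,v_n=\lambda\,u_n+z_n-\lambda\,u_n+\lambda\,z_n=(1+\lambda)\,z_n,
\]
whence $\|-\Delta v_n-\lambda v_n\|_{L^2(\Omega)}\le(1+\lambda)\,\|z_n\|_{L^2(\Omega)}\to0$. Here $\lambda\ge 0$, since $\lambda=\lim_n\int_\Omega|\nabla u_n|^2\,dx$.

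Then I normalize. Since $\|u_n\|_{L^2(\Omega)}=1$ and $\|z_n\|_{L^2(\Omega)}\to0$, we have $\|v_n\|_{L^2(\Omega)}\to1$, so for $n$ large I may set
\[
w_n:=\frac{v_n}{\|v_n\|_{L^2(\Omega)}}.
\]
Properties (W1), (W3) and (W4) for $w_n$ follow at once, the last one because the normalizing factors tend to $1$.

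The only point requiring an argument is (W2), namely $w_n\rightharpoonup0$ in $L^2(\Omega)$. Since $z_n\to0$ strongly, it suffices to show $u_n\rightharpoonup0$. Take $g\in L^2(\Omega)$ and $\delta>0$, and choose $R$ with $\|g\|_{L^2(\Omega\setminus B_R)}<\delta$. Then
\[
\left|\int_\Omega u_n\,g\,dx\right|\le\|u_n\|_{L^2(\Omega\cap B_R)}\,\|g\|_{L^2(\Omega)}+\delta ,
\]
and the first term tends to $0$ by the singularity of the sequence. Hence $\{w_n\}$ is a singular Weyl sequence at level $\lambda$. I expect no real obstacle here. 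The key idea is the resolvent correction $z_n=(-\Delta+1)^{-1}f_n$, which converts smallness in $W^{-1,2}$ into smallness of $-\Delta v_n-\lambda v_n$ in $L^2$ at the cost of an $W^{1,2}$-small perturbation.
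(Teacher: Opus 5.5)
Your proposal is correct and follows essentially the same route as the paper, which also subtracts the solution $U_n$ of $-\Delta U_n+U_n=h_n$ (obtained as the minimizer of a strictly convex functional rather than via Lax--Milgram), normalizes $u_n-U_n$, and gets $-\Delta v_n-\lambda\,v_n=(1+\lambda)\,U_n/\|u_n-U_n\|_{L^2(\Omega)}$. Your explicit check of (W2), deriving the weak convergence of $u_n$ to $0$ from the singularity of the sequence, fills in a detail the paper only mentions in passing.
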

\begin{proof}
We set 
\[
h_n:=-\Delta u_n-\lambda\,u_n\in W^{-1,2}(\Omega).
\]
By assumption, we know that $h_n$ converges to $0$ in $W^{-1,2}(\Omega)$. We now take $U_n\in W^{1,2}_0(\Omega)$ the unique minimizer of the strictly convex functional
\[
\frac{1}{2}\,\int_\Omega |\nabla\varphi|^2\,dx+\frac{1}{2}\,\int_\Omega |\varphi|^2\,dx-\langle h_n,\varphi\rangle_{(W^{-1,2}(\Omega),W^{1,2}_0(\Omega))},\qquad \text{for every}\ \varphi\in W^{1,2}_0(\Omega).
\]
In particular, $U_n$ weakly solves
\[
-\Delta U_n+U_n=h_n,\qquad \text{in}\ \Omega,
\]
and it thus verifies
\[
\|U_n\|_{W^{1,2}(\Omega)}^2=\int_\Omega |\nabla U_n|^2\,dx+\int_\Omega |U_n|^2\,dx\le \|h_n\|_{W^{-1,2}(\Omega)}\,\|U_n\|_{W^{1,2}(\Omega)},
\]
which implies
\begin{equation}
\label{correttore}
\|U_n\|_{W^{1,2}(\Omega)}\le \|h_n\|_{W^{-1,2}(\Omega)}.
\end{equation}
Observe in particular that 
\[
1-\|U_n\|_{L^2(\Omega)}\le\|u_n-U_n\|_{L^2(\Omega)}\le 1+\|U_n\|_{L^2(\Omega)},
\]
thus from \eqref{correttore} we get that 
\begin{equation}
\label{norma1}
\lim_{n\to\infty} \|u_n-U_n\|_{L^2(\Omega)}=1.
\end{equation}
We are ready to construct the singular Weyl sequence: we set 
\[
v_n=\frac{u_n-U_n}{\|u_n-U_n\|_{L^2(\Omega)}}\in W^{1,2}_0(\Omega),
\]
and observe that each $v_n$ has unit $L^2$ norm. In addition, $\{v_n\}_{n\in\mathbb{N}}$ converges weakly to $0$ in $L^2(\Omega)$, thanks to the properties of both $u_n$ and $U_n$, together with \eqref{norma1}. Moreover, by construction we have
\[
-\Delta v_n-\lambda\,v_n=\frac{-\Delta u_n-\lambda\,u_n+\Delta U_n+\lambda\,U_n}{\|u_n-U_n\|_{L^2(\Omega)}}=\frac{(1+\lambda)\,U_n}{\|u_n-U_n\|_{L^2(\Omega)}}\in L^2(\Omega).
\]
In particular, we have $-\Delta v_n\in L^2(\Omega)$. By recalling \eqref{norma1} and \eqref{correttore}, we also get that $-\Delta v_n-\lambda\,v_n$ converges to $0$ in $L^2(\Omega)$, as $n$ goes to $\infty$. This concludes the proof.
\end{proof}
We can finally get the following interesting result, showing that our definition of essential spectrum coincides with the usual one in the linear case.
\begin{teo}
\label{teo:essenza2}
Let $\Omega\subseteq\mathbb{R}^N$ be an open set. We denote by $\mathfrak{S}_{\rm ess}(\Omega)$ the essential spectrum of the Dirichlet-Laplacian on $\Omega$. Then we have
\[
\mathfrak{S}_{{\rm ess},2}(\Omega)=\mathfrak{S}_{{\rm ess}}(\Omega).
\]
\end{teo}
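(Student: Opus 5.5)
We prove the two inclusions separately. Throughout we use the Weyl criterion recalled in the Introduction as the characterization of $\mathfrak{S}_{\rm ess}(\Omega)$.

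\emph{Inclusion $\mathfrak{S}_{{\rm ess},2}(\Omega)\subseteq\mathfrak{S}_{\rm ess}(\Omega)$.} Let $\lambda\in\mathfrak{S}_{{\rm ess},2}(\Omega)$, so there is a singular constrained Palais-Smale sequence at the level $\lambda$. Lemma \ref{lm:correction} produces a singular Weyl sequence at the same level. By the Weyl criterion, $\lambda\in\mathfrak{S}_{\rm ess}(\Omega)$.

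\emph{Inclusion $\mathfrak{S}_{\rm ess}(\Omega)\subseteq\mathfrak{S}_{{\rm ess},2}(\Omega)$.} Let $\{u_n\}_{n\in\mathbb{N}}$ be a singular Weyl sequence at level $\lambda$, and set $f_n:=-\Delta u_n-\lambda\,u_n\to0$ in $L^2(\Omega)$. We check the items of Definition \ref{defi:PS} in turn.

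\begin{enumerate}
\item[(i)] This is (W1).
\item[(iii)] Since $\|\varphi\|_{L^2}\le\|\varphi\|_{W^{1,2}}$, we have $\|f_n\|_{W^{-1,2}}\le\|f_n\|_{L^2}\to0$.
\item[(ii)] Test the equation with $u_n\in W^{1,2}_0(\Omega)$:
\[
\int_\Omega|\nabla u_n|^2\,dx=\lambda+\int_\Omega f_n\,u_n\,dx\to\lambda .
\]
This also shows that $\{u_n\}_{n\in\mathbb{N}}$ is bounded in $W^{1,2}_0(\Omega)$.
\end{enumerate}

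It remains to prove singularity, i.e. $\|u_n\|_{L^2(\Omega\cap B_R)}\to0$ for every $R>0$. This is the only step needing an argument, since weak convergence alone does not give local strong convergence. Because $\{u_n\}_{n\in\mathbb{N}}$ is bounded in $W^{1,2}_0(\Omega)$, every subsequence has a further subsequence converging weakly in $W^{1,2}(\Omega)$ to some $u\in W^{1,2}_0(\Omega)$. That weak limit must agree with the weak $L^2$ limit, which is $0$ by (W2), so $u=0$. Lemma \ref{lm:mizuno} then gives $\|u_{n_k}\|_{L^2(\Omega\cap B_R)}\to0$ along that subsequence. Since every subsequence has a further subsequence with this limit, the whole sequence satisfies it. Hence the sequence is singular and $\lambda\in\mathfrak{S}_{{\rm ess},2}(\Omega)$.

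Combining the two inclusions gives $\mathfrak{S}_{{\rm ess},2}(\Omega)=\mathfrak{S}_{\rm ess}(\Omega)$.

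The main substantive input is the first inclusion, and it is entirely handled by Lemma \ref{lm:correction}. Within this proof, the only delicate point is the subsequence argument that upgrades weak convergence to local strong convergence.
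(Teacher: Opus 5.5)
Your proposal is correct and follows the paper's proof essentially step by step: Lemma \ref{lm:correction} together with the Weyl criterion gives one inclusion, and for the other you make the same checks of (i)--(iii), with singularity obtained from Lemma \ref{lm:mizuno}. The only difference is how you identify the weak $W^{1,2}$ limit: you pass to subsequences and compare with the weak $L^2$ limit, while the paper tests against fields $\phi\in C^\infty_0(\Omega;\mathbb{R}^N)$; both are fine.
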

\begin{proof}
As recalled in the Introduction, by the {\it Weyl criterion} we have that $\lambda\in \mathfrak{S}_{\rm ess}(\Omega)$ if and only if there exists a singular Weyl sequence at the level $\lambda$. Thus, in light of our definition of $\mathfrak{S}_{{\rm ess},2}(\Omega)$ and Lemma \ref{lm:correction}, we get
\[
\mathfrak{S}_{{\rm ess},2}(\Omega)\subseteq\mathfrak{S}_{{\rm ess}}(\Omega).
\]
The converse inclusion is standard, we give the details for completeness. Let us take $\lambda\in \mathfrak{S}_{{\rm ess}}(\Omega)$ and consider a singular Weyl sequence $\{u_n\}_{n\in\mathbb{N}}\subseteq L^2(\Omega)$. Observe that 
\[
\|u_n\|_{L^2(\Omega)}=1,\qquad \text{for every}\ n\in\mathbb{N}.
\]
By (W4), we also get
\[
\lim_{n\to\infty} \|-\Delta u_n-\lambda\,u_n\|_{W^{-1,2}(\Omega)}\le \lim_{n\to\infty} \|-\Delta u_n-\lambda\,u_n\|_{L^2(\Omega)}=0.
\]
In the last inequality, we used that $L^2(\Omega)\subseteq W^{-1,2}(\Omega)$, with 
\[
\|\varphi\|_{W^{-1,2}(\Omega)}\le \|\varphi\|_{L^2(\Omega)},\qquad \text{for every}\ \varphi\in L^2(\Omega).
\]
Indeed, for every $\varphi\in L^2(\Omega)$ and every $\psi\in W^{1,2}_0(\Omega)$, we have 
\[
\left|\int_\Omega \varphi\,\psi\,dx\right|\le \|\varphi\|_{L^2(\Omega)}\,\|\psi\|_{L^2(\Omega)}\le \|\varphi\|_{L^2(\Omega)}\,\|\psi\|_{W^{1,2}(\Omega)}.
\]
In order to prove that $\{u_n\}_{n\in\mathbb{N}}$ is a singular constrained Palais-Smale sequence at the level $\lambda$, we still need to prove that 
\begin{equation}
\label{renato}
\lim_{n\to\infty} \int_\Omega |\nabla u_n|^2\,dx=\lambda\qquad \text{and}\qquad \lim_{n\to\infty}  \|u_n\|_{L^2(\Omega\cap B_R)}=0,\qquad \text{for every}\ R>0.
\end{equation}
The first fact follows by using the strong $L^2(\Omega)$ convergence of $-\Delta u_n-\lambda\,u_n$. Indeed, the latter implies that 
\[
\begin{split}
\int_\Omega |\nabla u_n|^2\,dx-\lambda=\left\langle-\Delta u_n-\lambda\,u_n,u_n\right\rangle_{(L^2(\Omega),L^2(\Omega))}&=o(1)\,\|u_n\|_{L^2(\Omega)}=o(1).
\end{split}
\]
This identity gives the desired property of the Dirichlet integral. 
\par
The second property in \eqref{renato} can be proved by appealing to Lemma \ref{lm:mizuno}. Indeed, observe that the weak convergence to $0$ in $L^2(\Omega)$ of $\{u_n\}_{n\in\mathbb{N}}$ implies that
\[
\lim_{n\to\infty} \int_\Omega \langle \nabla u_n,\phi\rangle\,dx=-\lim_{n\to\infty} \int_\Omega u_n\,\mathrm{div}\,\phi\,dx=0,\qquad \text{for every}\ \phi\in C^\infty_0(\Omega;\mathbb{R}^N).
\]
Thus, we have that $\{u_n\}_{n\in\mathbb{N}}$ weakly converges to $0$ in $W^{1,2}(\Omega)$. By Lemma \ref{lm:mizuno}, we obtain the desired local strong convergence.
In conclusion, we get
\[
\mathfrak{S}_{{\rm ess},2}(\Omega)\supseteq\mathfrak{S}_{{\rm ess}}(\Omega),
\]
as well.
\end{proof}

\section{A case study: the spectrum of a rectilinear strip}
\label{sec:5}
In this section we will compute the full spectrum of the Dirichlet $p-$Laplacian for a rectilinear strip. Throughout the whole secion, we will use the notation
\[
\mathcal{S}_\alpha=(-\alpha,\alpha)\times\mathbb{R},
\]
where $\alpha>0$. We also fix the following notation for a rectangle: for $T>0$, we set
\[
\mathcal{S}_{\alpha,T}=(-\alpha,\alpha)\times(-T,T).
\]
Moreover, we introduce the one-dimensional Poincar\'e-Sobolev constant 
\[
\pi_p:=\inf_{\varphi\in C^\infty_0(\mathrm{I})} \Big\{\|\varphi'\|_{L^p(\mathrm{I})}\, :\, \|\varphi\|_{L^p(\mathrm{I})}=1\Big\},\quad  \text{where }\mathrm{I}=\left(-\frac{1}{2},\frac{1}{2}\right).
\]
With this definition, it is not difficult to show that
\begin{equation}
\label{brunopini}
\lambda_p(\mathcal{S}_\alpha) = \lambda_p((-\alpha,\alpha))=\left(\frac{\pi_p}{2\,\alpha}\right)^p,
\end{equation}
see for example \cite[Lemma A.2]{Bra_bus}.
\vskip.2cm\noindent
We first need to recall some regularity results for eigenfunctions on rectangles. Though not optimal, the following statement will be largely sufficient for our scopes.
\begin{lm}
\label{lm:C1}
Let $1<p<\infty$ and let $u\in W^{1,p}_0(\mathcal{S}_{\alpha,T})$ be a solution of 
\[
-\Delta_p u=\lambda\,|u|^{p-2}\,u,\qquad \text{in}\ \mathcal{S}_{\alpha,T},
\]
for some $\lambda>0$. Then $u\in C^1(\overline{\mathcal{S}_{\alpha,T}})$. The same result is true also if $u\in W^{1,p}_0(\mathcal{S}_{\alpha})$ and it weakly solves the above equation in $\mathcal{S}_\alpha$.
\end{lm}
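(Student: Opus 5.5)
The plan is to pass from interior regularity to regularity up to the boundary, using the flatness of the boundary and odd reflections to get rid of it. On the rectangle $\mathcal{S}_{\alpha,T}$ the boundary is flat except at the four corners. The first step is to show $u\in L^\infty(\mathcal{S}_{\alpha,T})$. I would do this by a Moser iteration: test the equation with $|u|^{\beta-1}u$ (suitably truncated), extend $u$ by zero to $\mathbb{R}^2$, and use the Sobolev inequality for $W^{1,p}_0$. Since $\lambda|u|^{p-2}u$ has the natural growth, the iteration closes and gives $\|u\|_{L^\infty}\le C\,\|u\|_{L^p}$.

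Once $u$ is bounded, the right-hand side $f=\lambda|u|^{p-2}u$ lies in $L^\infty$. Interior $C^{1,\gamma}$ regularity (DiBenedetto, Tolksdorf) then holds in $\mathcal{S}_{\alpha,T}$.

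For the flat parts of the boundary I would use reflection rather than invoking Lieberman's theorem directly. Extend $u$ oddly across the side $x_1=\alpha$ by setting $\tilde u(x_1,x_2)=-u(2\alpha-x_1,x_2)$. The $p$-Laplacian is invariant under reflections, and $f$ is odd in $u$, so $\tilde u$ is a weak solution of the same equation in the doubled domain. To verify this, split a test function $\varphi$ into its even and odd parts with respect to the reflection. The even part integrates to zero against both sides. The odd part vanishes on the interface, hence belongs to $W^{1,p}_0$ of each half, where the equation already holds. Interior regularity then gives $C^1$ up to the open sides.

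At a corner, say $(\alpha,T)$, I would reflect oddly twice, first in $x_1$ and then in $x_2$. This produces a solution in a full neighbourhood of the corner: four copies glued along flat interfaces, each step justified by the same even/odd splitting. Interior $C^{1,\gamma}$ regularity again applies, so $u\in C^1(\overline{\mathcal{S}_{\alpha,T}})$.

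For the strip $\mathcal{S}_\alpha$, the $L^\infty$ bound still follows from Moser iteration, since $W^{1,p}_0(\mathcal{S}_\alpha)$ embeds into $L^q$ through the Gagliardo–Nirenberg inequality on the unbounded planar domain. Alternatively, it can be obtained locally via De Giorgi on unit squares, with constants uniform by translation invariance. The odd reflection across $x_1=\pm\alpha$ then makes every point interior, and the $C^{1,\gamma}$ estimates on unit balls are uniform, which gives $u\in C^1(\overline{\mathcal{S}_\alpha})$ with bounded gradient.

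The main obstacle I expect is the rigorous justification that the reflected function is a weak solution, in particular the double reflection at the corners. This needs the even/odd decomposition of test functions and the fact that an odd $W^{1,p}$ function on the doubled domain restricts to $W^{1,p}_0$ of each half. The latter holds because its trace on the interface vanishes, and the interface is a Lipschitz flat piece of the boundary.
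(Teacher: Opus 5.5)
Your proposal is correct and follows essentially the same route as the paper. The paper combines odd reflection across the flat sides with DiBenedetto's interior $C^{1,\alpha}$ estimates, and for the strip it reflects twice in $x_1$ to reach $(-3\alpha,3\alpha)\times\mathbb{R}$. You additionally supply details the paper leaves to its references: the $L^\infty$ bound needed to apply DiBenedetto, the even/odd splitting of test functions that justifies the reflected equation, and the double reflection at the corners. All of these are sound.
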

\begin{proof}
In the ``bounded'' case of $\mathcal{S}_{\alpha,T}$, this can be deduced from classical interior $C^{1,\alpha}$ estimates like those of \cite[Theorem 2]{DiB}, by using a simple ``extension by odd reflection'' argument. See for instance \cite[Lemma 2.1]{Bob}.
\par
For the case $\mathcal{S}_\alpha$, we can proceed similarly: we extend to the larger strip $\mathcal{S}_{3\alpha}=(-3\,\alpha,3\,\alpha)\times\mathbb{R}$ the function $u$, by using two odd reflections in the $x_1$ variable. If we indicate by $U$ the resulting function, this is still a solution of the same equation in $\mathcal{S}_{3\,\alpha}$. In particular, by \cite[Theorem 2]{DiB} we get
\[
U\in C^1(\overline{\mathcal{S}_{\ell,T}}),\qquad \text{for every}\ 0<\ell<3\,\alpha,\ T>0.
\]
By choosing $\ell=\alpha$ and using the arbitrariness of $T>0$, we conclude.
\end{proof}
In what follows, for every $\varepsilon>0$ we set
\[
H_\varepsilon(z)=\frac{1}{p}\,(\varepsilon^2+|z|^2)^\frac{p}{2},\qquad \text{for every}\ z\in\mathbb{R}^N.
\]
The following approximation result will useful in a while.
\begin{lm}
\label{lm:approx2}
Let $1<p<\infty$, if $u\in W^{1,p}_0(\mathcal{S}_\alpha)$ is a weak solution of 
\[
-\Delta_p u=\lambda\,|u|^{p-2}\,u,\qquad \text{in}\ \mathcal{S}_\alpha,
\]
for some $\lambda>0$. Let $T>0$, for every $\varepsilon>0$ we consider the functional
\[
\mathcal{F}_\varepsilon(\varphi):=\int_{\mathcal{S}_{\alpha,T}} H_{\varepsilon}(\nabla\varphi)\,dx-\lambda\,\int_{\mathcal{S}_{\alpha,T}} |u|^{p-2}\,u\,\varphi\,dx,\qquad \text{for every}\ \varphi\in W^{1,p}(\mathcal{S}_{\alpha,T}).
\]
Then the problem
\[
\min_{\varphi\in W^{1,p}(\mathcal{S}_{\alpha,T})} \left\{\mathcal{F}_\varepsilon(\varphi)\, :\, \varphi-u\in W^{1,p}_0(\mathcal{S}_{\alpha,T})\right\},
\]
admits a unique solution $u_\varepsilon$, which weakly solves
\[
-\mathrm{div}\nabla H_\varepsilon(\nabla u_\varepsilon)=\lambda\,|u|^{p-2}\,u,\qquad \mbox{in}\ \mathcal{S}_{\alpha,T}.
\]
Moreover, we have $u_\varepsilon \in C^2(\overline{\mathcal{S}_{\alpha,t}})$, for every $0<t<T$, and 
\[
\frac{\partial u_\varepsilon}{\partial x_2}=0,\qquad \text{on}\ \Big(\{-\alpha\}\times[-t,t]\Big)\cup\Big(\{\alpha\}\times[-t,t]\Big).
\]
Finally, there exists an infinitesimal sequence of positive numbers $\{\varepsilon_k\}_{k\in\mathbb{N}}$ such that
\[
\lim_{\varepsilon\to 0} \|u_{\varepsilon_k}-u\|_{W^{1,p}(\mathcal{S}_{\alpha,T})}=0.
\]
\end{lm}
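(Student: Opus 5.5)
Existence and uniqueness of $u_\varepsilon$ will come from the Direct Method applied on the affine space $u+W^{1,p}_0(\mathcal{S}_{\alpha,T})$. The map $z\mapsto H_\varepsilon(z)$ is strictly convex and satisfies $\frac1p|z|^p\le H_\varepsilon(z)\le C(\varepsilon^p+|z|^p)$. The linear term is continuous on $L^p(\mathcal{S}_{\alpha,T})$, because $|u|^{p-2}u\in L^{p'}$. Together with the Poincar\'e inequality for $\varphi-u$ on the bounded rectangle, this gives coercivity and weak lower semicontinuity. Strict convexity of $H_\varepsilon$ gives uniqueness, since two minimizers must have the same gradient and the same trace. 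The Euler--Lagrange equation follows by differentiating along $\varphi+t\psi$ with $\psi\in W^{1,p}_0(\mathcal{S}_{\alpha,T})$; differentiation under the integral is justified because $|\nabla H_\varepsilon(z)|\le C(\varepsilon^{p-1}+|z|^{p-1})$.

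For the regularity, the right-hand side $f:=\lambda|u|^{p-2}u$ is $C^{0,\beta}$ up to the boundary on $\overline{\mathcal{S}_{\alpha,T'}}$, for any $T'$, by Lemma \ref{lm:C1}. It also vanishes on the lateral sides $\{\pm\alpha\}\times\mathbb{R}$, and $u$ has zero trace there.
\begin{itemize}
\item First I extend everything by odd reflection across $x_1=\pm\alpha$, to a larger rectangle $(-3\alpha,3\alpha)\times(-T,T)$. By uniqueness, the reflected problem has the reflected $u_\varepsilon$ as its minimizer, because $H_\varepsilon$ is invariant under $z_1\mapsto -z_1$ and $f$ is odd.
\item So $u_\varepsilon$ solves a uniformly elliptic, nondegenerate equation ($\varepsilon>0$) on a domain containing the lateral sides in its interior. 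Interior regularity for such equations applies: first $C^{1,\beta}$ (DiBenedetto or Lieberman), then difference quotients give $W^{2,2}_{\rm loc}$, and Schauder theory for the linearized equation $\partial_i\big(\partial_{ij}H_\varepsilon(\nabla u_\varepsilon)\partial_j u_\varepsilon\big)=$ (H\"older data) gives $C^{2}$ on $\overline{\mathcal{S}_{\alpha,t}}$ for $t<T$.
\item The horizontal edges $x_2=\pm T$ are avoided by taking $t<T$.
\end{itemize}
One subtlety here: Schauder theory gives $C^{2,\gamma}$ only if $f$ is $C^{0,\gamma}$, and then only for the divergence-form equation. I would obtain $C^2$ by differentiating the equation in $x_k$. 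Each $w=\partial_k u_\varepsilon$ solves $\mathrm{div}(A(x)\nabla w)=-\partial_k f$, where $A=D^2H_\varepsilon(\nabla u_\varepsilon)$ is H\"older continuous. Since $\partial_k f$ is only a derivative of a H\"older function, this yields $w\in C^{1,\gamma}$ by Schauder estimates in divergence form. That is exactly $u_\varepsilon\in C^2$. Finally, the reflected $u_\varepsilon$ is odd in $x_1$ about $x_1=\pm\alpha$, so it vanishes on those lines. Hence its tangential derivative $\partial u_\varepsilon/\partial x_2$ vanishes there.

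For the convergence as $\varepsilon\to0$, I test minimality of $u_\varepsilon$ against $u$. Since $H_\varepsilon(z)\le\frac1p(1+|z|^2)^{p/2}$ for $\varepsilon\le1$, this bounds $\nabla u_\varepsilon$ uniformly in $L^p$, and Poincar\'e on $u_\varepsilon-u$ then bounds $u_\varepsilon$ in $W^{1,p}$. Along a subsequence, $u_{\varepsilon_k}\rightharpoonup v$ weakly, with $v-u\in W^{1,p}_0$. Now two facts combine:
\begin{itemize}
\item $H_\varepsilon\ge H_0=\frac1p|z|^p$, and $H_\varepsilon\to H_0$ uniformly on bounded sets with $H_\varepsilon(z)-H_0(z)\le C\varepsilon^{\min(p,1)}(1+|z|^{p})$. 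Lower semicontinuity and minimality then give that $v$ minimizes $\mathcal{F}_0$ in the same class, and that $\int H_{\varepsilon_k}(\nabla u_{\varepsilon_k})\to\frac1p\int|\nabla v|^p$.
\item $u$ itself minimizes $\mathcal{F}_0$ in this class, since it solves $-\Delta_p u=f$ in $\mathcal{S}_{\alpha,T}$ and the functional is convex. By strict convexity, $v=u$.
\end{itemize}
Thus $\|\nabla u_{\varepsilon_k}\|_{L^p}\to\|\nabla u\|_{L^p}$, and weak convergence plus convergence of norms in the uniformly convex space $L^p$ gives strong convergence.

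The main obstacle I expect is the $C^2$ regularity up to the lateral boundary. It needs care on two points: that the reflected data remain H\"older continuous across $x_1=\pm\alpha$, which uses $f=0$ there, and the bootstrap from $C^{1,\beta}$ to $C^2$ for the regularized operator.
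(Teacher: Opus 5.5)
Your proposal is correct and follows the paper's proof essentially step by step. The steps are: the Direct Method plus strict convexity for $u_\varepsilon$; odd reflection in $x_1$ to $\mathcal{S}_{3\alpha,T}$ combined with regularity for the nondegenerate operator, where you are more explicit than the paper about the $C^{1,\beta}\to W^{2,2}_{\rm loc}\to C^{2,\gamma}$ bootstrap; and identification of the weak limit as the unique minimizer of $\mathcal{F}_0$, followed by norm convergence in the uniformly convex space $L^p$.

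The one point to phrase more carefully is why the reflected $u_\varepsilon$ solves the extended equation. Since $(-3\alpha,3\alpha)$ is not symmetric about $x_1=\pm\alpha$, a symmetry-plus-uniqueness argument does not apply verbatim. It is cleaner to verify the weak formulation directly, by folding any test function on $\mathcal{S}_{3\alpha,T}$ back onto $\mathcal{S}_{\alpha,T}$. The folded function lies in $W^{1,p}_0(\mathcal{S}_{\alpha,T})$, and the reflected $u_\varepsilon$ is in $W^{1,p}$ because $u_\varepsilon$ has zero trace on $x_1=\pm\alpha$.
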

\begin{proof}
Existence of a solution $u_\varepsilon$ can be inferred by using the Direct Method in the Calculus of Variations (see for example \cite[Theorem 4.3.3]{BraBook}). 
The uniqueness of the solution is a consequence of the strict convexity of the functional. Finally, the minimality condition is precisely given by the Euler-Lagrange equation
\[
-\mathrm{div}\nabla H_\varepsilon(\nabla u_\varepsilon)=\lambda\,|u|^{p-2}\,u,\qquad \text{in}\ \mathcal{S}_{\alpha,T},
\]
in weak form. 
\vskip.2cm\noindent
We now want to prove that $u_\varepsilon$ has the claimed regularity. We first recall that $u\in C^1(\overline{\mathcal{S}_{\alpha,T}})$ by Lemma \ref{lm:C1}.
In particular, we have that
\[
\lambda\,|u|^{p-2}\,u\in C^{0,\beta}(\overline{\mathcal{S}_{\alpha,T}}),
\]
possibly for an exponent $0<\beta\le 1$. 
We then extend both $u_\varepsilon$ and $u$ to the larger rectangle
\[
\mathcal{S}_{3\alpha,T}=(-3\alpha,3\alpha)\times(-T,T),
\]
by odd reflections with respect to the $x_1$ variable. We call $U_\varepsilon$ and $U$ the resulting functions, then we have that $U_\varepsilon$ weakly solves
\[
-\mathrm{div}\nabla H_\varepsilon(\nabla U_\varepsilon)=\lambda\,|U|^{p-2}\,U,\qquad \text{in}\ \mathcal{S}_{3\alpha,T}.
\]
Moreover, we have $U_\varepsilon-U\in W^{1,p}_0(\mathcal{S}_{3\alpha,T})$. Observe that the right-hand side is H\"older continuous and the equation is uniformly elliptic, thus we can infer that $U_\varepsilon\in C^{2,\gamma}(\overline{\mathcal{O}})$ for some $0<\gamma<1$, for every $\mathcal{O}\Subset \mathcal{S}_{3\alpha,T}$. In particular, by choosing $0<t<T$ and taking
\[
\mathcal{O}=\mathcal{S}_{\alpha,t},
\]
we get the desired conclusion, if we observe that $U_\varepsilon=u_\varepsilon$ on $\mathcal{S}_{\alpha,t}$.
Observe that $u_\varepsilon=0$ in classical pointwise sense on the lateral boundary of $\mathcal{S}_{\alpha,t}$, with $0<t<T$. Thus, the derivative in $x_2$ identlically vanishes, for $x_1=\pm \alpha$ and $|x_2|\le t$.
\vskip.2cm\noindent
In order to infer the strong convergence, we preliminary observe that 
\begin{equation}
\label{Hconverge}
\lim_{\varepsilon\to0} \int_{\mathcal{S}_{\alpha,T}} H_{\varepsilon}(\nabla \varphi)\,dx=\frac{1}{p}\,\int_{\mathcal{S}_{\alpha,T}} |\nabla \varphi|^p\,dx,\qquad \text{for every}\ \varphi\in W^{1,p}(\mathcal{S}_{\alpha,T}).
\end{equation}
Indeed, for every $\varepsilon>0$ and $z\in\mathbb{R}^N$, we have
\[
\begin{split}
\left|H_\varepsilon(z)-\frac{|z|^p}{p}\right|=\left|\frac{1}{p}\,(\varepsilon^2+|z|^2)^\frac{p}{2}-\frac{|z|^p}{p}\right|&=\left|\int_0^\varepsilon (\tau^2+|z|^2)^\frac{p-2}{2}\,\tau\,d\tau\right|\\
&\le \int_0^\varepsilon (\tau^2+|z|^2)^\frac{p-2}{2}\,|\tau|\,d\tau\\
&\le \int_0^\varepsilon (\tau^2+|z|^2)^\frac{p-2}{2}\,(\tau^2+|z|^2)^\frac{1}{2}\,d\tau\\
&\le (\varepsilon^2+|z|^2)^\frac{p-1}{2}\,\varepsilon.
\end{split}
\] 
This permits to infer that
\[
\begin{split}
\left|\int_{\mathcal{S}_{\alpha,T}} H_{\varepsilon}(\nabla \varphi)\,dx-\frac{1}{p}\,\int_{\mathcal{S}_{\alpha,T}} |\nabla \varphi|^p\,dx\right|&\le \varepsilon\,\int_{\mathcal{S}_{\alpha,T}}(\varepsilon^2+|\nabla\varphi|^2)^\frac{p-1}{2}\,dx\\
&\le \max\left\{1,2^\frac{p-3}{2}\right\}\,\varepsilon\,\left(\varepsilon^{p-1}\,|\mathcal{S}_{\alpha,T}|+\int_{\mathcal{S}_{\alpha,T}}|\nabla\varphi|^{p-1}\,dx\right),
\end{split}
\]
from which \eqref{Hconverge} easily follows.
\par
We now prove that the family $\{u_\varepsilon\}_{0<\varepsilon\le 1}$ is bounded in $W^{1,p}(\mathcal{S}_{\alpha,T})$. By virtue of the fact that $u\in W^{1,p}_0(\mathcal{S}_{\alpha})$ and using \eqref{brunopini}, it is not difficult to see that we still have 
\begin{equation}
\label{skorpio}
\left(\frac{\pi_p}{2\alpha}\right)^{p}\,\int_{\mathcal{S}_{\alpha,T}} |\varphi|^p\,dx\le \int_{\mathcal{S}_{\alpha,T}}|\nabla \varphi|^p\,dx,\quad \text{for}\ \varphi\in W^{1,p}(\mathcal{S}_{\alpha,T})\ \text{such that}\ \varphi-u\in W^{1,p}_0(\mathcal{S}_{\alpha,T}).
\end{equation}
Thus, for every admissible $\varphi$ we have by H\"older inequality and \eqref{skorpio}
\[
\begin{split}
\mathcal{F}_\varepsilon(\varphi)&\ge \frac{1}{p}\,\int_{\mathcal{S}_{\alpha,T}} |\nabla \varphi|^p\,dx-\lambda\,\|u\|_{L^p(\mathcal{S}_{\alpha,T})}^{p-1}\,\|\varphi\|_{L^p(\mathcal{S}_{\alpha,T})}\\
&\ge \frac{1}{p}\,\int_{\mathcal{S}_{\alpha,T}} |\nabla \varphi|^p\,dx-\frac{2\,\alpha\,\lambda}{\pi_p}\,\|u\|_{L^p(\mathcal{S}_{\alpha,T})}^{p-1}\,\|\nabla\varphi\|_{L^p(\mathcal{S}_{\alpha,T})}.
\end{split}
\]
By applying Young inequality in a standard way on the last term and using that $\|u\|_{L^p(\mathcal{S}_{\alpha,T})}\le \|u\|_{L^p(\Omega)}$, we get that 
\begin{equation}
\label{coercivo}
\mathcal{F}_\varepsilon(\varphi)\ge \frac{1}{2\,p}\,\int_{\mathcal{S}_{\alpha,T}} |\nabla \varphi|^p\,dx-C_{p,\lambda,\alpha}\,\|u\|_{L^p(\Omega)}^p.
\end{equation}
We can now test the minimality of the function $u_\varepsilon$ against $u$ and then use \eqref{coercivo}. We get in particular
\[
\frac{1}{2\,p}\,\int_{\mathcal{S}_{\alpha,T}} |\nabla u_\varepsilon|^p\,dx\le C_{p,\lambda,\alpha}\,\|u\|_{L^p(\Omega)}^p+\mathcal{F}_\varepsilon(u).
\] 
By noticing that for every $0<\varepsilon\le 1$ we have
\[
\mathcal{F}_\varepsilon(u)\le \max\left\{1,2^\frac{p-2}{2}\right\}\,\left(\frac{1}{p}\,\int_{\mathcal{S}_{\alpha,T}} |\nabla u|^p\,dx+\frac{1}{p}\,|\mathcal{S}_{\alpha,T}|\right)-\lambda\,\int_{\mathcal{S}_{\alpha,T}} |u|^p\,dx,
\]
we get the claimed uniform bound on the gradients. This in turn gives the uniform bound on the $W^{1,p}$ norm, by \eqref{skorpio}.
\par
Thus, there exists an infinitesimal sequence $\{\varepsilon_k\}_{k\in\mathbb{N}}$ of positive numbers and a function $v\in W^{1,p}(\mathcal{S}_{\alpha,T})$ such that $\{u_{\varepsilon_k}\}_{k\in\mathbb{N}}$ converges to $v$, weakly in $W^{1,p}(\mathcal{S}_{\alpha,T})$ and strongly in $L^p(\mathcal{S}_{\alpha,T})$. Moreover, we still have $v-u\in W^{1,p}_0(\mathcal{S}_{\alpha,T})$.
We claim that $v=u$: for every $\varphi\in W^{1,p}(\mathcal{S}_{\alpha,T})$ such that $\varphi-u\in W^{1,p}_0(\mathcal{S}_{\alpha,T})$, we have
\begin{equation}
\label{dorme}
\begin{split}
\frac{1}{p}\,\int_{\mathcal{S}_{\alpha,T}} |\nabla v|^p\,dx&-\lambda\,\int_{\mathcal{S}_{\alpha,T}} |u|^{p-2}\,u\,v\,dx\\
&\le \liminf_{k\to\infty}\left[ \frac{1}{p}\,\int_{\mathcal{S}_{\alpha,T}} |\nabla u_{\varepsilon_k}|^p\,dx-\lambda\,\int_{\mathcal{S}_{\alpha,T}} |u|^{p-2}\,u\,u_{\varepsilon_k}\,dx\right]\\
&\le \liminf_{k\to\infty}\mathcal{F}_{\varepsilon_k}(u_{\varepsilon_k})\\
&\le \lim_{k\to\infty}\mathcal{F}_{\varepsilon_k}(\varphi)=\frac{1}{p}\,\int_{\mathcal{S}_{\alpha,T}} |\nabla \varphi|^p\,dx-\lambda\,\int_{\mathcal{S}_{\alpha,T}} |u|^{p-2}\,u\,\varphi\,dx.
\end{split}
\end{equation}
In the third inequality, we used the minimality of $u_{\varepsilon_k}$, while in the last limit we used \eqref{Hconverge}. 
From \eqref{dorme} we get that the limit function $v$ is a minimizer of the functional
\[
\mathcal{F}_0(\varphi)=\frac{1}{p}\,\int_{\mathcal{S}_{\alpha,T}} |\nabla \varphi|^p\,dx-\lambda\,\int_{\mathcal{S}_{\alpha,T}} |u|^{p-2}\,u\,\varphi\,dx,
\]
in the class of functions
\[
\Big\{\varphi\in W^{1,p}(\mathcal{S}_{\alpha,T})\, :\, \varphi-u\in W^{1,p}_0(\mathcal{S}_{\alpha,T})\Big\}.
\]
The functional $\mathcal{F}_0$ is strictly convex and the set of admissible functions is convex, thus the minimizer is unique. It can be characterized as the unique weak solution of the relevant Euler-Lagrange equation, i.e.
\[
-\Delta_p v=\lambda\,|u|^{p-2}\,u,\qquad \text{in}\ \mathcal{S}_{\alpha,T}.
\]
By assumption, we know that $u$ is a solution of this equation and thus it coincides with the unique minimizer of $\mathcal{F}_0$. This discussion entails that $v=u$ and thus $\{u_{\varepsilon_k}\}_{k\in\mathbb{N}}$ converges to $u$, weakly in $W^{1,p}(\mathcal{S}_{\alpha,T})$ and strongly in $L^p(\mathcal{S}_{\alpha,T})$.
\par
Finally, by using this fact, we get that in \eqref{dorme} every inequality is actually an equality, when $\varphi=u$. In particular, we get
\[
\lim_{k\to\infty}\left[ \frac{1}{p}\,\int_{\mathcal{S}_{\alpha,T}} |\nabla u_{\varepsilon_k}|^p\,dx-\lambda\,\int_{\mathcal{S}_{\alpha,T}} |u|^{p-2}\,u\,u_{\varepsilon_k}\,dx\right]=\frac{1}{p}\,\int_{\mathcal{S}_{\alpha,T}} |\nabla u|^p\,dx-\lambda\,\int_{\mathcal{S}_{\alpha,T}} |u|^p\,dx.
\]
Since by weak convergence in $L^p(\Omega)$ we have
\[
\lim_{k\to\infty} \lambda\,\int_{\mathcal{S}_{\alpha,T}} |u|^{p-2}\,u\,u_{\varepsilon_k}\,dx=\lambda\,\int_{\mathcal{S}_{\alpha,T}} |u|^p\,dx,
\]
we must also have 
\[
\lim_{k\to\infty} \int_{\mathcal{S}_{\alpha,T}} |\nabla u_{\varepsilon_k}|^p\,dx=\int_{\mathcal{S}_{\alpha,T}} |\nabla u|^p\,dx.
\]
By uniform convexity of $L^p(\mathcal{S}_{\alpha,T})$, the first fact eventually gives the strong convergence of the gradients.
\end{proof}
\begin{oss}
By using standard reasonings based on the uniqueness of the minimizer for the limit functional $\mathcal{F}_0$, actually it is possible to prove convergence of the whole family $\{u_\varepsilon\}_{0<\varepsilon\le 1}$. We omit this fact, since it will not be needed.
\end{oss}
We now present the key tool in order to exclude the presence of eigenvalues on the strip. This is a Pohozaev-Rellich--type identity, inspired by the papers \cite{Re} and \cite{EL}.
\begin{prop}[Pohozaev-Rellich--type identity]
\label{prop:pocozaev}
Let $1<p<\infty$, if $u\in W^{1,p}_0(\mathcal{S}_\alpha)$ is a weak solution of
\[
-\Delta_p u=\lambda\,|u|^{p-2}\,u,\qquad \text{in}\ \mathcal{S}_\alpha,
\]
we have
\begin{equation}
\label{pozzaev}
\int_{\mathcal{S}_\alpha}|\nabla u|^{p-2}\,\left|\frac{\partial u}{\partial x_2}\right|^2\, dx=0.
\end{equation}
\end{prop}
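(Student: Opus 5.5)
The plan is to test the equation with the translation multiplier $\partial u/\partial x_2$, i.e. to use the invariance of the strip in the $x_2$ direction. Formally, multiplying $-\Delta_p u=\lambda|u|^{p-2}u$ by $\partial_2 u$ and integrating over $\mathcal{S}_{\alpha,t}$ gives the divergence identity
\[
\mathrm{div}\Big(|\nabla u|^{p-2}\nabla u\,\partial_2 u-\tfrac1p|\nabla u|^p e_2+\tfrac{\lambda}{p}|u|^p e_2\Big)=0 .
\]
The interior terms cancel, and only boundary terms remain. On the lateral sides $x_1=\pm\alpha$ we have $\partial_2 u=0$ and the normal is $\pm e_1$, so the flux vanishes. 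What survives are the horizontal segments $x_2=\pm t$. This formal identity does not yet give \eqref{pozzaev}. To obtain it, I would instead multiply by $x_2\,\partial_2 u$, as in Rellich's argument with a dilation in one variable.

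For this weighted multiplier the formal computation reads
\[
\mathrm{div}\Big(x_2\big[|\nabla u|^{p-2}\nabla u\,\partial_2 u-\tfrac1p|\nabla u|^p e_2+\tfrac{\lambda}{p}|u|^p e_2\big]\Big)
=|\nabla u|^{p-2}|\partial_2 u|^2-\tfrac1p|\nabla u|^p+\tfrac{\lambda}{p}|u|^p .
\]
Integrating over $\mathcal{S}_{\alpha,t}$, the lateral contributions again vanish because $\partial_2u=0$ there. The horizontal contributions are bounded by
\[
t\int_{\{x_2=\pm t\}}\big(|\nabla u|^p+|u|^p\big)\,dx_1 .
\]
Since $|\nabla u|^p+|u|^p\in L^1(\mathcal{S}_\alpha)$, we can choose $t_j\to\infty$ along which these traces are $o(1/t_j)$. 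Otherwise the integral of the slice integrals over $t\in(T,2T)$ would fail to be integrable against $dt/t$ tails, by the standard averaging argument.

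Passing to the limit then gives
\[
\int|\nabla u|^{p-2}|\partial_2 u|^2\,dx=\frac1p\int\big(|\nabla u|^p-\lambda|u|^p\big)\,dx .
\]
The right-hand side vanishes by the weak equation tested with $u$ itself. This yields \eqref{pozzaev}.

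The main obstacle is justifying this computation, since $u$ is only $C^1$ (Lemma \ref{lm:C1}) and the vector field involves second derivatives of $u$. I would use Lemma \ref{lm:approx2}. On $\mathcal{S}_{\alpha,T}$ we have the $C^2$ regularized solutions $u_\varepsilon$ of $-\mathrm{div}\nabla H_\varepsilon(\nabla u_\varepsilon)=\lambda|u|^{p-2}u$, and they satisfy $\partial_2u_\varepsilon=0$ on the lateral sides. For $u_\varepsilon$ I would perform the exact computation on $\mathcal{S}_{\alpha,t}$ with the multiplier $x_2\partial_2 u_\varepsilon$:
\[
\mathrm{div}\big(x_2\,\nabla H_\varepsilon(\nabla u_\varepsilon)\,\partial_2u_\varepsilon - x_2H_\varepsilon(\nabla u_\varepsilon)e_2\big)=\langle\nabla H_\varepsilon(\nabla u_\varepsilon),e_2\rangle\partial_2u_\varepsilon-H_\varepsilon(\nabla u_\varepsilon)-\lambda|u|^{p-2}u\,x_2\partial_2u_\varepsilon .
\]
The lateral terms drop out exactly. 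I would then let $\varepsilon_k\to0$ using the strong $W^{1,p}$ convergence. Note that $H_\varepsilon\to|z|^p/p$ leaves an extra $|\mathcal{S}_{\alpha,t}|\varepsilon^p$ term that vanishes, and the source term with $u$ fixed integrates by parts to $\frac{\lambda}{p}|u|^p$.

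The delicate point is the horizontal boundary terms $x_2=\pm t$. I would avoid pointwise traces by averaging in $t$: integrate the identity in $t$ over $(\tau,2\tau)$ with $\tau<T/2$. This turns the boundary contributions into bulk integrals over $\mathcal{S}_{\alpha,2\tau}\setminus\mathcal{S}_{\alpha,\tau}$, which pass to the limit under $W^{1,p}$ convergence. Those bulk integrals are bounded by
\[
C\int_{|x_2|>\tau}\big(|\nabla u|^p+|u|^p\big)\,dx\to0\qquad\text{as }\tau\to\infty .
\]
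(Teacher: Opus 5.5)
Your proposal is correct and follows essentially the same route as the paper. You regularize via Lemma \ref{lm:approx2} and use the multiplier $x_2\,\partial_2 u_{\varepsilon}$, with the lateral contributions killed by $\partial_2 u_\varepsilon=0$ on $x_1=\pm\alpha$. You then let $\varepsilon_k\to 0$, send the truncation to infinity, and conclude with $\int|\nabla u|^p\,dx=\lambda\int|u|^p\,dx$.

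Averaging your sharp truncations over $t\in(\tau,2\tau)$ gives exactly the paper's cut-off $\eta_R=\min\{(2R-|x_2|)_+/R,1\}$ with $R=\tau$. So your averaged horizontal boundary terms are precisely the paper's $x_2\,\partial_2\eta_R$ terms.
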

\begin{proof}
The idea of the proof is quite easy: we would like to test the weak formulation of the equation for $u$ with
\[
\varphi=x_2\,\frac{\partial u}{\partial x_2}.
\]
However, for $p\not=2$ we can not assure that this function is regular enough. For this reason, we will need to use an approximation argument, as in the proof of \cite[Theorem 1.1]{GV}. The unboundedness of the set will cause some additional troubles: here, Lemma \ref{lm:approx2} above will be crucial.
For ease of readability, we divide the proof in various steps.
\vskip.2cm\noindent
{\it Step 1: approximated identity}. For a fixed $R>0$, we consider the sequence $\{u_{\varepsilon_k}\}_{k\in\mathbb{N}}\subseteq W^{1,p}_0(\mathcal{S}_{\alpha,3R})$ obtained from Lemma \ref{lm:approx2}. We recall that we have 
\begin{equation}
\label{mercy}
\lim_{k\to\infty}\|u_{\varepsilon_k}- u\|_{W^{1,p}_0(\mathcal{S}_{\alpha,3R})} = 0.
\end{equation}
We can also suppose that 
\begin{equation}
\label{mercy2}
\lim_{k\to\infty}\nabla u_{\varepsilon_k}= \nabla u,\qquad \text{for a.\,e. in}\ x\in\mathcal{S}_{\alpha,3R},
\end{equation}
up to extract a subsequence.
We consider the ``longitudinal'' cut-off
\[
\eta_R(x_1,x_2) = \min \left\{ \frac{(2\,R-|x_2|)_+}{R},1 \right\}. 
\]
Recall that each $u_{\varepsilon_k}\in W^{1,p}_0(\mathcal{S}_{\alpha,3R})$ satisfies 
\[
\int_{\mathcal{S}_{\alpha,3R}} \langle\nabla H_{\varepsilon_k}(\nabla u_{\varepsilon_k}), \nabla \varphi\rangle\, dx = \lambda \int_{\mathcal{S}_{\alpha,3R}}|u|^{p-2} u \,\varphi\, dx,\quad  \text{ for every }\varphi \in W^{1,p}_0(\mathcal{S}_{\alpha,3R}). 
\]
We plug-in the following test function
\[
\varphi_\varepsilon  = \frac{\partial u_{\varepsilon_k}}{\partial x_2 }\, x_2\, \eta_{R},
\]
extended by zero for $2\,R<|x_2|<3\,R$.
Observe that this is feasible, thanks to Lemma \ref{lm:approx2} and the regularity of the boundary of $\mathcal{S}_{\alpha,3R}$. 
Accordingly, we get
\begin{equation}\label{testo}
\int_{\mathcal{S}_{\alpha,3R}} \left\langle \nabla H_{\varepsilon_k}(\nabla u_{\varepsilon_k}), \nabla\left( \frac{\partial u_{\varepsilon_k}}{\partial x_2}\,x_2\, \eta_{R}\right) \right\rangle\, dx = \lambda \int_{\mathcal{S}_{\alpha,3R}}|u|^{p-2} u \,\frac{\partial u_{\varepsilon_k}}{\partial x_2 }\, x_2\, \eta_{R}\, dx.
\end{equation}
We observe that
\[
\left\langle \nabla H_{\varepsilon_k}(\nabla u_{\varepsilon_k}), \nabla \frac{\partial u_{\varepsilon_k}}{\partial x_2} \right\rangle = \frac{\partial}{\partial x_2} H_{\varepsilon_k}(\nabla u_{\varepsilon_k}).
\]
Hence, we can rewrite the left-hand side in \eqref{testo} as 
\[
\begin{split}
\int_{\mathcal{S}_{\alpha,3R}} \left\langle \nabla H_{\varepsilon_k}(\nabla u_{\varepsilon_k}), \nabla\left( \frac{\partial u_{\varepsilon_k}}{\partial x_2}\,x_2\, \eta_{R}\right) \right\rangle\, dx  &=\int_{\mathcal{S}_{\alpha,3R}} \frac{\partial}{\partial x_2} H_{\varepsilon_k}(\nabla u_{\varepsilon_k}) \, x_2\, \eta_{R}\,  dx \\
&+ \int_{\mathcal{S}_{\alpha,3R}}\left(|\nabla u_{\varepsilon_k}|^2+ \varepsilon_k^2\right)^{\frac{p-2}{2}}\,\left|\frac{\partial u_{\varepsilon_k}}{\partial x_2}\right|^2\, \eta_{R}\, dx \\
&+ \int_{\mathcal{S}_{\alpha,3R}}\,\left(|\nabla u_{\varepsilon_k}|^2+ \varepsilon_k^2\right)^{\frac{p-2}{2}}\,\left|\frac{\partial u_{\varepsilon_k}}{\partial x_2}\right|^2\, x_2\, \frac{\partial \eta_R}{\partial x_2}\, \,dx .
\end{split}
\]
Integrating by parts in the first term of the right-hand side we end up with: 
\[
\begin{split}
\int_{\mathcal{S}_{\alpha,3R}} \left\langle \nabla H_{\varepsilon_k}(\nabla u_{\varepsilon_k}), \nabla\left( \frac{\partial u_{\varepsilon_k}}{\partial x_2}\,x_2\, \eta_{R}\right) \right\rangle\, dx  &=-\int_{\mathcal{S}_{\alpha,3R}} H_{\varepsilon_k}(\nabla u_{\varepsilon_k})\, \eta_{R}\,  dx \\
&- \int_{\mathcal{S}_{\alpha,3R}} H_{\varepsilon_k}(\nabla u_{\varepsilon_k})\,x_2\,\frac{\partial\eta_R}{\partial x_2}\, dx\\
&+ \int_{\mathcal{S}_{\alpha,3R}}\left(|\nabla u_{\varepsilon_k}|^2+ \varepsilon_k^2\right)^{\frac{p-2}{2}}\,\left|\frac{\partial u_{\varepsilon_k}}{\partial x_2}\right|^2\, \eta_{R}\, dx \\
&+ \int_{\mathcal{S}_{\alpha,3R}}\,\left(|\nabla u_{\varepsilon_k}|^2+ \varepsilon_k^2\right)^{\frac{p-2}{2}}\,\left|\frac{\partial u_{\varepsilon_k}}{\partial x_2}\right|^2\,x_2\,\frac{\partial \eta_R}{\partial x_2}\,  \,dx .
\end{split}
\]
As for the right hand side in \eqref{testo}, by using that
\[
|u_{\varepsilon_k}|^{p-2}\, u_{\varepsilon_k} \,\frac{\partial u_{\varepsilon_k}}{\partial x_2 }=\frac{1}{p}\,\frac{\partial |u_{\varepsilon_k}|^p}{\partial x_2},
\] 
we have
\[
\begin{split}
\int_{\mathcal{S}_{\alpha,3R}}|u|^{p-2}\, u \,\frac{\partial u_{\varepsilon_k}}{\partial x_2 }\, x_2\, \eta_{R}\, dx
&=\frac{1}{p}\,\int_{\mathcal{S}_{\alpha,3R}}\frac{\partial |u_{\varepsilon_k}|^p}{\partial x_2}\, x_2\, \eta_{R}\, dx\\
&+ \int_{\mathcal{S}_{\alpha,3R}}(|u|^{p-2}\, u - |u_{\varepsilon_k}|^{p-2}\, u_{\varepsilon_k} )\,\frac{\partial u_{\varepsilon_k}}{\partial x_2 }\, x_2\, \eta_{R}\, dx.
\end{split}
\]
A further integration by parts leads to 
\[
\begin{split}
\int_{\mathcal{S}_{\alpha,3R}}|u|^{p-2}\, u \,\frac{\partial u_{\varepsilon_k}}{\partial x_2 }\, x_2\, \eta_{R}\, dx & =-\frac{1}{p}\int_{\mathcal{S}_{\alpha,3R}} |u_{\varepsilon_k}|^p\, \eta_{R}\, dx- \frac{1}{p}\,\int_{\mathcal{S}_{\alpha,3R}} |u_{\varepsilon_k}|^p\, x_2\,\frac{\partial\eta_R}{\partial x_2}\, dx\\
&+ \int_{\mathcal{S}_{\alpha,3R}}(|u|^{p-2}\, u - |u_{\varepsilon_k}|^{p-2}\, u_{\varepsilon_k} )\,\frac{\partial u_{\varepsilon_k}}{\partial x_2 }\, x_2\, \eta_{R}\, dx.
\end{split}
\]
By collecting together these expressions, we deduce that \eqref{testo} can be equivalently rewritten as 
\begin{equation}\label{dapassare}
\begin{split}
\frac{\lambda}{p}\,\int_{\mathcal{S}_{\alpha,3R}} |u_{\varepsilon_k}|^p\, \eta_{R}\, dx&-\int_{\mathcal{S}_{\alpha,3R}} H_{\varepsilon_k}(\nabla u_{\varepsilon_k})\, \eta_{R}\,  dx\\
&+ \int_{\mathcal{S}_{\alpha,3R}}\left(|\nabla u_{\varepsilon_k}|^2+ \varepsilon_k^2\right)^{\frac{p-2}{2}}\, \left|\frac{\partial u_{\varepsilon_k}}{\partial x_2}\right|^2\, \eta_{R}\, dx \\
&=\int_{\mathcal{S}_{\alpha,3R}} \left(H_{\varepsilon_k}(\nabla u_{\varepsilon_k})- \frac{\lambda}{p}\,|u_{\varepsilon_k}|^p\right)\, x_2\,\frac{\partial\eta_R}{\partial x_2}\, dx\\
&- \int_{\mathcal{S}_{\alpha,3R}}\,\left(|\nabla u_{\varepsilon_k}|^2+ \varepsilon_k^2\right)^{\frac{p-2}{2}}\,\left|\frac{\partial u_{\varepsilon_k}}{\partial x_2}\right|^2\,x_2\,\frac{\partial \eta_R}{\partial x_2}\,dx\\
&+\lambda \int_{\mathcal{S}_{\alpha,3R}}(|u|^{p-2} \,u - |u_{\varepsilon_k}|^{p-2}\, u_{\varepsilon_k} )\,\frac{\partial u_{\varepsilon_k}}{\partial x_2 }\, x_2\, \eta_{R}\, dx.
 \end{split}
\end{equation}
We now wish to carefully pass to the limit in \eqref{dapassare}: first as $k$ diverges to $\infty$; then as $R$ diverges to $+\infty$. 
\vskip.2cm\noindent
{\it Step 2: taking the limit as $k\to\infty$.} We start from the terms in the right-hand side. By exploiting the strong convergence \eqref{mercy} and the elementary inequalities
\[
\Big||a|^{p-2}\,a-|b|^{p-2}\,b\Big|\le \left\{\begin{array}{ll}
C_p\,|a-b|^{p-1},& \text{if}\ 1<p\le 2,\\
&\\
C_p\,\left(a^2+b^2\right)^\frac{p-2}{2}\,|a-b|,& \text{if}\ p>2,
\end{array}
\right.
\]
it is not difficult to see that
\[
\lim_{\varepsilon\to 0}  \int_{\mathcal{S}_{\alpha,3R}}(|u|^{p-2} u - |u_{\varepsilon_k}|^{p-2} u_{\varepsilon_k} )\,\frac{\partial u_{\varepsilon_k}}{\partial x_2 }\, x_2\, \eta_{R}\, dx=0.
\]
All the other terms can be easily handled by using \eqref{Hconverge}  and the strong convergence \eqref{mercy}, {\it except for} the two containing
\[
\left(|\nabla u_{\varepsilon_k}|^2+ \varepsilon_k^2\right)^{\frac{p-2}{2}}\left|\frac{\partial u_{\varepsilon_k}}{\partial x_2}\right|^2,
\]
which are the most delicate ones (at least for $p<2$). For them, we can proceed as follows: we decompose this integrand 
\[
\left(\left(|\nabla u_{\varepsilon_k}|^2+ \varepsilon_k^2\right)^{\frac{p-2}{2}}\,\frac{\partial u_{\varepsilon_k}}{\partial x_2}\right)\,\frac{\partial u_{\varepsilon_k}}{\partial x_2}=:f_k\,g_k,
\]
and observe that $\{g_k\}_{k\in\mathbb{N}}$ converges strongly in $L^p(\mathcal{S}_{\alpha,3R})$, still thanks to \eqref{mercy}. On the other hand, $\{f_k\}_{k\in\mathbb{N}}$ is uniformly bounded in $L^{p'}(\mathcal{S}_{\alpha,3R})$. Thus, it weakly conveges in $L^{p'}(\mathcal{S}_{\alpha,3R})$ to some limit function $f$, up to a subsequence. We observe that \eqref{mercy2} gives
\[
\lim_{k\to \infty} f_k=|\nabla u|^{p-2}\,\frac{\partial u}{\partial x_2}, \qquad \text{a.\,e. in}\ \mathcal{S}_{\alpha,3R}.
\]
This fact permits to identify the weak limit $f$, i.e. we have
\[
f=|\nabla u|^{p-2}\,\frac{\partial u}{\partial x_2}, \qquad \text{a.\,e. in}\ \mathcal{S}_{\alpha,3R},
\]
see for example \cite[Chapitre 1, Lemme 4.8]{Ka}. By noticing that
\[
\eta_R\in L^\infty(\mathcal{S}_{\alpha,3R})\qquad \text{and}\qquad x_2\,\frac{\partial \eta}{\partial x_2}\in L^\infty(\mathcal{S}_{\alpha,3R}),
\]
the previous discussion assures that we can safely pass to the limit in \eqref{dapassare}, as $k$ diverges to $\infty$. This gives
\begin{equation}\label{dapassare2}
\begin{split}
\frac{\lambda}{p}\,\int_{\mathcal{S}_{\alpha,3R}} |u|^p\, \eta_{R}\, dx-\frac{1}{p}\,\int_{\mathcal{S}_{\alpha,3R}} |\nabla u|^p\, \eta_{R}\,  dx&+ \int_{\mathcal{S}_{\alpha,3R}}|\nabla u|^{p-2}\,\left|\frac{\partial u}{\partial x_2}\right|^2\, \eta_{R}\, dx \\
&=\int_{\mathcal{S}_{\alpha,3R}} \left(\frac{1}{p}\,|\nabla u|^p- \frac{\lambda}{p}\,|u|^p\right)\, x_2\,\frac{\partial\eta_R}{\partial x_2}\, dx\\
&- \int_{\mathcal{S}_{\alpha,3R}}\,|\nabla u|^{p-2}\,\left|\frac{\partial u}{\partial x_2}\right|^2\,x_2\,\frac{\partial \eta_R}{\partial x_2}\,dx.
 \end{split}
\end{equation}
\vskip.2cm\noindent
{\it Step 3: taking the limit as $R\to+\infty$.} The identity \eqref{dapassare2} holds for every $R>0$.
By using the properties of $\eta_R$ and the fact that $u\in W^{1,p}_0(\mathcal{S}_{\alpha,3R})$, from the Dominated Convergence Theorem we get
\[
\begin{split}
\lim_{R\to+\infty} \left[\frac{\lambda}{p}\,\int_{\mathcal{S}_{\alpha,3R}} |u|^p\, \eta_{R}\, dx\right.&\left.-\frac{1}{p}\,\int_{\mathcal{S}_{\alpha,3R}} |\nabla u|^p\, \eta_{R}\,  dx+ \int_{\mathcal{S}_{\alpha,3R}}|\nabla u|^{p-2}\,\left|\frac{\partial u}{\partial x_2}\right|^2\, \eta_{R}\, dx \right]\\
&=\frac{\lambda}{p}\,\int_{\mathcal{S}_\alpha} |u|^p\, dx-\frac{1}{p}\,\int_{\mathcal{S}_\alpha} |\nabla u|^p\, dx+ \int_{\mathcal{S}_\alpha}|\nabla u|^{p-2}\,\left|\frac{\partial u}{\partial x_2}\right|^2\,dx. 
\end{split}
\]
As for the right-hand side of \eqref{dapassare2}, we can observe that 
\[
|\nabla u|^{p-2}\,\left|\frac{\partial u}{\partial x_2}\right|^2\in L^1(\mathcal{S}_{\alpha,R}),
\]
and
\[
\left|x_2\,\frac{\partial \eta_R}{\partial x_2}\right|\le 2\cdot\,1_{\mathcal{S}_{\alpha,2R}\setminus \mathcal{S}_{\alpha,R}},\qquad \text{a.\,e. in}\ \mathcal{S}_\alpha,
\]
thanks to the properties of $\eta_R$. Thus, again by the Dominated Convergence Theorem we get
\[
\lim_{R\to+\infty}\int_{\mathcal{S}_{\alpha,3R}}\left[ \left(\frac{1}{p}\,|\nabla u|^p- \frac{\lambda}{p}\,|u|^p\right)\, x_2\,\frac{\partial\eta_R}{\partial x_2}\, dx- \int_{\mathcal{S}_{\alpha,3R}}\,|\nabla u|^{p-2}\,\left|\frac{\partial u}{\partial x_2}\right|^2\,x_2\,\frac{\partial \eta_R}{\partial x_2}\,dx\right]=0.
\]
Finally, from \eqref{dapassare2}, we end up with
\begin{equation}
\label{dapassare3}
\frac{\lambda}{p}\,\int_{\mathcal{S}_{\alpha}} |u|^p\, dx-\frac{1}{p}\,\int_{\mathcal{S}_{\alpha}} |\nabla u|^p\, dx+ \int_{\mathcal{S}_{\alpha}}|\nabla u|^{p-2}\,\left|\frac{\partial u}{\partial x_2}\right|^2\,dx=0.
\end{equation}
{\it Step 4: conclusion.} We are only left with using the equation for $u$, which gives
\[
\int_{\mathcal{S}_{\alpha}} |\nabla u|^p\,dx=\lambda\,\int_{\mathcal{S}_{\alpha}} |u|^p\,dx.
\]
Thus, from \eqref{dapassare3} we get the claimed identity \eqref{pozzaev}. 
\end{proof}
\begin{oss}
\label{oss:qppocozallo}
By repeating verbatim the previous arguments, one could also obtain the following identity
\[
\int_{\mathcal{S}_\alpha}|\nabla u|^{p-2}\,\left|\frac{\partial u}{\partial x_2}\right|^2\, dx=\left(\frac{1}{p}-\frac{1}{q}\right)\,\int_{\mathcal{S}_\alpha} |\nabla u|^p\,dx,
\]
for every $u\in W^{1,p}_0(\Omega)\cap L^q(\Omega)$ weakly solving
\[
-\Delta u_p=\lambda\,|u|^{q-2}\,u,\qquad \text{in}\ \Omega,
\]
where
\[
\left\{\begin{array}{ll}
1<q\le p^*,& \text{if}\ 1<p<2,\\
1<q<+\infty, & \text{if}\ p\ge 2.
\end{array}
\right.
\]
We will not need this more general fact, we leave the details to the interested reader.
\end{oss}
We are ready for the main result of the section.
\begin{teo}
\label{teo:striscia}
For every $1<p<\infty$ we have
\[
\mathfrak{S}_{\mathrm{ess}, p}(\mathcal{S}_\alpha) = \left[\left(\frac{\pi_p}{2\,\alpha}\right)^p, +\infty\right)\qquad \text{and}\qquad \mathfrak{S}_{{\rm eigen},p}(\mathcal{S}_\alpha)=\emptyset.
\]
\end{teo}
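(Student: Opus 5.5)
The proof splits into three parts: absence of eigenvalues, the lower bound for the essential spectrum, and the construction of singular Palais-Smale sequences at every level $\lambda>(\pi_p/(2\alpha))^p$.

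\emph{Absence of eigenvalues.} Let $u\in W^{1,p}_0(\mathcal{S}_\alpha)$ be a weak solution for some $\lambda$. Necessarily $\lambda\ge\lambda_p(\mathcal{S}_\alpha)>0$, so Proposition \ref{prop:pocozaev} applies and gives $|\nabla u|^{p-2}|\partial_{x_2}u|^2=0$ a.e. Hence $\partial_{x_2}u=0$ a.e.: where $\nabla u\neq 0$ the weight is positive, and where $\nabla u=0$ the derivative vanishes trivially. Thus $u(x_1,x_2)=g(x_1)$ for a.e. $x$. Since $u\in L^p(\mathcal{S}_\alpha)$, we get $\int_{\mathbb{R}}\int_{-\alpha}^{\alpha}|g|^p\,dx_1\,dx_2<\infty$, which forces $g=0$. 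Therefore $u\equiv0$ and $\mathfrak{S}_{{\rm eigen},p}(\mathcal{S}_\alpha)=\emptyset$.

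\emph{Lower bound.} Each set $\mathcal{S}_\alpha\setminus\overline{B_R}$ is contained in $\mathcal{S}_\alpha$, so by monotonicity and \eqref{brunopini} we have $\lambda_p(\mathcal{S}_\alpha\setminus\overline{B_R})\ge(\pi_p/2\alpha)^p$. For the reverse inequality, take a near-optimal test function for $\mathcal{S}_\alpha$ with compact support and translate it vertically far away; the sup over $R$ is then also $\le(\pi_p/2\alpha)^p$. Hence $\mathcal{E}_p(\mathcal{S}_\alpha)=(\pi_p/2\alpha)^p$. The Main Theorem then gives $\mathfrak{S}_{{\rm ess},p}\subseteq[(\pi_p/2\alpha)^p,\infty)$, with the endpoint included.

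\emph{Every $\lambda>(\pi_p/2\alpha)^p$ belongs to the essential spectrum.} By scaling and continuity, the map $\ell\mapsto\lambda_p(\mathcal{S}_{\alpha,\ell})$ is continuous and strictly decreasing, from $+\infty$ as $\ell\to0$ to $(\pi_p/2\alpha)^p$ as $\ell\to\infty$. So I can pick $\ell$ with $\lambda_p(\mathcal{S}_{\alpha,\ell})=\lambda$, and let $w\ge0$ be the corresponding first eigenfunction, which is $C^1$ up to the boundary by Lemma \ref{lm:C1}. Reflect $w$ oddly across the horizontal sides $x_2=\pm\ell$ to build $\psi_n$ on $(-\alpha,\alpha)\times(-(2n+1)\ell,(2n+1)\ell)$, set it to zero elsewhere, and let $u_n=\psi_n/\|\psi_n\|_{L^p}$. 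In the interior, $\psi_n$ solves the equation across the reflection lines: odd reflection of a $C^1$ solution vanishing on a flat boundary piece remains a weak solution, because the boundary fluxes match.

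The remainder $-\Delta_p\psi_n-\lambda|\psi_n|^{p-2}\psi_n$ is then a measure concentrated on the two top and bottom segments, with density $|\partial_{x_2}w|^{p-1}$ there. This is bounded in $W^{-1,p'}$ by $C\,\|\varphi\|_{L^p(\text{trace})}\le C\|\varphi\|_{W^{1,p}}$, with $C$ independent of $n$ by the trace inequality on a fixed-size rectangle near each segment. Meanwhile $\|\psi_n\|_{L^p}^p=(2n+1)\|w\|_p^p$, so after normalization the error is $O(n^{-1/p})\to0$. Also $\int|\nabla u_n|^p=\lambda$ exactly, and $\|u_n\|_{L^p(B_R)}^p\le C_R/(2n+1)\to0$, so the sequence is singular.

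The main obstacle is the rigorous justification that the reflected function is a weak solution away from the cut, together with the precise form of the boundary term. I would test against $\varphi\in C^\infty_0$ cell by cell: on each cell, integrating by parts with $w\in C^1(\overline{\mathcal{S}_{\alpha,\ell}})$ gives boundary fluxes. These fluxes cancel pairwise on interior interfaces by the reflection symmetry, and survive only on the outer ends. If the integration by parts with only $C^1$ regularity is delicate, I would instead approximate with the regularized problems as in Lemma \ref{lm:approx2}.
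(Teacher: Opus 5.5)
Your proposal is correct and follows the paper's proof essentially step by step: the Pohozaev--Rellich identity of Proposition \ref{prop:pocozaev} rules out eigenvalues, the identity $\mathcal{E}_p(\mathcal{S}_\alpha)=(\pi_p/2\alpha)^p$ combined with the Main Theorem handles the bottom of the spectrum, and the $2n+1$ odd reflections of the first eigenfunction of a rectangle give singular Palais--Smale sequences at every higher level, with the outer boundary flux controlled by the trace inequality of Lemma \ref{lm:traccia}. One small slip: since $-\Delta_p$ is $(p-1)$-homogeneous, the normalized error is of order $(2n+1)^{-(p-1)/p}$ rather than $n^{-1/p}$, and it still tends to zero.
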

\begin{proof}
We divide the proof in two parts, according to the part of the spectrum we are considering.
\vskip.2cm\noindent
{\it Part 1: eigenvalues}. Let us suppose that $\lambda\in \mathfrak{S}_{{\rm eigen},p}(\mathcal{S}_\alpha)$. By Proposition \ref{prop:pocozaev}, we get that for an associated eigenfunction  $u\in W^{1,p}(\mathcal{S}_\alpha)\setminus\{0\}$ we must have
\[
\left(\left|\frac{\partial u}{\partial x_1}\right|^2+\left|\frac{\partial u}{\partial x_2}\right|^2\right)^\frac{p-2}{2}\,\left|\frac{\partial u}{\partial x_2}\right|^2=0,\qquad \text{a.\,e. in}\ \mathcal{S}_\alpha.
\]
This in turn gives that $\partial u/\partial x_2$ must vanish almost everywhere in $\mathcal{S}_\alpha$. Thus, the function $u$ only depends on the variable $x_1$. Since $u\in L^p(\mathcal{S}_\alpha)$ and recalling the definition of $\mathcal{S}_\alpha$, this fact in particular gives that $u$ must vanish almost everywhere. This gives the desired contradiction.
\vskip.2cm\noindent
{\it Part 2: essential spectrum}. We already know that
\[
\mathfrak{S}_{\mathrm{ess},p}(\mathcal{S}_\alpha)\subseteq \big[\lambda_p(\mathcal{S}_\alpha), +\infty\big),
\] 
by the Corollary in the Introduction. Observe that
\[
\mathcal{E}_p(\mathcal{S}_\alpha) = \lambda_p(\mathcal{S}_\alpha),
\]
which follows rather easily from the definition of $\mathcal{E}_p$ and the translation invariance of $\lambda_p$.  Furthermore, we have already noted in \eqref{brunopini} that 
\[
\lambda_p(\mathcal{S}_\alpha) = \left(\frac{\pi_p}{2\,\alpha}\right)^p.
\] 
In view of the last two facts, to conclude the proof we need to prove that that for every 
\[
\lambda>\left(\frac{\pi_p}{2\,\alpha}\right)^p,
\] 
there exists a singular constrained Palais-Smale sequence at level $\lambda$. To this aim, we will suitably adapt the construction used in the proof of \cite[Proposition 6.1]{BiaBraOgn}. The main technical obstruction lies in the fact that for $p\not=2$ we can not explicitly write the first eigenpair of a rectangle: we will show below that actually we can circumvent this problem.
\par
Let $\lambda$ be such a level. We recall that the function 
\[
(0,+\infty)\ni\ell  \mapsto \lambda_p((-\alpha,\alpha)\times (-\ell,\ell)),
\]
is continuous (see for example \cite[Lemma 2.3]{Bob}) and satisfies
\[
\lim_{\ell\to 0^+}\lambda_p((-\alpha,\alpha)\times (-\ell,\ell)) = +\infty\qquad  \text{and}\qquad \lim_{\ell\to +\infty}\lambda_p((-\alpha,\alpha)\times (-\ell,\ell))=\left(\frac{\pi_p}{2\,\alpha}\right)^p.
\]
Hence, there exists $T=T(\lambda,\alpha)>0$ such that
\[
\lambda_p(\mathcal{S}_{\alpha,T})=\lambda.
\]
Since $\mathcal{S}_{\alpha,T}$ is a bounded open set, we have that $\lambda \in \mathfrak{S}_{\mathrm{eigen}, p}(\mathcal{S}_{\alpha,T})$, i.e. the bottom of the spectrum $\lambda_p(\mathcal{S}_{\alpha,T})$ is attained. Thus, there exists $\psi\in W^{1,p}_0(\mathcal{S}_{\alpha,T})\setminus\{0\}$ which satisfies
\[
\int_{\mathcal{S}_{\alpha,T}} \langle |\nabla \psi|^{p-2} \nabla \psi, \nabla \varphi\rangle\,dx= \lambda\, \int_{\mathcal{S}_{\alpha,T}}|\psi|^{p-2}\, \psi\,\varphi\,dx, \qquad \text{for every}\ \varphi\in W^{1,p}_0(\mathcal{S}_{\alpha,T}).
\] 
Of course, it is not restrictive to  assume that $\|\psi\|_{L^{p}(\mathcal{S}_{\alpha,T})}=1$. 
\par
We will construct the desired constrained Palais-Smale sequence by considering a suitable sequence of extensions of $\psi$, made of odd reflections as in Lemma \ref{lm:C1}. More precisely, 
for every $k\in\mathbb{Z}$, we consider the translated set
\[
\mathcal{S}_{\alpha,T}+2\,k\,T\,\mathbf{e}_2 = (-\alpha, \alpha) \times \big((2\,k-1)\,T, (2\,k+1)\,T\big),
\]
and define
\[
\psi_{k}(x_1,x_2) = (-1)^{|k|}\, \psi \Big(x_1, (-1)^{|k|}\, (x_2 - 2\,k\,T)\Big) , \quad \text{ for every }(x_1,x_2)\in\mathcal{S}_{\alpha,T}+2\,k\,T\,\mathbf{e}_2. 
\]
Then, for every $n\in\mathbb{N}$ we set 
\[
u_n = \left(\frac{1}{2\,n+1}\right)^{\frac{1}{p}}\sum_{k=-n}^{n} \psi_k.
\]
Observe that $u_n\in W^{1,p}_0(\mathcal{S}_{\alpha,(2\,n+1)\,T})$ and
\begin{equation}
\label{eguassione}
-\Delta_p u_n = \lambda\,|u_n|^{p-2}\,u_n,\quad  \text{in}\ \mathcal{S}_{\alpha,(2\,n+1)\,T},
\end{equation}
in weak sense.
We claim that $u_n$ is a  singular constrained Palais-Smale sequence at the level $\lambda$.
Indeed, since for every $k\in\mathbb{Z}$, it holds
\[
\psi_k\in W^{1,p}_0(\mathcal{S}_{\alpha,T}+2k\,T\,\mathbf{e}_2)\setminus\{0\},\qquad \int_{\mathcal{S}_{\alpha,T}+2k\mathbf{e}_2}|\psi_k|^p\,dx=1,\qquad \int_{\mathcal{S}_{\alpha,T}+2k\mathbf{e}_2}|\nabla \psi_k|^p\,dx = \lambda,
\]
we immediately have 
\[
u_n\in W^{1,p}_0(\mathcal{S}_{\alpha,(2\,n+1)\,T})\setminus \{0\}, \qquad \int_{\mathcal{S}_{\alpha,(2\,n+1)\,T}}|u_n|^p\,dx=1,\qquad \int_{\mathcal{S}_{\alpha,(2\,n+1)\,T}}|\nabla u_n|^p\,dx = \lambda, 
\]
for every $n\in\mathbb{N}$. Thus, the first two  assumptions in the definition of constrained Palais-Smale sequence are satisfied. 
\par
As for the third condition, we determine the equation weakly solved by $u_n$ on $\mathcal{S}_{\alpha}$. To this aim,
for every $0<\varepsilon<T$ and every $n\in\mathbb{N}$, we introduce the following piecewise linear cut-off function  $\eta_{\varepsilon,n}\in W^{1,\infty}(\mathbb{R}^2)$
\begin{equation}\label{cutoff1d}
\eta_{\varepsilon,n}(x_1,x_2)=\min\left\{\frac{(T_n-|x_2|)_+}{\varepsilon},1\right\},\qquad \text{where}\ T_n:=(2\,n+1)\,T.
\end{equation}
Thus, for every $\varphi\in W^{1,p}_0(\mathcal{S}_\alpha)$, by the Dominated Convergence Theorem we have
\begin{equation}\label{terza1}
\begin{split}
\int_{\mathcal{S}_\alpha} \langle |\nabla u_n|^{p-2}\, \nabla u_n, \nabla \varphi\rangle\,dx &=\lim_{\varepsilon\to 0}\int_{\mathcal{S}_\alpha} \langle |\nabla u_n|^{p-2}\, \nabla u_n, \nabla \varphi\rangle\, \eta_{\varepsilon, n}\, dx\\
\end{split}.
\end{equation}
On the other hand, for every $\varepsilon>0$ we can rewrite
\[
\begin{split}
\int_{\mathcal{S}_\alpha} \langle |\nabla u_n|^{p-2}\, \nabla u_n, \nabla \varphi\rangle\, \eta_{\varepsilon, n}\, dx&=\int_{\mathcal{S}_\alpha} \langle |\nabla u_n|^{p-2}\, \nabla u_n, \nabla (\varphi\, \eta_{\varepsilon, n})\rangle\, dx\\
&-\int_{\mathcal{S}_\alpha} \langle |\nabla u_n|^{p-2}\, \nabla u_n, \nabla \eta_{\varepsilon, n}\rangle\,\varphi\, dx\\
&=\lambda\,\int_{\mathcal{S}_\alpha}|u_n|^{p-2}\, u_n\, \varphi\,\eta_{\varepsilon,n}\,dx\\
&-\int_{\mathcal{S}_\alpha} \langle |\nabla u_n|^{p-2}\, \nabla u_n, \nabla \eta_{\varepsilon, n}\rangle\,\varphi\, dx.
\end{split}
\]
In the second equality we used the weak formulation of  \eqref{eguassione}, tested with $\varphi\,\eta_{\varepsilon,n}\in W^{1,p}_0(\mathcal{S}_{\alpha,T_n})$. We observe that, again by the Dominated Convergence Theorem, we have 
\[
\lim_{\varepsilon\to 0}\int_{\mathcal{S}_\alpha}|u_n|^{p-2}\, u_n\, \varphi\,\eta_{\varepsilon,n}\,dx=\int_{\mathcal{S}_\alpha}|u_n|^{p-2}\, u_n\, \varphi\,dx.
\]
As for the integral containing $\nabla \eta_{\varepsilon,n}$: we use the definition of $u_n$, that of the cut off-function \eqref{cutoff1d} and Fubini's theorem, so to obtain
\[
\begin{split}
\int_{\mathcal{S}_\alpha} \langle |\nabla u_n|^{p-2}\, \nabla u_n, \nabla \eta_{\varepsilon, n}\rangle\,\varphi\, dx &=
\left(\frac{1}{2n+1}\right)^{\frac{p-1}{p}}\,\frac{1}{\varepsilon}\int_{-T_n}^{-T_n+\varepsilon} \left(\int_{-\alpha}^\alpha |\nabla u_n|^{p-2}\,\frac{\partial u_n}{\partial x_2}\, \varphi\,dx_1\right)\,dx_2\\
&-\left(\frac{1}{2n+1}\right)^{\frac{p-1}{p}}\,\frac{1}{\varepsilon}\int_{T_n-\varepsilon}^{T_n} \left(\int_{-\alpha}^\alpha |\nabla u_n|^{p-2}\,\frac{\partial u_n}{\partial x_2}\, \varphi\,dx_1\right)\,dx_{2}.
\end{split}
\]
Observe that $u_n\in C^1(\overline{\mathcal{S}_{\alpha,T_n}})$, by Lemma \ref{lm:C1}. Thus, the previous quantity admits limit as $\varepsilon$ goes to $0$, given by
\[
\begin{split}
\kappa_n & := \left(\frac{1}{2n+1}\right)^{\frac{p-1}{p}}\, \int_{-\alpha}^\alpha |\nabla u_n(x_1, T_n)|^{p-2}\,\frac{\partial u_n}{\partial x_2}(x_1, T_n)\, \varphi(x_1, T_n) \,dx_1\\
&-\left(\frac{1}{2n+1}\right)^{\frac{p-1}{p}}\,\int_{-\alpha}^\alpha |\nabla u_{n}(x_1, -T_n)|^{p-2}\,\frac{\partial u_{n}(x_1, -T_n)}{\partial x_2}\, \varphi(x_1, T_{-n})\,dx_1.
\end{split}
\]
By \eqref{terza1} and the above discussion, we can thus obtain that 
\begin{equation}\label{terza2}
\begin{split}
\left\langle -\Delta_p u_n -\lambda\, |u_n|^{p-2}\,u_n, \varphi\right\rangle_{(W^{-1,p'}(\mathcal{S}_\alpha), W^{1,p}(\mathcal{S}_\alpha))}=\kappa_n.
\end{split}
\end{equation}
By using the trace inequality of Lemma \ref{lm:traccia} below, from the definition of $\kappa_n$ we get
\[
\begin{split}
|\kappa_n| & \le 4\,\alpha^\frac{p-1}{p}\, \left(\frac{1}{2n+1}\right)^{\frac{p-1}{p}}\,\|\nabla\psi\|_{L^\infty(\mathcal{S}_{\alpha,T})}^{p-1}\, \|\varphi\|_{W^{1,p}(\mathcal{S}_\alpha)}.
\end{split}
\]
By plugging this inequality into \eqref{terza2}, we finally get
\[
\left\|-\Delta_p u_n -\lambda\, |u_n|^{p-2}\,u_n\right\|_{W^{-1,p'}(\mathcal{S}_\alpha)}\le 4\,\alpha^\frac{p-1}{p}\,\left(\frac{1}{2\,n+1}\right)^{\frac{p-1}{p}} \,\|\nabla\psi\|_{L^\infty(\mathcal{S}_{\alpha,T})}^{p-1}.
\]
This shows that $\{u_n\}_{n\in\mathbb{N}}$ is a constrained Palais-Smale sequence at the level $\lambda$. In order to conclude, we still have to prove that the sequence is singular, that is 
\[
\lim_{n\to\infty}\|u_n\|_{L^{p}(\mathcal{S}_\alpha\cap B_R)}=0,\qquad \text{for every}\ R>0.
\]
This is simple: for every $R>0$, we have that there exists $n_R\in\mathbb{N}$ such that
\[
\mathcal{S}_\alpha\cap B_R\subseteq \mathcal{S}_{\alpha,T_{n_R}}.
\]
Accordingly, by construction we get
\[
\|u_n\|_{L^{p}(\mathcal{S}_\alpha\cap B_R)}\le \left(\frac{1}{2\,n+1}\right)^{\frac{1}{p}}\,(2\,n_R+1).
\]
By taking the limit as $n$ diverges to $\infty$, we get the desired conclusion. This concludes the proof.
\end{proof}
\begin{oss}
We notice that the previous argument to exclude existence of eigenfunctions in $\mathcal{S}_\alpha$ can be generalized to prove that 
\[
-\Delta_p u=\lambda\,|u|^{q-2}\,u,\qquad \text{in}\ \mathcal{S}_\alpha,
\]
does not admit any nontrivial solution $u\in W^{1,p}_0(\mathcal{S}_\alpha)\cap L^q(\mathcal{S}_\alpha)$, in the {\it sub-homogeneous regime} $1<q<p$.
Indeed, in light of Remark \ref{oss:qppocozallo}, we get 
\[
\int_{\mathcal{S}_\alpha}|\nabla u|^{p-2}\,\left|\frac{\partial u}{\partial x_2}\right|^2\, dx\le 0,
\]
thanks to the fact that $1<q<p$. This is still sufficient to replicate the argument above. 
\par 
We recall that {\it this non-existence result in the strip is sharp}. Indeed, for a sub-critical exponent $q>p$ we do have existence of non-trivial solutions (see \cite[Theorem 7.5 \& Corollary 7.6]{AmTo} and \cite[Theorem 5.1]{BBT} for the case $p=2$, \cite[Theorem 5.1]{BraBriPri_steiner} for the general case).
\end{oss}
In the previous result, we used the following trace inequality. We include the proof, for completeness. 
\begin{lm}
\label{lm:traccia}
Let $1\le p<\infty$, for every $T\in\mathbb{R}$ we have
\[
\|\varphi\|_{L^1((-\alpha,\alpha)\times\{T\})}\le 2\,\alpha^\frac{p-1}{p}\,\|\varphi\|_{W^{1,p}(\mathcal{S}_\alpha)},\qquad \text{for every}\ \varphi\in C^\infty_0(\mathcal{S}_\alpha).
\]
\end{lm}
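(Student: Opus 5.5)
The plan is a one-dimensional fundamental-theorem-of-calculus argument in the vertical variable $x_2$, made global by an exponential weight, followed by H\"older's inequality. Since $\varphi\in C^\infty_0(\mathcal{S}_\alpha)$ has compact support, $|\varphi|$ is Lipschitz and vanishes for $x_2$ large. I will never need the lateral boundary: the argument works on each vertical line $\{x_1\}\times\mathbb{R}$ separately.

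\emph{Step 1 (pointwise bound).} Fix $x_1\in(-\alpha,\alpha)$ and set $g(t)=|\varphi(x_1,t)|\,e^{-(t-T)}$ for $t\ge T$. Then $g(T)=|\varphi(x_1,T)|$ and $g(t)=0$ for large $t$, so
\[
|\varphi(x_1,T)|=-\int_T^{+\infty} g'(t)\,dt\le \int_T^{+\infty}\Big(|\varphi(x_1,t)|+\Big|\frac{\partial\varphi}{\partial x_2}(x_1,t)\Big|\Big)\,e^{-(t-T)}\,dt .
\]
Here I use that $\big|\partial_t|\varphi|\big|\le|\partial_{x_2}\varphi|$ almost everywhere. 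The exponential weight is chosen because it avoids introducing any length scale in $x_2$; this is what keeps the constant independent of $T$.

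\emph{Step 2 (integrate in $x_1$ and apply H\"older).} Integrate the pointwise bound over $x_1\in(-\alpha,\alpha)$ and apply H\"older's inequality on $(-\alpha,\alpha)\times(T,+\infty)$, separately to the two terms. The common factor is
\[
\Big(\int_{-\alpha}^{\alpha}\int_T^{+\infty}e^{-p'(t-T)}\,dt\,dx_1\Big)^{\frac1{p'}}=\Big(\frac{2\alpha}{p'}\Big)^{\frac{p-1}{p}} .
\]
This gives
\[
\|\varphi\|_{L^1((-\alpha,\alpha)\times\{T\})}\le \Big(\frac{2}{p'}\Big)^{\frac{p-1}{p}}\alpha^{\frac{p-1}{p}}\Big(\|\varphi\|_{L^p(\mathcal{S}_\alpha)}+\|\nabla\varphi\|_{L^p(\mathcal{S}_\alpha)}\Big).
\]
For $p=1$ the weight is bounded by $1$, and the estimate reads directly with constant $1\le 2$.

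\emph{Step 3 (the constant).} With $q=p'\ge1$, the inequality $(2/q)^{1/q}\le 2$ is equivalent to $2\le q\,2^q$, which holds for all $q\ge1$, with equality at $q=1$. Hence the prefactor is at most $2\,\alpha^{\frac{p-1}{p}}$. This matches the way the lemma is used in the proof of Theorem \ref{teo:striscia}, and the estimate $\|u\,\eta\|_{W^{1,p}}\le\|u\,\eta\|_{L^p}+\|\nabla(u\,\eta)\|_{L^p}$ in the proof of Proposition \ref{prop:1}. Both are consistent with reading $\|\varphi\|_{W^{1,p}(\mathcal{S}_\alpha)}$ as $\|\varphi\|_{L^p}+\|\nabla\varphi\|_{L^p}$, which is what Step 2 produces.

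The one delicate point is this norm convention. If $\|\cdot\|_{W^{1,p}}$ were instead the $\ell^p$ combination $\big(\|\varphi\|_{L^p}^p+\|\nabla\varphi\|_{L^p}^p\big)^{1/p}$, converting the sum would cost an extra $2^{(p-1)/p}$. I would then try to recover the constant by inserting a parameter $c$ into the weight, $e^{-c(t-T)}$, and optimizing over $c$. That optimization is where I expect the main obstacle, especially for $p$ large, where $p'$ is close to $1$. The proof otherwise runs exactly as in Steps 1--3.
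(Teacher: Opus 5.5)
Under the reading $\|\varphi\|_{W^{1,p}(\mathcal{S}_\alpha)}=\|\varphi\|_{L^p(\mathcal{S}_\alpha)}+\|\nabla\varphi\|_{L^p(\mathcal{S}_\alpha)}$, your Steps 1--3 are correct, and this is essentially the paper's argument. The paper also applies the fundamental theorem of calculus in $x_2$ on each vertical line. The only difference is that it averages over the unit window $(-\alpha,\alpha)\times(T,T+1)$, i.e.\ your weight $e^{-(t-T)}$ is replaced by $1_{(T,T+1)}(t)$. H\"older then gives $(2\alpha)^{(p-1)/p}$ instead of your slightly better $(2\alpha/p')^{(p-1)/p}$.

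The convention issue you flag at the end is real, and the branch you leave open cannot be closed.

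\emph{Which convention the paper intends.} The paper means $\big(\|\varphi\|_{L^p}^p+\|\nabla\varphi\|_{L^p}^p\big)^{1/p}$. Evidence: the $W^{1,2}$ norm in the Introduction, the norm of $W^{1,p}_0(\Omega;|x|)$ in Lemma \ref{lm:2}, and its own last step, which merges the two norms into $(a+b)^{1/p}$. The uses you cite are valid under either convention, so they do not settle it. That last step relies on $a^{1/p}+b^{1/p}\le 2^{1/p}(a+b)^{1/p}$, which is false for $p>2$. The sharp factor is $2^{(p-1)/p}$, and with it the argument only gives $4^{(p-1)/p}\,\alpha^{(p-1)/p}$.

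\emph{The stated constant fails for large $p$.} In the $\ell^p$ convention no choice of the rate $c$ can recover $2\,\alpha^{(p-1)/p}$, because the inequality itself fails. Take $T=0$ and $\varphi(x_1,x_2)=\chi(x_1)\,g(x_2)$, with:
\begin{itemize}
\item $\chi\in C^\infty_0((-\alpha,\alpha))$, $0\le\chi\le 1$, $|\chi'|\le 4$, and $\chi\equiv1$ outside two boundary layers of width $1$;
\item $g\in C^\infty_0(\mathbb{R})$.
\end{itemize}
As $\alpha\to\infty$, the left-hand side equals $2\alpha\,|g(0)|+O(1)$, while $\|\varphi\|_{L^p}^p+\|\nabla\varphi\|_{L^p}^p=2\alpha\big(\|g\|_{L^p}^p+\|g'\|_{L^p}^p\big)+O(1)$. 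Hence the ratio of left-hand side to right-hand side tends to $\big(|g(0)|^p/(2\,\|g\|_{L^p}^p+2\,\|g'\|_{L^p}^p)\big)^{1/p}$. Now let $g$ approach $e^{-c|t|}$ with $c=(p-1)^{-1/p}$, for which $\|g\|_{L^p}^p+\|g'\|_{L^p}^p=2\,(p-1)^{-(p-1)/p}$. The limit ratio becomes $\big((p-1)^{(p-1)/p}/4\big)^{1/p}$, which exceeds $1$ for every $p\ge 7$.

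\emph{What your optimization would give.} Average your Step 1 over both half-lines with the weight $e^{-c|t-T|}$, use the pointwise bound $c\,a+b\le(1+c^{p'})^{1/p'}(a^p+b^p)^{1/p}$, and optimize in $c$. This yields exactly the constant $2^{1-2/p}(p-1)^{(p-1)/p^2}\alpha^{(p-1)/p}$. So, in the $\ell^p$ convention and for all $\alpha>0$, the stated bound holds if and only if $(p-1)^{(p-1)/p}\le 4$.

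Nothing downstream is affected, since Theorem \ref{teo:striscia} only needs some bound independent of $n$. So fix the sum convention explicitly, or enlarge the constant to $4\,\alpha^{(p-1)/p}$, rather than attempting the optimization.
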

\begin{proof}
For every $T\in\mathbb{R}$ and every $(x_1,x_2)\in(-\alpha,\alpha)\times(T,T+1)$ we have
\[
\begin{split}
|\varphi(x_1,T)|&\le |\varphi(x_1,x_2)-\varphi(x_1,T)|+|\varphi(x_1,x_2)|\\
&\le \int_T^{x_2} \left|\frac{\partial \varphi}{\partial x_2}(x_1,t)\right|\,dt+|\varphi(x_1,x_2)|\\
&\le \int_T^{T+1} \left|\frac{\partial \varphi}{\partial x_2}(x_1,t)\right|\,dt+|\varphi(x_1,x_2)|.
\end{split}
\]
We now integrate with respect to both $x_1\in(-\alpha,\alpha)$ and $x_2\in(T,T+1)$. This yields
\[
\begin{split}
\int_{-\alpha}^\alpha|\varphi(x_1,T)|\,dx_1&\le \iint_{(-\alpha,\alpha)\times (T,T+1)} \left|\frac{\partial \varphi}{\partial x_2}(x_1,t)\right|\,dx_1\,dt\\
&+\iint_{(-\alpha,\alpha)\times (T,T+1)}|\varphi(x_1,x_2)|\,dx_1\,dx_2.
\end{split}
\]
We can now use H\"older's inequality in the right-hand side, so to obtain
\[
\begin{split}
\int_{-\alpha}^\alpha|\varphi(x_1,T)|\,dx_1&\le  (2\,\alpha)^\frac{p-1}{p}\,\left(\iint_{(-\alpha,\alpha)\times (T,T+1)} \left|\frac{\partial \varphi}{\partial x_2}(x_1,t)\right|^p\,dx_1\,dt\right)^\frac{1}{p}\\
&+(2\,\alpha)^\frac{p-1}{p}\,\left(\iint_{(-\alpha,\alpha)\times (T,T+1)}|\varphi(x_1,x_2)|^p\,dx_1\,dx_2\right)^\frac{1}{p}.
\end{split}
\]
Thus, we obtain the desired estimate, by using that $a^{1/p}+b^{1/p}\le 2^{1/p}\,(a+b)^{1/p}$.
\end{proof}

\appendix

\section{A strong vanishing property}
\label{sec:A}
In the proof of Lemma \ref{lm:2}, we crucially exploited the following result. The proof is elementary, though some care is needed.
\begin{prop}
\label{prop:salvaculo}
Let $\{u_n\}_{n\ge 1}$ be the sequence constructed in the proof of Lemma \ref{lm:2}. Then we have
\[
\lim_{n\to\infty} \int_\Omega V_n^{k-1}\,|\nabla u_n|^p\,dx=0=\lim_{n\to\infty} \int_\Omega V_n^k\,|u_n|^p\,dx,\qquad \text{for every}\ k\in\mathbb{N}\setminus\{0\}.
\]
\end{prop}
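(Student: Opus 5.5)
The plan is an induction on $k$, driven by testing the Euler--Lagrange equation \eqref{EL} with weighted powers of $u_n$ itself. One point about the indexing has to be settled first. For $k=1$ the gradient claim reads $\int_\Omega|\nabla u_n|^p\,dx\to0$, and \emph{Step 4} of Lemma \ref{lm:2} gives \eqref{PS2}, i.e. $\int_\Omega|\nabla u_n|^p\,dx\to\mathcal{E}_p(\Omega)$. So for $k=1$ that claim holds only when $\mathcal{E}_p(\Omega)=0$. The argument below will prove: the potential claim for every $k\ge1$, the gradient claim for every $k\ge2$, and, for $k=1$, the bound $\int_\Omega|\nabla u_n|^p\,dx\le C$ from \eqref{uniforme}. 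This is all that is actually used in \emph{Step 5} of Lemma \ref{lm:2}, namely \eqref{ind3}.

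The base case is the potential claim for $k=1$, i.e. \eqref{panino}. It follows from \eqref{limen2}, \eqref{EI} and \eqref{PS2}, since $\int_\Omega V_n\,|u_n|^p\,dx=\lambda_n-\int_\Omega|\nabla u_n|^p\,dx\to0$.

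The key computation is to insert $\varphi=V_n^k\,u_n$ in \eqref{EL}. Since $V_n$ grows like $\varepsilon_n|x|$, this test function need not lie in $W^{1,p}_0(\Omega;|x|)$. I will therefore first test with $\min\{V_n,M\}^k\,u_n$ and then let $M\to\infty$ by monotone convergence; the truncation only makes the gradient terms smaller. Formally, this yields the identity
\[
\int_\Omega V_n^k|\nabla u_n|^p\,dx+k\int_\Omega V_n^{k-1}|\nabla u_n|^{p-2}\langle\nabla u_n,\nabla V_n\rangle\,u_n\,dx+\int_\Omega V_n^{k+1}|u_n|^p\,dx=\lambda_n\int_\Omega V_n^k|u_n|^p\,dx .
\]
By construction $|\nabla V_n|\le \sqrt{n}\,|\nabla\zeta_n|+\varepsilon_n\le 2/\sqrt{n}+1/R_n^2=:\delta_n\to0$. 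The mixed term is then controlled by Young's inequality:
\[
k\,\delta_n\int_\Omega V_n^{k-1}|\nabla u_n|^{p-1}|u_n|\,dx\le k\,\delta_n\left(\int_\Omega V_n^{k-1}|\nabla u_n|^p\,dx+\int_\Omega V_n^{k-1}|u_n|^p\,dx\right).
\]
Combining the two displays gives the recursive estimate
\[
\int_\Omega V_n^k|\nabla u_n|^p\,dx+\int_\Omega V_n^{k+1}|u_n|^p\,dx\le \lambda_n\int_\Omega V_n^k|u_n|^p\,dx+k\,\delta_n\left(\int_\Omega V_n^{k-1}|\nabla u_n|^p\,dx+\int_\Omega V_n^{k-1}|u_n|^p\,dx\right).
\]

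The induction runs as follows. Assume $\int_\Omega V_n^k|u_n|^p\,dx\to0$, that $\int_\Omega V_n^{k-1}|u_n|^p\,dx$ is bounded, and that $\int_\Omega V_n^{k-1}|\nabla u_n|^p\,dx$ is bounded. For $k=1$ these hold by \eqref{panino}, \eqref{PS1} and \eqref{uniforme}. The right-hand side of the recursive estimate then tends to zero, because $\lambda_n$ is bounded and $\delta_n\to0$. Hence $\int_\Omega V_n^{k}|\nabla u_n|^p\,dx\to0$ and $\int_\Omega V_n^{k+1}|u_n|^p\,dx\to0$, which are exactly the hypotheses at step $k+1$. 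This proves the potential claim for all $k$ and the gradient claim for all exponents $k-1\ge1$.

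I expect the main obstacle to be the justification of the test function: showing that $\min\{V_n,M\}^k u_n$ is admissible in \eqref{EL} and that the limit $M\to\infty$ can be taken with no a priori finiteness of $\int_\Omega V_n^{k+1}|u_n|^p\,dx$. On the set $\{V_n>M\}$ the gradient of the truncation vanishes, and all the terms involved are nonnegative, so monotone convergence applies. This yields the recursive estimate first with truncated weights and then in the limit.
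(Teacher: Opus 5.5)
Your proof is correct and follows the paper's argument. Both proceed by induction on $k$, testing \eqref{EL} with $V_n^k u_n$ and controlling the mixed term via $|\nabla V_n|\le 2/\sqrt{n}+1/R_n^2$ and Young's inequality; the only difference is that you make the test function admissible by truncating the weight, $\min\{V_n,M\}^k u_n$, instead of multiplying by the spatial cut-off $\zeta_R^p$ and letting $R\to+\infty$, which spares you the $\nabla\zeta_R$ term and the $B_R/B_{2R}$ bookkeeping. Your remark on the indexing is also right: since $\int_\Omega|\nabla u_n|^p\,dx\to\mathcal{E}_p(\Omega)$, the gradient claim can only hold for $k\ge 2$, and the paper's own induction likewise uses only the boundedness of $\mathcal{J}_{n,1}$ from \eqref{uniforme} and proves exactly what you prove.
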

\begin{proof}
The proof will use an induction argument and it is a slight variation of the argument used in \cite[Proposition 5.4]{BraBriPri_periodic}, containing a simpler potential term $V_n$. Despite the close similarity, we prefer to give all the details, in order to assist the reader. This will also clarify the role of the parameters used to define the potential $V_n$ and how to handle the additional difficulties coming from the presence of the term $\zeta_n$ in the definition of $V_n$.
\par
We take $R\ge 1$ and  $k\in\mathbb N$. We then use
\[
\varphi = V_n^k\,u_n\,\zeta_R^p,
\]
as a test function in \eqref{EL}. We are still using the notation $\zeta_R$ for the cut-off function as in {\it Step 1} of the proof of Lemma \ref{lm:2}, i.e.
 $\zeta_R\in C_0^\infty(B_{2R})$, with 
\[
0\le\zeta\le1,\qquad \zeta_R=1\ \text{in}\ B_R,\qquad |\nabla \zeta_R|\le \frac{2}{R}.
\]
Observe that the previous test function is feasible, thanks to the compact support of $\zeta_R$, which ``hides'' the growth at infinity of the term $V_n^k$.
We then obtain
\begin{equation}
\label{1}
\begin{split}
\int_\Omega |\nabla u_n|^p\,V_n^k\,\zeta_R^p\,dx+\int_\Omega V_n^{k+1}\,|u_n|^p\,\zeta_R^p\,dx&=-p\,\int_\Omega \langle |\nabla u_n|^{p-2}\,\nabla u_n,\nabla \zeta_R\rangle\,\zeta_R^{p-1}\,u_n\,V_n^k\,dx\\
&-k\,\int_\Omega \langle |\nabla u_n|^{p-2}\,\nabla u_n,\nabla V_n\rangle\,V_n^{k-1}\,u_n\,\zeta_R^p\,dx\\
&+\lambda_n\,\int_\Omega |u_n|^p\,V_n^k\,\zeta_R^p\,dx.
\end{split}
\end{equation}
We estimate the first term on the right-hand side by means of Cauchy-Schwarz and Young inequalities. This yields
\[
\begin{split}
-p\,\int_\Omega \langle |\nabla u_n|^{p-2}\,\nabla u_n,\nabla \zeta_R\rangle\,\zeta_R^{p-1}\,u_n\,V_n^k\,dx&\le \delta\,(p-1)\,\int_\Omega V_n^k\, |\nabla u_n|^p\,\zeta_R^{p}\,dx\\
&+\delta^{1-p}\,\int_\Omega V_n^k\,|u_n|^p\,|\nabla\zeta_R|^p\,dx,
\end{split}
\]
for every $\delta>0$. Thus, from \eqref{1} we get
\begin{equation}
\label{2}
\begin{split}
(1-(p-1)\,\delta)\,\int_\Omega |\nabla u_n|^p\,V_n^k\,\zeta_R^p\,dx&+\int_\Omega V_n^{k+1}\,|u_n|^p\,\zeta_R^p\,dx\\
&\le -k\,\int_\Omega \langle |\nabla u_n|^{p-2}\,\nabla u_n,\nabla V_n\rangle\,V_n^{k-1}\,u_n\,\zeta_R^p\,dx\\
&+ \lambda_n\,\int_\Omega V_n^k\,|u_n|^p\,\zeta_R^p\,dx\\
&+\delta^{1-p}\,\int_\Omega V_n^k\,|u_n|^p\,|\nabla\zeta_R|^p\,dx.
\end{split}
\end{equation}
We now estimate the term containing $\nabla V_n$. By taking into account that\footnote{Here we see the role of the factor $\sqrt{n}$ in front of the term $\zeta_n$, in the definition of $V_n$. The term $\sqrt{n}\,\zeta_n$ is becoming larger and larger as $n$ grows to $\infty$, so to ``confine'' the optimizer $u_n$ ``towards infinity''. At the same time, the gradient of the term $\sqrt{n}\,\zeta_n$ is harmless.}
\[
|\nabla V_n|=\left|\sqrt{n}\,\nabla \zeta_n+\frac{1}{R_n^2}\,\frac{x}{|x|}\right|\le \sqrt{n}\,|\nabla\zeta_n|+\frac{1}{R_n^2}\le \frac{2}{\sqrt{n}}+\frac{1}{R_n^2},
\] 
we have
\[
\begin{split}
-k\,\int_\Omega \langle |\nabla u_n|^{p-2}\,\nabla u_n,\nabla V_n\rangle\,V_n^{k-1}\,u_n\,\zeta_R^p\,dx&\le k\,\left(\frac{2}{\sqrt{n}}+\frac{1}{R_n^2}\right)\,\int_\Omega |\nabla u_n|^{p-1}\,|u_n|\,V_n^{k-1}\,\zeta_R^p\,dx\\
&\le k\,\left(\frac{2}{\sqrt{n}}+\frac{1}{R_n^2}\right)\,\frac{p-1}{p}\,\int_\Omega |\nabla u_n|^p\,V_n^{k-1}\,\zeta_R^p\,dx\\
&+k\,\left(\frac{2}{\sqrt{n}}+\frac{1}{R_n^2}\right)\,\frac{1}{p}\,\int_\Omega |u_n|^p\,V_n^{k-1}\,\zeta_R^p\,dx.
\end{split}
\]
We use this estimate in \eqref{2}, so to obtain
\[
\begin{split}
(1-(p-1)\,\delta)\,\int_\Omega |\nabla u_n|^p\,V_n^k\,\zeta_R^p\,dx&+\int_\Omega V_n^{k+1}\,|u_n|^p\,\zeta_R^p\,dx\\
&\le k\,\left(\frac{2}{\sqrt{n}}+\frac{1}{R_n^2}\right)\,\frac{p-1}{p}\,\int_\Omega |\nabla u_n|^p\,V_n^{k-1}\,\zeta_R^p\,dx\\
&+k\,\left(\frac{2}{\sqrt{n}}+\frac{1}{R_n^2}\right)\,\frac{1}{p}\,\int_\Omega |u_n|^p\,V_n^{k-1}\,\zeta_R^p\,dx\\
&+ \lambda_n\,\int_\Omega V_n^k\,|u_n|^p\,\zeta_R^p\,dx\\
&+\delta^{1-p}\,\int_\Omega V_n^k\,|u_n|^p\,|\nabla\zeta_R|^p\,dx.\end{split}
\]
This is valid for every $\delta>0$: we can take $\delta=1/(2\,(p-1))$ and use the properties of $\zeta_R$, so to obtain
\[
\begin{split}
\frac{1}{2}\,\int_{\Omega\cap B_R} |\nabla u_n|^p\,V_n^k\,dx+\int_{\Omega\cap B_R} V_n^{k+1}\,|u_n|^p\,dx&\le k\,\left(\frac{2}{\sqrt{n}}+\frac{1}{R_n^2}\right)\,\frac{p-1}{p}\,\int_{\Omega\cap B_{2R}} |\nabla u_n|^p\,V_n^{k-1}\,dx\\
&+k\,\left(\frac{2}{\sqrt{n}}+\frac{1}{R_n^2}\right)\,\frac{1}{p}\,\int_{\Omega\cap B_{2R}} |u_n|^p\,V_n^{k-1}\,dx\\
&+\lambda_n\,\int_{\Omega\cap B_{2R}} V_n^k\,|u_n|^p\,dx\\
&+\frac{2^p}{R^p}\,(2\,(p-1))^{p-1}\,\int_{\Omega\cap B_{2R}} V_n^k\,|u_n|^p\,dx.
\end{split}
\]
On the right-hand side, we can also use the uniform bound \eqref{uniforme} on $\lambda_n$ and the fact that
\[
V_n^{k-1}\le 1+V_n^{k}.
\]
With simple manipulations, we can thus obtain (recall that $R\ge 1$)
\[
\begin{split}
\int_{\Omega\cap B_R} |\nabla u_n|^p\,V_n^k\,dx+\int_{\Omega\cap B_R} V_n^{k+1}\,|u_n|^p\,dx&\le C\,k\,\left(\frac{1}{\sqrt{n}}+\frac{1}{R_n^2}\right)\,\int_{\Omega\cap B_{2R}} |\nabla u_n|^p\,V_n^{k-1}\,dx\\
&+C\,\left[k\,\left(\frac{1}{\sqrt{n}}+\frac{1}{R_n^2}\right)+1\right]\,\int_{\Omega\cap B_{2R}} |u_n|^p\,V_n^{k}\,dx\\
&+C\,k\,\left(\frac{1}{\sqrt{n}}+\frac{1}{R_n^2}\right)\,\int_{\Omega\cap B_{2R}} |u_n|^p\,dx\\
\end{split}
\]
for a constant $C=C(N,p,\Omega)>0$. Finally, we can simply estimate the last $L^p$ norm by recalling that each $u_n$ has unit $L^p$ norm.
We now introduce the following compact notation
\[
\mathcal{I}_{n,k}(R)=\int_{\Omega\cap B_R} V_n^k\,|u_n|^p\,dx\quad \text{and}\quad \mathcal{J}_{n,k}(R)=\int_{\Omega\cap B_R} |\nabla u_n|^p\,V_n^{k-1}\,dx,\qquad n\in\mathbb{N},\ k\in\mathbb{N}\setminus\{0\}.
\]
Thus, we can reformulate the previous estimate as follows
\begin{equation}
\label{daghelo}
\begin{split}
\mathcal{I}_{n,k+1}(R)+\mathcal{J}_{n,k+1}(R)&\le C\,k\,\left(\frac{1}{\sqrt{n}}+\frac{1}{R_n^2}\right)\,\mathcal{J}_{n,k}(2R)\\
&+C\,\left[k\,\left(\frac{1}{\sqrt{n}}+\frac{1}{R_n^2}\right)+1\right]\,\mathcal{I}_{n,k}(2R)+C\,k\,\left(\frac{1}{\sqrt{n}}+\frac{1}{R_n^2}\right).
\end{split}
\end{equation}
We can now proceed by induction on $k\in\mathbb{N}\setminus\{0\}$ and obtain that
\[
\mathcal{I}_{n,k}:=\int_\Omega V_n^{k}\,|u_n|^p\,dx<+\infty\qquad \text{and}\qquad \mathcal{J}_{n,k}:=\int_\Omega V_n^{k-1}\, |\nabla u_n|^p\,dx<+\infty.
\]
Indeed, this is true for $k=1$ by construction. On the other hand, if we suppose that both $\mathcal{I}_{n,k}$ and $\mathcal{J}_{n,k}$ are finite for a certain $k\ge 1$, by taking the limit as $R$ goes to $+\infty$ in \eqref{daghelo} and relying on the Monotone Convergence Theorem, we get  
\begin{equation}
\label{sprot}
\begin{split}
\mathcal{I}_{n,k+1}+\mathcal{J}_{n,k+1}&\le C\,k\,\left(\frac{1}{\sqrt{n}}+\frac{1}{R_n^2}\right)\,\mathcal{J}_{n,k}\\
&+C\,\left[k\,\left(\frac{1}{\sqrt{n}}+\frac{1}{R_n^2}\right)+1\right]\,\mathcal{I}_{n,k}+C\,k\,\left(\frac{1}{\sqrt{n}}+\frac{1}{R_n^2}\right).
\end{split}
\end{equation}
The desired conclusion can now be obtained from \eqref{sprot}, by using again an induction argument over $k$. Indeed, observe that by \eqref{panino} we have
\[
\lim_{n\to\infty}\mathcal{I}_{n,1}=\lim_{n\to\infty} \int_\Omega V_n\,|u_n|^p\,dx=0,
\]
while obviously
\[
\begin{split}
\lim_{n\to\infty} \left[C\,k\,\left(\frac{1}{\sqrt{n}}+\frac{1}{R_n^2}\right)\,\mathcal{J}_{n,1}\right.&\left.+C\,k\,\left(\frac{2}{\sqrt{n}}+\frac{1}{R_n^2}\right)\right]\\
&=\lim_{n\to\infty} C\,k\,\left(\frac{1}{\sqrt{n}}+\frac{1}{R_n^2}\right)\,\int_\Omega |\nabla u_n|^p\,dx=0,
\end{split}
\]
thanks to \eqref{uniforme} and the fact that $\{R_n\}_{n\in\mathbb{N}}$ diverges to $+\infty$. This concludes the proof.
\end{proof}


\medskip

\begin{thebibliography}{100}

\bibitem{An} A. Anane, Simplicit\'e et isolation de la premi\`ere valeur propre du $p-$laplacien avec poids, C. R. Acad. Sci. Paris S\'er. I Math., {\bf 305} (1987), 725--728.

\bibitem{AmTo} C. J. Amick, J. F. Toland, Nonlinear elliptic eigenvalue problems on an infinite strip -- global theory of bifurcation and asymptotic bifurcation, Math. Ann., {\bf 262} (1983), 313--342.

\bibitem{BiaBraOgn} F. Bianchi, L. Brasco, R. Ognibene, On the spectrum of sets made of cores and tubes, J. Convex Anal., {\bf 31} 2024, 315--358.

\bibitem{BS} M. Sh. Birman, M. Z. Solomjak, {\it Spectral theory of selfadjoint operators in Hilbert space.}
Translated from the 1980 Russian original by S. Khrushchev and V. Peller. Mathematics and its Applications (Soviet Series). D. Reidel Publishing Co., Dordrecht, 1987.

\bibitem{Bob} V. Bobkov, Non-Ljusternik--Schnirelman eigenvalues of the pure $p-$Laplacian exist, preprint (2026), {\tt https://arxiv.org/abs/2604.01138}

\bibitem{BBT} J. L. Bona, D. K. Bose, R. E. L. Turner, Finite-amplitude steady waves in stratified fluids,
J. Math. Pures Appl. (9), {\bf 62} (1983), 389--439.

\bibitem{BraBook} L. Brasco,  {\it Handbook of Calculus of Variations for absolute beginners.} Unitext, {\bf 163}, La Mat. per il 3+2, Springer, Cham, 2025.

\bibitem{Bra_bus}  L. Brasco, On principal frequencies and isoperimetric ratios in convex sets, Ann. Fac. Sci. Toulouse Math. (6), {\bf 29} (2020), 977--1005. 

\bibitem{BraBriPri_low} L. Brasco, L. Briani, F. Prinari, Low eigenvalues of the $p-$Laplacian in general open sets, preprint (2026), {\tt https://arxiv.org/abs/2602.21118}

\bibitem{BraBriPri_periodic} L. Brasco, L. Briani, F. Prinari, Extremals for sharp Poincar\'e-Sobolev inequalities: periodically perforated sets and beyond, preprint (2025), {\tt https://arxiv.org/abs/2511.20260}

\bibitem{BraBriPri_steiner} L. Brasco, L. Briani, F. Prinari, Extremals for Poincar\'e-Sobolev sharp constants in Steiner symmetric sets, Nonlinear Anal., {\bf 269} (2026), Paper No. 114098.

\bibitem{CDeFG} M. Cuesta, D. Figueiredo, J.-P. Gossez, The beginning of the Fu\v{c}ik spectrum for the $p-$Laplacian, J. Differential Equations, {\bf 159} (1999), 212--238.

\bibitem{DiB} E. DiBenedetto, $C^{1+\alpha}$ local regularity of weak solutions of degenerate elliptic equations, Nonlinear Anal., {\bf 7} (1983), 827--850.


\bibitem{EL} M. J. Esteban, P.-L. Lions, Existence and nonexistence results for semilinear elliptic problems in unbounded domains,
Proc. Roy. Soc. Edinburgh Sect. A, {\bf 93} (1982/83), 1--14.

\bibitem{Fra} G. Franzina, {\it Existence, Uniqueness, Optimization and Stability for low Eigenvalues of some Nonlinear Operators}, PhD Thesis (2012), available at {\tt https://cvgmt.sns.it/paper/2102/}

\bibitem{Fri} L. Friedlander, Asymptotic behavior of the eigenvalues of the $p-$Laplacian, Comm. Partial Differential Equations, {\bf 14} (1989), 1059--1069.

\bibitem{GP} J. P. Garc\'ia Azorero, I. Peral Alonso, Existence and nonuniqueness for the $p-$Laplacian: nonlinear eigenvalues, Comm. Partial Differential Equations, {\bf 12} (1987), 1389--1430.

\bibitem{GV} M. Guedda, L. V\'eron, Quasilinear elliptic equations involving critical Sobolev exponents, Nonlinear Anal., {\bf 13} (1989), 879--902.

\bibitem{HS} P. D. Hislop, I. M. Sigal, {\it Introduction to spectral theory. With applications to Schr\"odinger operators}, Appl. Math. Sci., {\bf 113}. Springer-Verlag, New York, 1996

\bibitem{Ka}
O.\ Kavian, Introduction \`a la th\'eorie des points critiques et applications aux probl\`emes elliptiques. (French) [Introduction to critical point theory and applications to elliptic problems] Mathématiques \& Applications (Berlin) [Mathematics \& Applications], {\bf 13}. Springer-Verlag, Paris, 1993.

\bibitem{KT} H. Koch, D. Tataru, Carleman estimates and absence of embedded eigenvalues, Comm. Math. Phys., {\bf 267} (2006), 419--449.

\bibitem{Li} E. H. Lieb, On the lowest eigenvalue of the Laplacian for the intersection of two domains,
Invent. Math., {\bf 74} (1983), 441--448.

\bibitem{Le} A. L\^{e},  Eigenvalue problems for the $p-$Laplacian, Nonlinear Anal., {\bf 64} (2006), 1057--1099.

\bibitem{Lin} P. Lindqvist, A nonlinear eigenvalue problem, in {\it Topics in mathematical analysis}, 175--203. Ser. Anal. Appl. Comput., {\bf 3}. World Scientific Publishing Co. Pte. Ltd., Hackensack, NJ, 2008.

\bibitem{Maz} V. Maz'ya, {\it Sobolev spaces with applications to elliptic partial differential equations}. Second, revised and augmented edition. Grundlehren der Mathematischen Wissenschaften [Fundamental Principles of Mathematical Sciences], {\bf 342}. Springer, Heidelberg, 2011. 

\bibitem{Pe} A. Persson, Bounds for the discrete part of the spectrum of a semi-bounded Schr\"odinger operator, Math. Scand., {\bf 8} (1960), 143--153.

\bibitem{Re} F. Rellich, \"Uber das asymptotische Verhalten der L\"osungen von $\Delta u+\lambda u=0$ in unendlichen Gebieten, Jber. Deutsch. Math.-Verein., {\bf 53} (1943), 57--65.

\bibitem{Ro} S. N. Roze, The spectrum of a second order elliptic operator, Mat. Sb., {\bf 80 (122)} (1969), 195--209.


\bibitem{St} M. Struwe, {\it Variational methods. Applications to nonlinear partial differential equations and Hamiltonian systems. Fourth edition}. Ergebnisse der Mathematik und ihrer Grenzgebiete. 3. Folge. A Series of Modern Surveys in Mathematics, {\bf 34}. Springer-Verlag, Berlin, 2008.

\bibitem{Szu} A. Szulkin, Ljusternik-Schnirelmann theory on $C^1$-manifolds, Ann. Inst. H. Poincar\'e Anal. Non Lin\'eaire, {\bf 5} (1988), 119--139.

\bibitem{Te} G. Teschl, {\it Mathematical methods in quantum mechanics. With applications to Schr\"odinger operators.} Second edition. Graduate Studies in Mathematics, {\bf 157}. American Mathematical Society, Providence, RI, 2014.

\bibitem{Wi} K. J. Witsch, Examples of embedded eigenvalues for the Dirichlet-Laplacian in domains with infinite boundaries, Math. Methods Appl. Sci., {\bf 12} (1990), 177--182.

\end{thebibliography}
\end{document}